\newtheorem{theorem}{Theorem}[section]
\newtheorem{lemma}[theorem]{Lemma}
\theoremstyle{definition}
\theoremstyle{remark}
\newtheorem{remark}[theorem]{Remark}
\theoremstyle{corollary}
\newtheorem{corollary}[theorem]{Corollary}
\numberwithin{equation}{section}
\newcommand\mI{\mathcal{I}}
\newcommand\mT{\mathcal{T}}
\newcommand\mE{\mathcal{E}}
\newcommand\mM{\mathcal{M}}
\newcommand\mF{\mathcal{F}}
\newcommand\mR{\mathcal{R}}
\newcommand\Hgamma{H^1_{\Gamma_0}(\Omega)}
\newcommand\osc{\mathrm{osc}}
\newcommand\VT{\mathbb{V}_{\mT}}
\newcommand\ci{\textnormal{i}}
\newcommand\half{\frac{1}{2}}
\newcommand\halfa{\frac{\alpha}{2}}
\newcommand\tildeeta{\widetilde{\eta}}
\newcommand\tilderho{\widetilde{\rho}}
\newcommand\tildeM{\widetilde{\mathcal{M}}}
\newcommand\tildeR{\widetilde{R}}
\newcommand\tildeosc{\widetilde{\osc}}
\newcommand\condition{k^3 h_0^{1+\alpha}}
\newcommand\conditionT{k^3 h_\mT^{1+\alpha}}
\newcommand\V[1]{\mathbb{V}_{#1}}
\newcommand\T[1]{\mathcal{T}_{#1}}
\newcommand\M[1]{\mathcal{M}_{#1}}
\newcommand\energy[1]{\interleave #1 \interleave}
\newcommand\twonorm[1]{\|#1 \|}
\newcommand\intbound[1]{\langle #1 \rangle}
\newcommand\bdnorm[1]{\| #1 \|_{\Gamma_1}}
\newcommand\tildeu[1]{\widetilde{u}_{#1}}
\newcommand\jump[1]{\llbracket #1 \rrbracket}
\newcommand\hone[1]{\twonorm{#1}_1}
\newcommand\htwo[1]{\twonorm{#1}_2}
\newcounter{mycnt}
\newcommand{\norm}[1]{\left\Vert#1\right\Vert}
\newcommand{\abs}[1]{\left\vert#1\right\vert}
\newcommand{\pd}[1]{\left\langle #1\right\rangle}
\newcommand{\set}[1]{\left\{#1\right\}}
\newcommand{\db}{\displaybreak[0]}
\newcommand{\ls}{\lesssim}
\newcommand{\al}{\alpha}
\newcommand{\de}{\delta}
\newcommand{\De}{\Delta}
\newcommand{\ga}{\gamma}
\newcommand{\Ga}{\Gamma}
\newcommand{\La}{\Lambda}
\newcommand{\na}{\nabla}
\newcommand{\om}{\omega}
\newcommand{\Om}{\Omega}
\newcommand{\pa}{\partial}
\newcommand{\ta}{\theta}
\newcommand{\Llb}{\La_{\rm lb}}
\newcommand{\Lloc}{\La_{\rm loc}}
\newcommand{\tLub}{\tilde\La_{\rm ub}}
\newcommand{\tLlb}{\tilde\La_{\rm lb}}
\newcommand{\Lcvg}{\La_{\rm cvg}}
\newcommand{\Clb}{C_{\rm lb}}
\newcommand{\Ccvg}{C_{\rm cvg}}
\newcommand{\Copt}{C_{\rm opt}}
\newcommand{\Cmark}{C_{\rm mark}}
\DeclareMathOperator{\re}{{Re}}
\DeclareMathOperator{\im}{{Im}}
\newcommand{\eq}[1]{\begin{align}#1\end{align}}
\newcommand{\eqn}[1]{\begin{align*}#1\end{align*}}
\newcommand{\qtq}[1]{\quad\text{#1}\quad}
\begin{document}

\title[AFEM for Helmholtz Equation]{Adaptive FEM for  Helmholtz Equation\\ with Large Wave Number}
\markboth{ S. Duan and H. Wu}{AFEM for Helmholtz Equation}

\author[S. Duan]{Songyao Duan}
\address{Department of Mathematics, Nanjing University, Jiangsu, 210093, P.R. China. }
\curraddr{}
\email{dsy@smail.nju.edu.cn}
\thanks{This work was partially supported by the NSF of China under grant 11525103.}

\author[H. Wu]{Haijun Wu}
\address{Department of Mathematics, Nanjing University, Jiangsu, 210093, P.R. China. }
\curraddr{}
\email{hjw@nju.edu.cn}
\thanks{}

\subjclass[2010]{
65N12, 
65N15, 
65N30, 
78A40  
}

\date{}

\dedicatory{}

\keywords{Adaptive FEM, convergence, quasi-optimality, Helmholtz equation with large wave number}

\begin{abstract}
A posteriori upper and lower bounds are derived for the linear finite element method (FEM) for the Helmholtz equation with large wave number. It is proved rigorously that the standard residual type error estimator seriously underestimates the true error of the FE solution for the mesh size $h$ in the preasymptotic regime, which is first observed by Babu\v{s}ka,~et~al. for an one dimensional problem. By establishing an equivalence relationship between the error estimators for the FE solution and the corresponding elliptic projection of the exact solution, an adaptive algorithm is proposed and its convergence and quasi-optimality are proved under  condition that $k^3h_0^{1+\al}$ is sufficiently small, where $h_0$ is the initial mesh size and $\frac12<\al\le 1$ is a regularity constant depending on the maximum reentrant angle of the domain. Numerical tests are given to verify the theoretical findings and to show that the adaptive continuous interior penalty finite element method (CIP-FEM) with appropriately selected penalty parameters can greatly reduce the pollution error and hence the residual type error estimator for this CIP-FEM is reliable and efficient even in the preasymptotic regime.
\end{abstract}

\maketitle

\section{Introduction}
Let $\Omega_0 \subset\Omega_1\subset \mathbb{R}^2$ be two  convex polygons, we consider the following Helmholtz equation in the domain $\Omega:= \Omega_1 \backslash \bar \Omega_0$ with impedance boundary condition on $\Gamma_1:=\partial \Omega_1$ and homogeneous Dirichlet boundary condition on $\Gamma_0:=\partial \Omega_0$:
\begin{equation}\label{eq:helm}
\begin{cases}
-\De u - k^2 u=f  \quad &\text{in}\   \Omega, \\
\;\;\;  \dfrac{\partial u}{\partial n} - \ci k u= g \quad &\text{on} \  \Gamma_1,\\
\,\ \ \phantom{----} u=0 \quad &\text{on} \ \Gamma_0,
\end{cases}
\end{equation}
where  $\ci=\sqrt{-1}$ denotes the imaginary unit and $n$
denotes the unit outward normal
to $\pa\Om$. The above Helmholtz problem can be used for modeling the
acoustic scattering problem (with time dependence $e^{\ci\om t}$) and $k$ is known as the wave number. The impedance boundary condition is known as the
lowest order approximation of the Sommerfeld radiation condition and the homogeneous Dirichlet boundary condition is known as the sound soft boundary condition (cf. \cite{em79}).
\begin{figure}
\centering
\begin{tikzpicture}
\draw (-2,-2) -- (2,-2) -- (2,2) -- (-2,2) -- (-2, -2);
\node (gamma1) at (-1.7,-1.5){$\Gamma_1$} ;
\draw (-1,0) -- (-0.5,{-0.5*sqrt(3)}) -- (0.5, {-0.5*sqrt(3)}) -- (1,0) -- (-0.5, {0.5*sqrt(3)}) -- (-1,0);
\node (gamma0) at (0,  {-0.5*sqrt(3)+0.15} ) {$\Gamma_0$};
\end{tikzpicture}
\end{figure}

When applied to the Helmholtz problems with large wave number, due to the highly indefiniteness of the problems, the finite element method (FEM) usually possesses some special properties that are different from those when applied to the definite elliptic problems. For example, the constant in the C\'ea Lemma increases as the wave number $k$ increases, which is known as the pollution effect \cite{ib95a,ihlenburg98,bs00,Du2015Preasymptotic,wu2014,liwu2019,zw2013}. In particular, \cite{wu2014,liwu2019} proved preasymptotic a priori error estimate $\norm{u-u_h}_{H^1}=O(kh+k^3h^2)$ for the linear FEM for the Helmholtz equation on convex polyhedra domain under the condition that $k^3h^2$ (or $kh$ for 1D case) is sufficiently small, where $h$ is the mesh size. The first term $O(kh)$ is the interpolation error (or the error of the best approximation), which can be reduced to a given tolerance by putting enough but fixed number of points per wavelength, while the second term $O(k^3h^2)$ can not be reduced in the same way and is called the pollution error.   Another example is that the standard a posteriori error estimator of residual type seriously underestimates the true error of the finite element solution for the mesh size $h$ in the preasymptotic regime, which was observed by Babu\v{s}ka,~et~al.~\cite{Babu1997A,Babu1997B} for one dimensional (1D) problems but no rigorous proof was given there. This phenomenon was also observed in \cite{ib01}.

Ever since the pioneer work of Babu\v{s}ka and Rheinboldt \cite{br78e}, the adaptive finite element method (AFEM) based on the a posteriori error estimates has become a central scheme for numerical simulations of partial differential equations. For definite elliptic problems, the theory of AFEM  has been well-developed. We refer to \cite{dorfler,dataoscillation,convergenceAFEM,AFEMforGeneral,Stevenson,quasi-optimal} for results on a posteriori error estimates,  convergence, and quasi-optimality of the AFEM. While for the Helmholtz equation with large wave number, there are only relative few results in the literature on AFEM and those on convergence and quasi-optimality hold only for $h$ in the asymptotic regime, where the behaviour and the analysis are much like those of the AFEM for definite elliptic problems. D\"{o}rfler and Sauter~\cite{ds13} derived a residual-type a posteriori error estimate for the $hp$-FEM which is reliable and efficient.  Chaumont-Frelet, Ern, and Vohral\'{\i}k \cite{cev21} proposed a $p$-robust a posteriori estimate based on an equilibrated flux.   Let $\om_{\max}$ be the maximum reentrant angle of the domain $\Om$ and let $\al:=\frac{\pi}{\om_{\max}}\in (\frac12, 1)$.  Zhou and Zhu \cite{ACIPforhelmholtz} proved robustness of the residual-type a posteriori error estimator and the convergence of the AFEM (as well as the adaptive CIP-FEM) under the condition that $k^{1+s}h_0^s$ is sufficiently small, where $\frac12<s<\al$ and $h_0$ is the mesh size of the initial mesh.  Du, Wu, and Zhang \cite{dwz20} considered a recovery-type a posteriori error estimator on quasi-uniform meshes satisfying some approximate parallelogram condition  and proved that it does underestimate the error of the finite element solution. For a posteriori analyses of other methods, we refer to \cite{AIPDGforhelmholtz,sz15,kmw15}. We would like to mention the work of Chaumont-Frelet and Nicaise \cite{Highfrequency_behaviour}, in which  a preasymptotic a priori error estimate for the linear FEM for the Helmholtz equation \eqref{eq:helm} with $g=0$ was proved under the condition that $k^3h^{1+\al}$ is sufficiently small.
 To the best of the author's knowledge, the theory of  the AFEM based on the residual-type a posteriori error estimate in the preasymptotic regime is far from mature. For example, can we prove the observation of Babu\v{s}ka,~et~al.~\cite{Babu1997A}? Does the convergence  or quasi-optimality  hold if the initial mesh size is as big as in the preasymptotic regime?

The purpose of this paper is to study the properties of AFEM based on the residual-type a posteriori error estimate in the preasymptotic regime.  First, under the condition that $k^3h^{1+\al}$ is sufficiently small, it is proved that the error estimator of the FE solution is a robust estimate of the error the elliptic projection of the exact solution and hence seriously underestimates the error of the FE solution in the preasymptotic regime. Secondly, an AFEM is proposed and then its convergence and quasi-optimality are established under the initial mesh size condition that $k^3h_0^{1+\al}$ is sufficiently small. Finally, numerical examples are provided to verify the theoretical findings. The key idea of the analysis is to use the elliptic projection of the exact solution to establish a bridge to  the theory of AFEM for definite elliptic problems.

The rest of the paper is organized as follows. In \S~\ref{s1}, the regularity decomposition of the exact solution, the Scott-Zhang interpolation, the linear FEM, and the elliptic projection are introduced. The  a priori error estimates for the linear FEM and the elliptic projection are also provided. In \S~\ref{s2}, a posteriori upper and lower bounds are derived for the errors of the FE solution and the elliptic projection. An equivalent relationship between error estimators for the FE solution and the elliptic projection is established. Based on this equivalent relationship, an adaptive finite element algorithm is proposed, which is slightly different from the standard one, and its convergence is proved in \S~\ref{s3}. The quasi-optimality of the AFEM is proved in \S~\ref{s4}. Numerical tests are given in \S~\ref{s:num}.

For the simplicity of notation, we shall frequently use
$C$ for a generic positive constant in most of the subsequent estimates,
which is independent of the mesh size $h$, the source term $f$, and the exact solution $u$. We will also often write  $A\lesssim B$ and $B\gtrsim A$ for the inequalities $A\leq C B$ and $B\geq CA$ respectively. $A\eqsim B$ is used for an equivalent statement when both $A\lesssim B$ and $B\lesssim A$ hold.  All functions in this paper are complex-valued. The space, norm and inner product notation used in this paper are all standard. Their definitions can be found in \cite{bs08,ciarlet78}. In particular, $(\cdot,\cdot)_Q$ and $\pd{ \cdot,\cdot}_\Sigma$
for $\Sigma\subset \pa Q$ denote the complex $L^2$-inner product
on $L^2(Q)$ and $L^2(\Sigma)$
spaces, respectively.  For simplicity, denote by $(\cdot,\cdot):=(\cdot,\cdot)_\Om$, $\norm{\cdot}_j:=\norm{\cdot}_{H^j(\Om)}$, $\abs{\cdot}_j:=\abs{\cdot}_{H^j(\Om)}$,  $\|\cdot\|:=\|\cdot\|_0=\|\cdot\|_{L^2(\Om)}$,  $\twonorm{\cdot}_{\Gamma_1}:=\twonorm{\cdot}_{L^2(\Gamma_1)}$ and by $\twonorm{\cdot}_{\half, \Gamma_1}:=\twonorm{\cdot}_{H^\half (\Gamma_1)}$. Since we are considering high-frequency problems, we assume that $k\gg 1$. Further, let us assume that both $\Om_0$ and $\Om_1$ are star-shaped with respect to a point in $\Om_0$.

\section{FEM and elliptic projections}\label{s1}
In this section, we recall the  wave-number-explicit stability estimate and the regularity decomposition for the solution to the model problem \eqref{eq:helm} and introduce its finite element approximation and elliptic projection.

\subsection{Regularity decomposition}
Let $\Hgamma:=\{v\in H^1(\Omega): v|_{\Gamma_0}=0 \}$ and $a(\cdot , \cdot)$ be
 the sesquilinear form  on $\Hgamma \times \Hgamma$ defined by
\begin{align}
a(u,v):=&(\nabla u, \nabla v) - k^2 (u,v)-\ci k\intbound{u, v}_{\Gamma_1}. \label{form a}
\end{align}
The variational formulation of \eqref{eq:helm} reads as:
Find $u \in \Hgamma$, such that
\begin{equation}\label{eq:weakhelm}
 a(u,v)=(f,v) + \intbound{g, v}_{\Gamma_1} \quad \forall v \in \Hgamma.
\end{equation}

To state the regularity decomposition, we first introduce some notation.  We denote by $\chi_s (r)$  some  $C^\infty$ cutoff function that equals $1$ if $0\le r\le\frac{s}{3}$ and $0$ if $r\ge\frac{2s}{3}$. Let $\{x_j\}_{j=1}^{J}$ the set of corners on $\Gamma_0$.  To each corner $x_j$ we associate a local polar coordinates system $(r_j, \theta_j)$ centered at $x_j$ and with the polar axis taken along the edge before $x_j$ in counterclockwise order. We denote by $\omega_j$ the angle of the corner at $x_j$ and set $\alpha_j=\frac{\pi}{\omega_j}$, $\alpha=\min_j \alpha_j$. To characterize the singularities at corners, define $s_j(x):=\chi_L(r_j)r_j^{\alpha}\sin(\alpha_j \theta_j) $  around $x_j,$  where $L\eqsim 1$ is a constant such that the disk $B(x_j, L)$ centered at $x_j$ with radius $L$ does not contain other corners, $j=1, \cdots, J.$

The following lemma gives stability estimates \cite{Melenk:J:1995,cf06,stability_for_helm} of the problem \eqref{eq:weakhelm} and a decomposition of the continuous solution into a regular part and singular parts with explicit dependence on the wave number $k$ (cf. \cite{Highfrequency_behaviour}).
\begin{lemma}[Regularity Estimates]\label{thm:regularity_estimates}
For all $k \gtrsim 1$, $f \in L^2(\Omega)$ and $g\in H^\half(\Gamma_1)$, there exists a unique solution $u\in \Hgamma$ to problem \eqref{eq:weakhelm} satisfying the following stability estimates
\begin{equation}\label{eq:H estimates}
k \twonorm{u}+ \hone{u}\ls \twonorm{f}+ \twonorm{g}_{\Gamma_1} .
\end{equation}
Moreover, there exist a function $u_R \in \Hgamma \cap H^2(\Omega)$ and constants $c_k^j\in \mathbb{C}$, such that the following decomposition holds:
\begin{equation}\label{eq:splitting of helm}
 u=u_R+ \sum_{j=1}^{J} c_k^j s_j,
 \end{equation}
with the estimates
\eq{\label{eq:decomp}\htwo{u_R}  &\lesssim k M(f,g)\qtq{and}
 |c_k^j| \lesssim k^{\alpha_j -\half} (\twonorm{f}+\twonorm{g}_{\Gamma_1}), \  j=1,...,J,}
where $M(f,g)=\twonorm{f}+\twonorm{g}_{\Gamma_1} +\frac{1}{k}\twonorm{g}_{\half, \Gamma_1}.$
\end{lemma}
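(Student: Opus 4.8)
The plan is to follow the by-now classical strategy for wave-number-explicit regularity of Helmholtz problems: first establish the stability bound \eqref{eq:H estimates} via a Rellich/Morawetz multiplier argument that exploits the assumed star-shapedness of $\Om_0$ and $\Om_1$, and then build the decomposition \eqref{eq:splitting of helm} by a shift-theorem/corner-asymptotics argument applied to the elliptic problem $-\De u_R = f + k^2 u$, tracking all constants against powers of $k$. Since the cited references (\cite{Melenk:J:1995,cf06,stability_for_helm,Highfrequency_behaviour}) already contain the ingredients, I will mostly indicate how to assemble them rather than re-derive everything.

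First I would prove \eqref{eq:H estimates}. Existence and uniqueness follow from the Fredholm alternative once uniqueness is known: if $f=g=0$, taking $v=u$ in \eqref{eq:weakhelm} and looking at the imaginary part forces $u|_{\Gamma_1}=0$, and then $u$ solves a homogeneous Helmholtz equation with zero Cauchy data on $\Gamma_1$, so $u\equiv 0$ by unique continuation (using that $\Om$ is a bounded Lipschitz domain). For the quantitative bound, apply the standard multiplier $v = x\cdot\na u$ (or $(x-x_0)\cdot\na u$ with $x_0$ the star-center in $\Om_0$), integrate by parts over $\Om$, and use the geometric sign conditions $(x-x_0)\cdot n \ge 0$ on $\Gamma_1$ and $(x-x_0)\cdot n \le 0$ on $\Gamma_0$ coming from star-shapedness; combined with $v=u$ in \eqref{eq:weakhelm} and a Young's inequality absorbing the $\Gamma_1$ boundary terms (where one needs a trace estimate $\|u\|_{\Gamma_1}\ls \|u\|^{1/2}\|u\|_1^{1/2} + \|u\|$, or simply $k\|u\|_{\Gamma_1}^2$ controlled by $\im a(u,u)$), one arrives at $k^2\|u\|^2 + \|\na u\|^2 \ls \|f\|^2 + \|g\|_{\Gamma_1}^2$ with absolute constants. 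The Dirichlet condition on $\Gamma_0$ makes the $\Ga_0$ contribution favorable, so no extra hypothesis is needed there.

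Next, the decomposition. Rewrite the PDE as the Poisson problem $-\De u_R = F := f + k^2 u$ in $\Om$, with $\pa_n u_R - \ci k u_R = g + \ci k\sum_j c_k^j s_j - \sum_j c_k^j \pa_n s_j$ on $\Ga_1$ (note $s_j$ is supported near the corners of $\Ga_0$, hence vanishes near $\Ga_1$, so actually the boundary data on $\Ga_1$ is just $g$), and $u_R = -\sum_j c_k^j s_j$ on $\Ga_0$ (again $s_j$ vanishes near the other corners, so only the local behavior at $x_j$ matters). The corner singularity theory for the Laplacian on a polygon (Kondrat'ev / Grisvard) says that a solution of a Poisson problem with $L^2$ right-hand side decomposes as an $H^2$ part plus a linear combination of the singular functions $r_j^{\al_j}\sin(\al_j\ta_j)$ — the coefficients $c_k^j$ being continuous linear functionals of the data. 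The coefficient of $r_j^{\al_j}$ at a corner of interior angle $\om_j$ is picked out by a duality pairing against the "dual singular function" $r_j^{-\al_j}\sin(\al_j\ta_j)$, and estimating that pairing against $F=f+k^2u$ together with \eqref{eq:H estimates} gives $|c_k^j| \ls \|F\| \ls \|f\| + k^2\|u\| \ls \|f\| + k(\|f\|+\|g\|_{\Ga_1})$; the sharper exponent $k^{\al_j - 1/2}$ in \eqref{eq:decomp} must come from a more careful localized estimate (a scaling argument on a $k^{-1}$-ball around $x_j$, or the refined analysis in \cite{Highfrequency_behaviour}) rather than this crude bound. Once the $c_k^j$ are subtracted off, $u_R$ lies in $H^2$ with $\|u_R\|_2 \ls \|F\| + (\text{lower-order data}) \ls k M(f,g)$, where the $\frac1k\|g\|_{1/2,\Ga_1}$ term enters through the standard $H^{3/2}$ trace/elliptic-regularity estimate for the impedance datum on $\Ga_1$, rescaled by the $\ci k$ in the boundary condition.

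The main obstacle is obtaining the sharp $k$-dependence of the singular coefficients, $|c_k^j|\ls k^{\al_j - 1/2}(\|f\|+\|g\|_{\Ga_1})$, rather than the naive $O(k)$ that falls out of plugging \eqref{eq:H estimates} into a global duality bound. Getting the exponent $\al_j - \frac12$ requires a local analysis near each corner — localizing $u$ to $B(x_j,L)$, rescaling to the unit ball where the oscillation of the Helmholtz solution is controlled, and exploiting that the singular function $r_j^{\al_j}$ is "less oscillatory" than $u$ itself near the corner — essentially reproducing the corner-refined estimate of Chaumont-Frelet and Nicaise \cite{Highfrequency_behaviour}. The rest (the multiplier argument, the $H^2$ shift theorem) is routine but bookkeeping-intensive in the constants.
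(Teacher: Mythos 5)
Your stability argument (a Rellich--Morawetz multiplier combined with the imaginary part of $a(u,u)$) is fine and is essentially what the references quoted for \eqref{eq:H estimates} do; the paper does not reprove this part. The real content of the lemma is the decomposition with the sharp coefficient bound $|c_k^j|\lesssim k^{\alpha_j-\half}(\twonorm{f}+\twonorm{g}_{\Gamma_1})$, and here your proposal has a genuine gap: your Grisvard/dual-singular-function argument only yields $|c_k^j|\lesssim \twonorm{F}\lesssim \twonorm{f}+k^2\twonorm{u}\lesssim k(\twonorm{f}+\twonorm{g}_{\Gamma_1})$, as you yourself note, and the sharp exponent is left to a vague gesture (``localize to a $k^{-1}$-ball and rescale, essentially reproducing'' the analysis of \cite{Highfrequency_behaviour}). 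That refined corner analysis is precisely the nontrivial part of the statement; sketching it is not a proof, and reproducing it in full would amount to redoing a large portion of \cite{Highfrequency_behaviour}. Moreover, the results of \cite{Highfrequency_behaviour} that one would like to cite instead (their Theorems 3.2, 3.6 and 3.7) are stated for homogeneous impedance data $g=0$, and your proposal never explains how the general-$g$ case is reduced to them.

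The paper closes exactly this point by a lifting argument that your proposal misses: solve the auxiliary Helmholtz problem $-\De u_g-k^2u_g=0$ in $\Omega_1$, $\partial u_g/\partial n-\ci k u_g=g$ on $\Gamma_1$, on the \emph{convex} polygon $\Omega_1$, where absence of reentrant corners gives $k\twonorm{u_g}+\hone{u_g}\ls\twonorm{g}_{\Gamma_1}$ and $\htwo{u_g}\ls k\twonorm{g}_{\Gamma_1}+\twonorm{g}_{\half,\Gamma_1}$. With a cutoff $\chi$ equal to $1$ near $\Gamma_1$ and $0$ near $\Gamma_0$, the function $U=u-\chi u_g|_\Omega$ solves the Helmholtz problem with homogeneous impedance data and right-hand side $F=f+\De\chi\, u_g+2\na\chi\cdot\na u_g$ satisfying $\twonorm{F}\ls\twonorm{f}+\twonorm{g}_{\Gamma_1}$, so the $g=0$ decomposition of \cite{Highfrequency_behaviour} applies verbatim to $U$ and delivers both the $k^{\alpha_j-\half}$ coefficient bound and $\htwo{U_R}\ls k\twonorm{F}$; adding back $\chi u_g$ produces $u_R$ and accounts for the $\frac1k\twonorm{g}_{\half,\Gamma_1}$ term in $M(f,g)$. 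To repair your proof you should either carry out the refined corner estimate in detail for general $g$, or insert such a reduction to the known $g=0$ result.
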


\begin{proof}
For the proof of \eqref{eq:H estimates} we refer to \cite{Melenk:J:1995,cf06,stability_for_helm}. Next we prove \eqref{eq:splitting of helm}--\eqref{eq:decomp}.
Let $u_g$ be the solution to the following Helmholtz equation on $\Om_1$:
\begin{equation*}
\begin{cases}
-\De u_g - k^2 u_g=0  \quad &\text{in}\   \Omega_1, \\
\;\;\;  \dfrac{\partial u_g}{\partial n} - \ci k u_g= g \quad &\text{on} \  \Gamma_1.
\end{cases}
\end{equation*}
Since $\Om_1$ is convex, there hold the following stability and regularity estimates \cite{Melenk_phd,cf06,stability_for_helm}:
\eq{\label{ugsta}k\norm{u_g}+\norm{u_g}_1\ls \norm{g}_{\Ga_1}\qtq{and} \norm{u_g}_2\le k\norm{g}_{\Gamma_1} +\norm{g}_{\half, \Ga_1}.}
Choose two nested subdomains $\Om_2$ and $\Om_3$ such that $\Om_0\subset\subset\Om_2\subset\subset\Om_3\subset\subset\Om_1$ and let $\chi\in C^2(\Om)$ be a cut-off function  satisfying
\eq{\label{chi} \chi=1 \text{ in } \Om_1\setminus\Om_3,\quad\chi=0\text{ in }\Om_2\setminus\Om_0,\qtq{and} \norm{\chi}_{C^2(\Om)}\ls 1.}
Consider $U=u-\chi u_g|_\Om$ which satisfies
\begin{equation*}
\begin{cases}
-\De U - k^2 U=F:=f+\De\chi u_g+2\na\chi\cdot\na u_g  \quad &\text{in}\   \Omega, \\
\;\;\,\,  \dfrac{\partial U}{\partial n} - \ci k U= 0 \quad &\text{on} \  \Gamma_1,\\
\,\,\ \ \phantom{----} U=0 \quad &\text{on} \ \Gamma_0,
\end{cases}
\end{equation*}
Applying \cite[Theorem 3.2, Theorem 3.6, and Theorem 3.7]{Highfrequency_behaviour}, we have the following decomposition for $U$:
\begin{equation}\label{eq:Udecomp}
 U=U_R+ \sum_{j=1}^{J} c_k^j s_j,
 \end{equation}
with the estimates
\eq{\htwo{U_R}  &\lesssim k \norm{F}\qtq{and}
 |c_k^j| \lesssim k^{\alpha_j -\half}\norm{F}, \  j=1,...,J.}
Moreover, from \eqref{ugsta} and \eqref{chi},  we have
\eq{\norm{F}&\ls\norm{f}+\norm{u_g}_1\ls \norm{f}+\norm{g}_{\Ga_1},\\
\norm{\chi u_g}_2&\ls \norm{u_g}_2\ls k\norm{g}_{\Gamma_1} +\norm{g}_{\half, \Ga_1}.\label{chiug2}}
Then the proof of \eqref{eq:splitting of helm}--\eqref{eq:decomp} follows by letting $u_R=U_R-\chi u_g|_\Om$ and using \eqref{eq:Udecomp}--\eqref{chiug2}. This completes the proof of the lemma.
\end{proof}

\subsection{FEM}
Let $\mT$ be
 a regular and conforming triangulation of $\Om$. For any triangle element $T\in\mT$, let $h_T:=|T|^\frac12$ be the size of $T$. Clearly, $h_T\eqsim{\rm diam}\,T$.   Let $h_\mT:=\max_{T\in\mT}h_T$. Introduce the linear finite element space on $\mT$:
\eq{\label{VT} \VT :=\{ v \in \Hgamma :\  v|_{T} \in P_1(T)  \quad \forall T \in \mT \}, }
where $P_1(T) $ denotes the space of all polynomials on $T$ with total degree $\le 1$. The finite element method for \eqref{eq:weakhelm} reads as:
 Find $u_h \in \VT$ such that
\begin{equation} \label{eq:femhelm}
a(u_h,v_h)=(f,v_h) +\intbound{g, v_h}_{\Gamma_1} \quad \forall v_h \in \VT.
\end{equation}

Introduce the energy norm on $H^1(\Omega)$:
 \[ \energy{v}:= \big(\twonorm{\nabla v}^2+k^2 \twonorm{v}^2\big)^\half.\]

Let $\mI_h$ be the Scott-Zhang interpolation operator \cite{sz90} onto $\VT$. There hold the following error estimates (see \cite[(4.3)]{sz90} and \cite[Lemma~5.1]{Highfrequency_behaviour})
\begin{lemma}\label{lem:SZ} $\mI_h$ satisfies the following approximation properties: For any $T\in\mT$,
\eq{\label{SZ1}\norm{v-\mI_hv}_T+h_T^\frac12\norm{v-\mI_hv}_{\pa T}&\ls h_T\abs{v}_{H^1(\tilde T)}, \quad\forall v\in H^1(\Om),\\
\norm{v-\mI_hv}_T+h_T\norm{v-\mI_hv}_{H^1(T)}&\ls h_T^2\abs{v}_{H^2(\tilde T)},\quad\forall v\in H^2(\Om),\label{SZ2}}
where $\tilde T:=\cup\set{T'\in\mT:\; T'\cap T\neq\emptyset}$. Moreover, for the singular functions $s_j$, we have
\eq{\label{SZ3} \twonorm{s_j-\mI_h s_j} +h_\mT\twonorm{s_j- \mI_h s_j}_{1}\lesssim h_\mT^{1+\alpha_j}, \quad j=1,\cdots, J.
}
\end{lemma}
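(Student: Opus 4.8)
The plan is to obtain (SZ1)--(SZ2) from the two structural properties of the Scott--Zhang operator and to treat the singular functions in (SZ3) by a dyadic decomposition around each corner on $\Ga_0$. First I would recall that $\mI_h$ is built so that it reproduces $\VT$ locally, i.e.\ $\mI_h p=p$ for every $p\in\VT$ and $(\mI_h v)|_T$ depends only on $v|_{\tilde T}$, and so that it is locally $L^2$- and $H^1$-stable, $\norm{\mI_h v}_{H^m(T)}\ls\norm{v}_{H^m(\tilde T)}$ for $m=0,1$, the constants depending only on the shape-regularity of $\mT$; I also use the bounded-overlap property $\sum_{T\in\mT}\chi_{\tilde T}\ls 1$, another consequence of shape-regularity. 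Given these, (SZ1) and (SZ2) follow from the classical argument: for $v\in H^\ell(\Om)$ with $\ell\in\{1,2\}$ and any $p\in P_1(\tilde T)$ write $v-\mI_h v=(v-p)-\mI_h(v-p)$ on $T$, so that local stability gives $\norm{v-\mI_h v}_{H^m(T)}\ls\norm{v-p}_{H^m(\tilde T)}$ for $m\le\ell$; taking $p$ to be an averaged Taylor polynomial of $v$ on $\tilde T$ and combining the Bramble--Hilbert lemma with a scaling argument gives $\norm{v-p}_{H^m(\tilde T)}\ls h_T^{\ell-m}\abs{v}_{H^\ell(\tilde T)}$, which is the volume part of (SZ1) ($\ell=1,m=0$) and of (SZ2) ($\ell=2,m=0,1$); the trace term in (SZ1) then follows by applying the scaled trace inequality $\norm{w}_{\pa T}\ls h_T^{-1/2}\norm{w}_T+h_T^{1/2}\abs{w}_{H^1(T)}$ to $w=v-\mI_h v$.

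The real work is (SZ3). Fix a corner $x_j\in\Ga_0$; since $\mathrm{supp}\,s_j\subset B(x_j,\frac{2L}{3})$ it suffices to estimate over that ball, where $s_j$ coincides, up to the smooth cutoff, with the corner function $r_j^{\alpha_j}\sin(\alpha_j\theta_j)$ and so is smooth away from $x_j$ with the scalings $\abs{s_j}_{H^2(B(x_j,\rho)\setminus B(x_j,\rho/2))}^2\sim\rho^{2\alpha_j-2}$ and $\norm{s_j}_{H^m(B(x_j,\rho))}^2\sim\rho^{2\alpha_j+2-2m}$ ($m=0,1$) for small $\rho$; recall that $\alpha_j\in(\frac12,1)$ since the corners on $\Ga_0$ are reentrant. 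I would split the elements meeting $\mathrm{supp}\,s_j$ into a far family $\{T:\mathrm{dist}(T,x_j)\ge c\,h_T\}$ and a near family $\{T:\mathrm{dist}(T,x_j)<c\,h_T\}$, with $c$ depending only on the shape-regularity constant and chosen so large that for a far element the whole patch $\tilde T$ stays in the smooth region. For the far family, grouping elements into dyadic shells $\mathrm{dist}(T,x_j)\sim 2^{-m}L$ and using $h_T\le\min(h_\mT,\,C2^{-m}L)$ on the $m$-th shell, (SZ2) together with the scaling above and bounded overlap give a per-shell contribution $\ls\min(h_\mT,2^{-m}L)^2(2^{-m}L)^{2\alpha_j-2}$ to $\sum\norm{s_j-\mI_h s_j}_{H^1(T)}^2$ and $\ls\min(h_\mT,2^{-m}L)^4(2^{-m}L)^{2\alpha_j-2}$ to $\sum\norm{s_j-\mI_h s_j}_{L^2(T)}^2$; since $2\alpha_j-2<0$ these are geometric-type series in $m$ dominated by the shell at radius $\sim h_\mT$, and sum to $\ls h_\mT^{2\alpha_j}$ and $\ls h_\mT^{2+2\alpha_j}$ respectively. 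For the near family, the elements together with their patches lie in a single ball $B(x_j,Ch_\mT)$, so local stability gives $\norm{s_j-\mI_h s_j}_{H^m(T)}\ls\norm{s_j}_{H^m(\tilde T)}$ for $m=0,1$, and by bounded overlap and the scaling above the sums are $\ls\norm{s_j}_{H^m(B(x_j,Ch_\mT))}^2\ls h_\mT^{2\alpha_j+2-2m}$. Adding the two families yields (SZ3).

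I expect (SZ1)--(SZ2) to be routine bookkeeping; the delicate point is (SZ3), namely organising the near-corner elements for a mesh that is only shape-regular (no quasi-uniformity is assumed), so that the coarse element sitting at the corner contributes only $O(h_\mT^{\alpha_j})$, and verifying that the dyadic sum over shells is summable with the stated exponent rather than picking up a logarithm. This is exactly where the bound $\alpha_j<1$ enters, through the convergence of $\int_0^{h_\mT}r^{2\alpha_j-1}\,dr\sim h_\mT^{2\alpha_j}$ near the corner and the fact that $\int_{h_\mT}^{L}r^{2\alpha_j-3}\,dr$ is dominated by its lower endpoint $\sim h_\mT^{2\alpha_j-2}$. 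Alternatively one may simply invoke \cite[(4.3)]{sz90} and \cite[Lemma~5.1]{Highfrequency_behaviour}, but the shell argument above makes the role of the corner exponent transparent.
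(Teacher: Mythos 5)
Your argument is correct, but it is worth noting that the paper does not actually prove this lemma at all: (SZ1)--(SZ2) are quoted from Scott--Zhang \cite[(4.3)]{sz90}, and the singular-function estimate (SZ3) is quoted from \cite[Lemma~5.1]{Highfrequency_behaviour}, as the sentence preceding the lemma indicates. So your proposal supplies a self-contained proof where the paper relies on citation. Your treatment of (SZ1)--(SZ2) is the standard local polynomial-invariance/Bramble--Hilbert/trace argument, and your dyadic-shell proof of (SZ3) is sound: the far-shell sums are indeed dominated by the shell at radius $\sim h_\mT$ precisely because $2\alpha_j-2<0$ and $2\alpha_j>0$ (here $\alpha_j\in(\frac12,1)$ since the corners of the convex obstacle $\Om_0$ are reentrant for $\Om$), and the near-corner elements contribute $O(h_\mT^{\alpha_j})$ in $H^1$ and $O(h_\mT^{1+\alpha_j})$ in $L^2$ exactly as you compute; this makes transparent why no logarithm appears, which the citation hides. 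One small imprecision: the Scott--Zhang operator with boundary-value preservation is \emph{not} locally $L^2$-stable in the pure form $\norm{\mI_h v}_{T}\ls\norm{v}_{\tilde T}$, because the nodal values are edge averages; only $\norm{\mI_h v}_{T}\ls\norm{v}_{\tilde T}+h_T\abs{v}_{H^1(\tilde T)}$ (equivalently (SZ1) itself) holds. This does not damage your near-family estimate, since replacing the claimed $m=0$ stability by this weaker bound still gives $\norm{s_j-\mI_h s_j}_{T}\ls\norm{s_j}_{B(x_j,Ch_\mT)}+h_\mT\abs{s_j}_{H^1(B(x_j,Ch_\mT))}\ls h_\mT^{1+\alpha_j}$, so the stated exponents survive; but the stability hypothesis should be phrased accordingly. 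Finally, you implicitly read the singular function as $r_j^{\alpha_j}\sin(\alpha_j\theta_j)$ rather than the $r_j^{\alpha}$ appearing in the paper's definition of $s_j$; this is the reading consistent with the exponent $h_\mT^{1+\alpha_j}$ in (SZ3) and with the source \cite{Highfrequency_behaviour}, so your choice is the right one.
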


As a consequence of the above lemma and the decomposition \eqref{eq:splitting of helm}, we have the following error estimate for the Scott-Zhang interpolant of the exact solution $u$  under the condition that $kh_\mT\ls 1$:
\eq{\label{SZ4}
\twonorm{u-\mI_h u}+h_\mT\twonorm{u-\mI_h u}_1\ls \big(kh_\mT^2+k^{-\half}(kh_\mT)^{\alpha}h_\mT\big)M(f,g).
}

By using the modified duality argument \cite{zw2013}, the following preasymptotic a priori error estimate for the linear FEM for \eqref{eq:helm} with $g=0$ was proved in \cite[Theorem 5.5]{Highfrequency_behaviour}. Similarly, we have the following a priori error estimate for the case of general $g$, whose proof is almost the same as that of \cite[Theorem 5.5]{Highfrequency_behaviour} and is omitted.
\begin{lemma}\label{lem:a priori} Let $u$ and $u_h$ be the exact solution and its finite element approximation. Assume that $\conditionT$ is sufficiently small. Then
\eqn{\energy{u-u_h}\ls\big(kh_\mT+k^{-\frac12}(kh_\mT)^\al+k^3h_\mT^2\big)M(f,g) .}
\end{lemma}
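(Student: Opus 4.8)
The plan is to mimic the proof of \cite[Theorem 5.5]{Highfrequency_behaviour} verbatim, with only the obvious modification of carrying the boundary datum $g$ through the argument. The starting point is the standard error equation: since both $u$ and $u_h$ solve the variational problem on their respective spaces, Galerkin orthogonality gives $a(u-u_h, v_h)=0$ for all $v_h\in\VT$, and in particular $a(u-u_h, u-u_h) = a(u-u_h, u-\mI_h u)$ for the Scott-Zhang interpolant. Taking real parts and using the coercivity-up-to-a-compact-perturbation structure of $a(\cdot,\cdot)$, i.e. $\re a(v,v) = \energy{v}^2 - 2k^2\twonorm{v}^2$, one obtains the usual bound
\eqn{\energy{u-u_h}^2 \ls \energy{u-\mI_h u}^2 + k^2\twonorm{u-u_h}^2 + (\text{cross terms controlled by Cauchy--Schwarz}),}
so that everything reduces to (i) an interpolation estimate for $\energy{u-\mI_h u}$ and (ii) an $L^2$ estimate $k\twonorm{u-u_h}$.

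For (i), I would invoke \eqref{SZ4} directly: since $kh_\mT\ls 1$ (which follows from $\conditionT$ being small together with $k\gg1$), \eqref{SZ4} yields $\energy{u-\mI_h u}\ls h_\mT^{-1}\cdot h_\mT\twonorm{u-\mI_h u}_1 + \dots \ls (kh_\mT + k^{-1/2}(kh_\mT)^\al)kM(f,g)$, after multiplying through by the right powers of $k$ to match the energy norm; the key inputs are the regularity decomposition \eqref{eq:splitting of helm}, the $H^2$ bound $\htwo{u_R}\ls kM(f,g)$, and the singular-function interpolation bound \eqref{SZ3}. For (ii), the heart of the matter is the modified duality argument of \cite{zw2013}: one introduces the solution $w$ of the adjoint Helmholtz problem with right-hand side $u-u_h$, splits $w$ using the same regularity decomposition Lemma~\ref{thm:regularity_estimates} (now with data $\|u-u_h\|$ in place of $M(f,g)$), and then estimates $k\twonorm{u-u_h}^2 = -k\,\overline{a(u-u_h,w)}$-type quantities by inserting $\mI_h w$ and using Galerkin orthogonality together with the stability estimate \eqref{eq:H estimates}. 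This is where the $k^3h_\mT^2$ pollution term is generated: the product of the energy-norm interpolation error for $u$ and that for the dual solution $w$, together with the extra factor of $k$ from the regularity constants in \eqref{eq:decomp}, produces a term of size $k\cdot(kh_\mT+\dots)\cdot(kh_\mT+\dots)$, whose worst contribution is $k^3h_\mT^2 M(f,g)$ once the smallness of $\conditionT$ is used to absorb the $k\twonorm{u-u_h}^2$ term back to the left-hand side.

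The main obstacle — and the only genuinely new point relative to \cite{Highfrequency_behaviour} — is tracking the $g$-dependent contributions. The boundary term $\intbound{g,v}_{\Gamma_1}$ in the right-hand side means that when one passes from $u$ to $U=u-\chi u_g$ as in Lemma~\ref{thm:regularity_estimates}, the cut-off corrector $\chi u_g$ must be carried through both the interpolation estimate and the duality argument; fortunately \eqref{chiug2} already supplies $\norm{\chi u_g}_2\ls kM(f,g)$, which is exactly of the same order as $\htwo{u_R}$, so the $H^2$-regular part of $u$ obeys a uniform bound and no new terms of a different order appear. One must also check that the adjoint problem (which has $+\ci k$ on $\Gamma_1$ instead of $-\ci k$) enjoys the same regularity decomposition and stability estimates — it does, since complex conjugation maps one to the other and $\|u-u_h\|\in L^2$ is the only datum. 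Granting these routine verifications, the three pieces $kh_\mT$, $k^{-1/2}(kh_\mT)^\al$, and $k^3h_\mT^2$ assemble exactly as stated, and the proof is complete.
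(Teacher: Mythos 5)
Your proposal takes essentially the same route as the paper, which gives no detailed proof at all: it simply observes that the estimate follows by the modified duality argument of \cite{zw2013} exactly as in \cite[Theorem 5.5]{Highfrequency_behaviour}, the only new point being to carry the datum $g$ through via the regularity decomposition of Lemma~\ref{thm:regularity_estimates} and the interpolation bound \eqref{SZ4} — precisely the two ingredients you identify. So your sketch matches the paper's intended argument and nothing further needs to be said.
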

\begin{remark}\label{rPriori}
{\rm (i)} The error bound consists of two parts: the interpolation error $O(kh_\mT+k^{-\frac12}(kh_\mT)^\al)$  and the pollution error $O(k^3h_\mT^2)$. The pollution error is of the same order as that for Helmholtz problems without singularities \cite{wu2014,zw2013,Du2015Preasymptotic}. The interpolation error consists of two subparts: one is $O(kh_\mT)$ corresponding to the regular component of $u$, another is $O(k^{-\frac12}(kh_\mT)^\al)$ corresponding to the singular components at reentrant corners of $\Ga_0$, Since $\al\in(\frac12,1)$, $\energy{u-u_h}=O(k^3h_\mT^2)$, i.e., the pollution term dominates in the error bound, if $h_\mT\gtrsim k^{-2}$. The interpolation error corresponding to the regular component dominates in the error bound if $k^{-\frac{3-2\al}{2-2\al}}\ls h_\mT\ls k^{-2}$, and the interpolation error due to singularities dominates in the error bound if $h_\mT\ls k^{-\frac{3-2\al}{2-2\al}}$.

{\rm (ii)} Since the FEM on the quasi-uniform meshes does not converges at full order for $h_\mT\ls k^{-\frac{3-2\al}{2-2\al}}$, it is necessary to use AFEM to resolve the singularities at corners of $\Om_0$, to our knowledge,  whose behaviour in the preasymptotic regime is not well-understood.  Other types of singularities due to source or discontinuous coefficient will be considered in other works.

{\rm (iii)} The analysis of higher order FEM requires finer decomposition of the exact solution. Based on the result for non-singular problems \cite{Melenk2010,Melenk2011}, we conjecture that the regular part $u_R$ can be further decomposed into a non-oscillating component in $H^2$ and an oscillating analytic component, more precisely, $u$ can be decomposed as
\eqn{&u=u_{\mathcal{E}}+u_{\mathcal{A}}+ \sum_{j=1}^{J} c_k^j s_j,\\
\norm{u_{\mathcal{E}}}_2\ls M(f,g)&\qtq{and} \norm{u_{\mathcal{A}}}_j\ls k^{j-1}M(f,g), j=0,1,2,\cdots.}
Once such a decomposition holds, the interpolation from the  $p^{\rm th}$ ($p\ge 2$) order Lagrange finite element space satisfies the following error estimate:
\eqn{\energy{u-\mI_hu}\ls\big(h_\mT+k^ph_\mT^p+k^{-\frac12}(kh_\mT)^\al\big) M(f,g),}
which implies that the singularities become significant  if  $h_\mT\ls k^{-1-\frac{1}{2p-2\al}}$. Obviously, adaption starts to work for higher order FEM on coarser meshes than the linear FEM, and is more necessary. We also leave the analysis of higher order AFEM to a future work.

\end{remark}

\subsection{Elliptic projection}
In this subsection, we introduce the elliptic projection of the exact solution, which will be used as a bridge linking the analysis for the AFEM for the Helmholtz problem to the well-developed theory for the AFEM for the elliptic problems \cite[etc.]{dorfler,convergenceAFEM,quasi-optimal}.

Let the sesquilinear form $b(\cdot , \cdot)$ be the inner product on $\Hgamma \times \Hgamma$:
\begin{align}\label{form b}
b(u,v):=&(\nabla u, \nabla v) + (u, v).
\end{align}
For any $\phi \in \Hgamma$,  its elliptic projections $P_h\phi \in \VT $ is defined by
\begin{equation}\label{eq:elliptic fem}
b(P_h\phi, v_h)=b(\phi, v_h) \quad \forall v_h \in \VT.
\end{equation}
Denote by $\tildeu{h}:=P_hu$ the elliptic projection of the exact solution $u$. Clearly, $\tildeu{h}\in\VT$ can be regarded as the finite element approximation of the elliptic problem which is an equivalent variant of the Helmholtz problem \eqref{eq:helm} but regarding $u$ in the right hand side as known.
\begin{equation} \label{eq:elliptic projection}
\begin{cases}
-\De \tilde u +\tilde u=f+(k^2+1) u  \quad &\text{in}\   \Omega, \\
\quad\quad\;\,  \dfrac{\partial \tilde u}{\partial n} =\ci k u +g \quad &\text{on} \  \Gamma_1,\\
\,\ \phantom{---} \tilde u=0 \quad &\text{on} \ \Gamma_0.
\end{cases}
\end{equation}
Clearly, $\tilde u=u$.

The following lemma gives some useful estimates for elliptic projection.
\begin{lemma}\label{lem:elliptic projection estimate}
Let $\phi \in \Hgamma $, $P_h \phi$ is its elliptic projection on $\V{\mT}$, then we have the following estimates
\begin{align}
\hone{\phi-P_h \phi}&=\inf_{v_h\in \VT}\hone{\phi-v_h},\\
\twonorm{\phi-P_h \phi} &\lesssim h_\mT^\alpha \hone{\phi-P_h \phi} \lesssim  h_\mT^\alpha \hone{\phi},\label{Ph1} \\
\bdnorm{\phi-P_h \phi} &\lesssim h_\mT^\halfa \hone{\phi-P_h \phi}\lesssim h_\mT^\halfa \hone{\phi}.\label{Ph2}
\end{align}
\end{lemma}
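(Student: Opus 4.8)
The plan is to establish the three estimates in turn, the first being a standard Galerkin orthogonality argument, and the other two relying on an Aubin--Nitsche duality argument adapted to the reentrant-corner regularity of the dual problem.

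First I would prove the Galerkin best-approximation identity. Since $b(\cdot,\cdot)$ is a (Hermitian, coercive) inner product on $\Hgamma$ inducing the norm $\hone{\cdot}$, the defining relation \eqref{eq:elliptic projection} is precisely the orthogonality $b(\phi-P_h\phi, v_h)=0$ for all $v_h\in\VT$, i.e. $P_h\phi$ is the $b$-orthogonal projection of $\phi$ onto $\VT$. Hence for any $v_h\in\VT$, $\hone{\phi-v_h}^2=\hone{\phi-P_h\phi}^2+\hone{P_h\phi-v_h}^2\ge\hone{\phi-P_h\phi}^2$, which gives $\hone{\phi-P_h\phi}=\inf_{v_h\in\VT}\hone{\phi-v_h}$.

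Next, for \eqref{Ph1} I would run a duality argument. Let $e:=\phi-P_h\phi$ and let $w\in\Hgamma$ solve the dual problem $b(v,w)=(v,e)$ for all $v\in\Hgamma$ (the same elliptic operator, with homogeneous Dirichlet data on $\Ga_0$ and homogeneous Neumann data on $\Ga_1$, right-hand side $e\in L^2(\Om)$). Because $\Om$ has reentrant corners on $\Ga_0$ with minimal angle exponent $\al\in(\tfrac12,1]$, $w$ admits a decomposition into an $H^2$ part and corner singularities $\propto s_j$ of the same type as in Lemma~\ref{thm:regularity_estimates}; combining the $H^2$-type bound on the regular part, the interpolation estimates \eqref{SZ2} and \eqref{SZ3} (the latter scaled to the correct $L^2$ right-hand side norm, giving $\hone{w-\mI_hw}\ls h_\mT^{\al}\twonorm{e}$), one obtains $\hone{w-\mI_hw}\ls h_\mT^{\al}\twonorm{e}$. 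Then $\twonorm{e}^2=b(e,w)=b(e,w-\mI_hw)\le\hone{e}\,\hone{w-\mI_hw}\ls h_\mT^{\al}\hone{e}\twonorm{e}$, using Galerkin orthogonality to subtract $\mI_hw\in\VT$; dividing by $\twonorm{e}$ gives the first inequality in \eqref{Ph1}, and the second follows from the best-approximation identity together with $\hone{\phi-P_h\phi}\le\hone{\phi-\mI_h\phi}\ls\hone{\phi}$ (or simply $\hone{\phi-P_h\phi}\le\hone{\phi}$ since $0\in\VT$).

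For \eqref{Ph2} I would use the analogous duality, this time testing against the $L^2(\Ga_1)$ content of $e$: let $w\in\Hgamma$ solve $b(v,w)=\pd{v,e}_{\Ga_1}$ for all $v\in\Hgamma$. The trace $e|_{\Ga_1}$ lies in $L^2(\Ga_1)$, so the boundary datum is in $L^2(\Ga_1)\hookrightarrow H^{-1/2}(\Ga_1)$, and the corresponding dual solution $w$ has regularity index $\tfrac12$ better than for an $L^2(\Om)$ right-hand side; the corner-adapted decomposition and the interpolation estimates then yield $\hone{w-\mI_hw}\ls h_\mT^{\al/2}\bdnorm{e}$. Proceeding as before, $\bdnorm{e}^2=\pd{e,e}_{\Ga_1}=b(e,w)=b(e,w-\mI_hw)\ls h_\mT^{\al/2}\hone{e}\bdnorm{e}$, which after division gives the first inequality of \eqref{Ph2}; the second is again the best-approximation bound.

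\textbf{Main obstacle.} The routine parts (Galerkin orthogonality, Cauchy--Schwarz, best approximation) are immediate. The delicate point is the regularity of the two dual problems near the reentrant corners on $\Ga_0$: one must verify that the dual solution admits a decomposition of the form $w_R+\sum_j d^j s_j$ with $w_R\in H^2$ (or, for \eqref{Ph2}, with the appropriate fractional Sobolev gain for an $H^{-1/2}(\Ga_1)$ boundary source), so that the Scott--Zhang estimates \eqref{SZ2}--\eqref{SZ3} apply and produce exactly the exponents $h_\mT^{\al}$ and $h_\mT^{\al/2}$. This is a standard corner-regularity fact for the Laplacian on polygonal domains, but it is where all the work lies; it is also the reason $\al$ (not $1$) appears in the exponents. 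Since the problem here is the fixed positive-definite operator $-\De+I$, these bounds are $k$-independent, which is exactly why the elliptic projection is a useful bridge to the classical AFEM theory.
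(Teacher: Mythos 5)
Your treatment of the best-approximation identity and of \eqref{Ph1} is correct and is essentially the paper's own argument: the paper also introduces the dual problem $-\De w+w=\phi-P_h\phi$ in $\Om$ with $\pa w/\pa n=0$ on $\Ga_1$ and $w=0$ on $\Ga_0$, invokes the Grisvard-type corner decomposition $w=w_R+\sum_j c_j s_j$ with $\htwo{w_R}+\sum_j|c_j|\ls\twonorm{\phi-P_h\phi}$, and concludes by Aubin--Nitsche together with the interpolation estimates \eqref{SZ2}--\eqref{SZ3}, exactly as you do.

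Where you diverge is \eqref{Ph2}, and there your argument as written has a gap. The paper obtains \eqref{Ph2} in one line: it is a direct consequence of \eqref{Ph1} and the (multiplicative) trace inequality, i.e.\ $\bdnorm{e}^2\ls\twonorm{e}\,\hone{e}\ls h_\mT^{\al}\hone{e}^2$ with $e=\phi-P_h\phi$, so no second duality is needed. Your alternative duality with the boundary source $b(v,w)=\pd{v,e}_{\Ga_1}$ could in principle work, but the regularity claim you base it on is stated backwards: a Neumann datum in $L^2(\Ga_1)$ gives \emph{less} interior regularity than an $L^2(\Om)$ volume source (roughly $H^{3/2}(\Om)$ away from the reentrant corners, not a half order more than $H^2$), and the exponent $h_\mT^{\al/2}$ in $\hone{w-\mI_hw}\ls h_\mT^{\al/2}\bdnorm{e}$ is asserted rather than derived --- it does not come out of the decomposition you cite. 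If one carries out that duality carefully one in fact gets a rate $h_\mT^{\min(1/2,\al)}=h_\mT^{1/2}$ (since $\al>\tfrac12$), which is stronger than \eqref{Ph2} but requires boundary-source regularity theory on the polygon that you have not supplied. So either supply that regularity analysis, or (better, and as the paper does) drop the second duality and deduce \eqref{Ph2} from \eqref{Ph1} via the multiplicative trace inequality.
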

\begin{proof}
Since \eqref{Ph2} is a direct consequence of \eqref{Ph1} and the trace inequality, it suffices to prove the first inequality in \eqref{Ph1}. Introduce the adjoint problem
\begin{equation*}
\begin{cases}
-\De w +w=\phi-P_h \phi  \quad &\text{in}\   \Omega, \\
\quad\quad\;\;  \dfrac{\partial w}{\partial n} =0 \quad &\text{on} \  \Gamma_1,\\
\;\ \phantom{---} w=0 \quad &\text{on} \ \Gamma_0.
\end{cases}
\end{equation*}
From \cite{elliptic_non-smooth}, we have the following decomposition similar to \eqref{eq:splitting of helm}:
\eqn{ w=w_R + \sum_{j=1}^J c_j  s_j, \quad \htwo{w_R}+\sum_{j=1}^J |c_j| \lesssim \twonorm{\phi-P_h}.}
The rest of the proof follows from the  the standard Aubin-Nitsche trick (or duality argument) \cite{bs08} and the interpolation error estimates \eqref{SZ2}--\eqref{SZ3}. We omitted the details.
\end{proof}

\section{A posteriori error estimates}\label{s2}
In this section, with the help of the elliptic projection, we derive upper and lower bounds for the FEM and establish some equivalent relationship between the a posteriori error estimators for the finite element solution $u_h$ and those for the elliptic projection $\tildeu{h}$, which are crucial for proving the convergence and quasi-optimality of the AFEM.

Let $\mE$ be the set of all edges of the elements in the triangulation $\mT$ and $\mE^{I}$ be the set of all interior edges in $\mT$.
For any $e\in\mE$, let $\Omega_e:=\cup \{T\in \mT:\; e\subset \partial T\}$.

\subsection{Error estimators} In this subsection, we introduce the residual-type error estimators for $u_h$ and $\tildeu{h}$.

 For any function $v\in H^1(\Om)^d$, we define its jump across a interior edge $e=T\cap T'\in\mE^I$ as follows.
\begin{align*}
\jump{v}:=
v|_{T} \cdot n_{T}+ v|_{T'} \cdot n_{T'},
\end{align*}
where  $n_{T}$ and $n_{T'}$ denote the  unit outward normal on $\partial T$ and $\partial T'$, respectively.

For any $v_h\in \VT$, as usual (cf. \cite{convergenceAFEM,quasi-optimal}), the element residuals, jump residuals, error estimators and oscillations for $v_h$ regarded as an approximation to the Helmholtz problem \eqref{eq:helm} are defined as follows:
\begin{align*}
R_T(v_h):=&(f+k^2 v_h)|_{T}, \quad T \in \mathcal{T},\\
R_e(v_h):=&
\begin{cases}
-\half \jump{\nabla v_h}, &e \subset \Omega,  \\
-\dfrac{\pa v_h}{\pa n} + \ci k v_h+g, & e \subset \Gamma_1,\\
0, & e \subset \Gamma_0,
\end{cases}\quad e\in\mE, \quad R_{\pa T}|_e:=R_e,\quad\forall e\subset\pa T,\\
\eta^2_T(v_h):=&h^2_T \twonorm{ R_T(v_h)}^2_T+ h_T \twonorm{R_{\pa T}(v_h) }^2_{\partial T}, \\
\osc^2_T(v_h):=&h^2_T \twonorm{R_T(v_h)-\overline{R_T(v_h)}}^2_T+  h_T\twonorm{R_{\pa T}(v_h)-\overline{R_{\pa T}(v_h)}}^2_{\partial T}\\
=&h_T^2\twonorm{f-\bar f}_T^2+h_T\twonorm{g-\bar g}_{\partial T \cap \Gamma_1}^2,
\end{align*}
where $\bar{v}|_T$ denotes the $L^2(T)$ projection of $v|_T$ onto $P_1(T)$, $\overline{R_{\pa T}(v_h)}|_e=\overline{R_e(v_h)}$, and $\bar{v}|_e$ denotes the $L^2(e)$ projection of $v|_e$ onto $P_1(e)$. Obviously, $\osc_T^2(v_h)$ is independent of $v_h$.

The element residuals, jump residuals, error estimators and oscillations for $v_h$ regarded as an approximation to the elliptic problem \eqref{eq:elliptic projection} are defined as follows:
\begin{align*}
\tildeR_T(v_h)&:=(f+(k^2+1)u-v_h)|_T, \quad T \in \mathcal{T},\\
\tildeR_e(v_h)&:=
\begin{cases}
-\half \jump{\nabla v_h}, &e \subset \Omega,  \\
-\dfrac{\pa v_h}{\pa n} +\ci k u  + g,  & e \subset \Gamma_1,\\
0, & e \subset \Gamma_0,
\end{cases}\quad e\in\mE,\quad \tildeR_{\pa T}|_e:=\tildeR_e,\quad\forall e\subset\pa T,\\
\tildeeta^2_T(v_h)&:=h^2_T \| \tildeR_T(v_h)\|^2_T+ h_T \| \tildeR_{\pa T}(v_h) \|^2_{\partial T}, \\
\tildeosc^2_T(v_h)&:=h^2_T \| \tildeR_T(v_h)-\overline{\tildeR_T(v_h)} \|^2_T+ h_T \twonorm{\tildeR_{\pa T}(v_h)-\overline{\tildeR_{\pa T}(v_h)}}^2_{\partial T}
\end{align*}

Let $u\in \Hgamma$ be the solution to Helmholtz problem\eqref{eq:weakhelm}, $u_h \in \VT$ be its finite element approximation given in \eqref{eq:femhelm}, and  $\tildeu{h}\in \VT$ be its elliptic projection defined by \eqref{eq:elliptic fem}. The element residuals, jump residuals, error estimators and oscillations for $u_h$ and $\tildeu{h}$ are denoted respectively as follows:
\begin{align}
&R_T:=R_T(u_h),\quad R_e:=R_e(u_h),\quad \eta_T:=\eta_T(u_h),\quad  \osc_T:=\osc_T(u_h),\label{RT}\\
&\tildeR_T:=\tildeR_T(\tildeu{h}),\quad\tildeR_e:=\tildeR_e(\tildeu{h}),\quad \tildeeta_T:=\tildeeta_T(\tildeu{h}),\quad  \tildeosc_T:=\tildeosc_T(\tildeu{h}).\label{tRT}
\end{align}
Note that the quantities on $u_h$ are computable and will be used in the adaptive algorithm, while the quantities on $\tildeu{h}$ are not since they involves the exact solution $u$, which will be used only in the theoretical analysis.

Furthermore, for any $\mM\subset \mT$, denote by:
\eq{\eta_\mM&:= \Big(\sum_{T\in \mM}\eta^2_T \Big)^\half, \quad
     \osc_\mM:=\Big(\sum_{T\in \mM}\osc^2_T\Big)^\half,\label{M1}\\
     \tildeeta_\mM&:= \Big(\sum_{T\in \mM}\tildeeta^2_T \Big)^\frac{1}{2},\quad
\tildeosc_\mM :=\Big(\sum_{T\in \mM}\tildeosc^2_T\Big)^\half.\label{M2}}

\subsection{Upper bounds} In this subsection,
we  derive a posteriori upper bounds for the errors of $u_h$ and $\tildeu{h}$. For simplicity, denote by
\eqn{\rho:= u-u_h,\quad \tilderho:= u-\tildeu{h},}
  and by
\eq{\label{eq:t}
t(k,h):=k^\half h^\halfa +k^{\frac{3}{4}+\halfa}h^\alpha+k^\frac{3}{2} h^\frac{1+\alpha}{2}+k^{\frac{3}{2}+\alpha}h^{2\alpha}+k^3 h^{1+\alpha}.}
 Obviously, $t(k,h_\mT)$ may be arbitrarily small as long as $k^3 h_\mT^{1+\alpha}$ is small enough. Moreover, there hold the following Galerkin orthogonalities:
\eq{\label{GO}
a(\rho,v_h)=0\qtq{and} b(\tilderho,v_h)=0\quad\forall v_h\in \VT.}

\begin{lemma}[Upper bounds] \label{lem:upper bound}
Let $u\in \Hgamma$ and $u_h \in \VT$ be the solutions to \eqref{eq:weakhelm} and \eqref{eq:femhelm}, respectively, and let $\tildeu{h}\in \VT$ be the elliptic projection of $u$. Then we have the following estimates,
\eq{
 \energy{u-u_h} &\lesssim \big(1+k^{\half}(kh_\mT)^{\alpha}+k^2h_\mT\big) \eta_\mT, \label{UB1}\\
  \|u-u_h\| &\lesssim \big(kh_\mT+k^{-\half}(kh_\mT)^{\alpha}\big)\eta_\mT, \label{UB2}\\
 k^\frac12\twonorm{u-u_h}_{\Gamma_1}&\lesssim \big(1+k h_\mT^\half
+k^{\frac14} (kh_\mT)^\halfa\big)\eta_\mT,\label{UB3}\\
 \twonorm{u-\tildeu{h}}_1 &\lesssim (1+k^\half h_\mT^\halfa + k^3 h_\mT^{1+\alpha}+k^{\frac{3}{2}+\alpha} h_\mT^{2\alpha}) \eta_\mT,\label{UB4}\\
\twonorm{u-\tildeu{h}}_1 &\le \tLub\tildeeta_\mT,\label{UB5}  }
where $\tLub$ is a constant independent of $k$ and $h_\mT$.
\end{lemma}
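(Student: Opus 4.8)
The plan is to prove the five bounds in roughly the order listed, exploiting the Galerkin orthogonalities \eqref{GO} and the upper bounds as they are established to bootstrap the later ones. I would begin with \eqref{UB1}--\eqref{UB3}, which concern the genuine Helmholtz error $\rho = u - u_h$. The standard residual-based a posteriori argument gives, for any $v\in\Hgamma$ with $\energy{v}=1$ and its Scott-Zhang interpolant $v_h=\mI_h v\in\VT$,
\[
a(\rho, v) = a(\rho, v - v_h) = \sum_{T\in\mT}\big( (R_T(u_h), v-v_h)_T + (R_{\pa T}(u_h), v-v_h)_{\pa T}\big),
\]
using \eqref{GO}; then Cauchy--Schwarz together with the Scott-Zhang estimates \eqref{SZ1} yields $|a(\rho,v)|\lesssim \eta_\mT$. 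The subtlety for Helmholtz is that $a(\cdot,\cdot)$ is only a G\r{a}rding-type form, so I cannot directly read off $\energy{\rho}$ from $a(\rho,\rho)$; instead I would write $\energy{\rho}^2 = \re a(\rho,\rho) + 2k^2\twonorm{\rho}^2 + k\,\re(\ci k\intbound{\rho,\rho}_{\Gamma_1})$, bound the first term by $\eta_\mT$ via the above, and control $\twonorm{\rho}$ and $\twonorm{\rho}_{\Gamma_1}$ by duality (Aubin--Nitsche against the adjoint Helmholtz problem, invoking the regularity decomposition of Lemma~\ref{thm:regularity_estimates} and the interpolation estimates \eqref{SZ3}--\eqref{SZ4}). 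This duality step is exactly where the wave-number weights $kh_\mT$, $k^{-\frac12}(kh_\mT)^\alpha$, $k^{1/4}(kh_\mT)^{\alpha/2}$ enter, and it is what ties \eqref{UB2} and \eqref{UB3} together with \eqref{UB1}; one should get \eqref{UB2} and \eqref{UB3} more or less as intermediate outputs, then feed them back to close \eqref{UB1}. I expect this duality/absorption bootstrap — making sure the $k^2h_\mT$ and $k^{1/2}(kh_\mT)^\alpha$ terms appearing on the right of \eqref{UB1} are genuinely the ones produced and that nothing is absorbed prematurely — to be the main technical obstacle.

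For \eqref{UB4} I would relate $\tilderho = u-\tildeu{h}$ to $\rho$. Since $\tildeu{h} = P_h u$ and $u_h$ is the Helmholtz FE solution, both lie in $\VT$, so $\twonorm{\tilderho}_1 \le \twonorm{\rho}_1 + \twonorm{u_h - \tildeu{h}}_1$, and $\twonorm{\rho}_1 \le \energy{\rho}$ up to the $k$-weight. It remains to estimate $\twonorm{u_h-\tildeu{h}}_1$: testing the difference of the defining identities \eqref{eq:femhelm} and \eqref{eq:elliptic fem} against $u_h-\tildeu{h}$ and using the best-approximation property in Lemma~\ref{lem:elliptic projection estimate} together with \eqref{Ph1}--\eqref{Ph2} expresses this quantity in terms of $\twonorm{\rho}$, $\twonorm{\rho}_{\Gamma_1}$, and powers of $h_\mT^\alpha$; substituting \eqref{UB2}--\eqref{UB3} and collecting the resulting $k$-$h_\mT$ monomials gives precisely the factor $(1 + k^{1/2}h_\mT^{\alpha/2} + k^3 h_\mT^{1+\alpha} + k^{3/2+\alpha}h_\mT^{2\alpha})$ — these are the terms of $t(k,h_\mT)$ shifted by the leading $1$, so one sees \eqref{eq:t} emerge naturally here. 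The bookkeeping of exponents is routine but must be done carefully to match the stated constants.

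Finally, \eqref{UB5} is the cleanest: $\tildeu{h}$ is the Galerkin approximation, in the $b(\cdot,\cdot)$-inner product, to the genuinely \emph{coercive} elliptic problem \eqref{eq:elliptic projection}, so the classical residual a posteriori upper bound for symmetric positive-definite second-order problems applies verbatim. That is, $\twonorm{\tilderho}_1 = \twonorm{u-\tildeu{h}}_1 \eqsim \sup_{\twonorm{v}_1=1} b(\tilderho, v)$, and by \eqref{GO} (the $b$-orthogonality), $b(\tilderho, v) = b(\tilderho, v-\mI_h v) = \sum_T\big((\tildeR_T, v-\mI_h v)_T + (\tildeR_{\pa T}, v-\mI_h v)_{\pa T}\big)$, whence Cauchy--Schwarz and \eqref{SZ1} give $\twonorm{\tilderho}_1 \le \tLub\,\tildeeta_\mT$ with $\tLub$ depending only on shape-regularity and the (fixed) coercivity/continuity constants of $b$ — in particular independent of $k$ and $h_\mT$, as claimed. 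No $k$-dependent absorption occurs because $b$ has no indefinite part. I would present \eqref{UB5} first in the write-up if it streamlines the exposition, but logically it stands on its own and poses no difficulty; the real work is the indefinite-form duality argument behind \eqref{UB1}--\eqref{UB3}.
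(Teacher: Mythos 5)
Your plan for \eqref{UB2}, \eqref{UB1} and \eqref{UB5} coincides with the paper's proof: a duality argument with the singular-function decomposition of the adjoint solution gives \eqref{UB2}, the G\aa rding-type identity $\twonorm{\nabla\rho}^2=\re a(\rho,\rho)+k^2\twonorm{\rho}^2$ together with \eqref{UB2} gives \eqref{UB1}, and the standard coercive residual argument gives \eqref{UB5}. But note that \eqref{UB3} is not proved by duality in the paper: it follows from the one-line identity $k\bdnorm{\rho}^2=-\im a(\rho,\rho)=-\im a(\rho,\rho-\mI_h\rho)\lesssim \eta_\mT\twonorm{\nabla\rho}$, and the square-root structure of the weights in \eqref{UB3} is exactly the square root of the bound for $\twonorm{\nabla\rho}$ from \eqref{UB1}. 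A duality argument for the boundary norm would require wave-number-explicit regularity of the adjoint problem with only $L^2(\Gamma_1)$ data, which Lemma~\ref{thm:regularity_estimates} does not supply, and it is unclear it would reproduce the stated weights; so this part of your plan is at best incomplete.

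The genuine gap is \eqref{UB4}. You start from $\hone{u-\tildeu{h}}\le\hone{u-u_h}+\hone{u_h-\tildeu{h}}$ and bound $\hone{u-u_h}$ by $\energy{u-u_h}$, i.e.\ by \eqref{UB1}. This injects the pollution term $k^2h_\mT\,\eta_\mT$ into the bound for the projection error, and that term is \emph{not} dominated by the right-hand side of \eqref{UB4}: for instance at $h_\mT\eqsim k^{-3/(1+\al)}$ one has $k^2h_\mT\eqsim k^{(2\al-1)/(1+\al)}\to\infty$ (since $\al>\frac12$) while the bracket in \eqref{UB4} stays bounded. In particular, your version of \eqref{UB4} would destroy the key consequence \eqref{BO1}, namely $\hone{u-\tildeu{h}}\lesssim\eta_\mT$ whenever $\conditionT\ls 1$, on which the whole convergence and quasi-optimality analysis rests: the entire point of \eqref{UB4} is that the elliptic projection error, unlike $\hone{u-u_h}$, is \emph{not} polluted. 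Moreover, the weak-norm gains \eqref{Ph1}--\eqref{Ph2} are properties of projection errors $\phi-P_h\phi$; they do not apply to the discrete difference $u_h-\tildeu{h}$, so testing the difference of \eqref{eq:femhelm} and \eqref{eq:elliptic fem} with $u_h-\tildeu{h}$ only yields $\hone{u_h-\tildeu{h}}\lesssim k^2\twonorm{\rho}+k\bdnorm{\rho}$, which again misses the needed factors $h_\mT^{\al}$, $h_\mT^{\al/2}$. The paper avoids any detour through $\hone{u-u_h}$: it writes $\hone{\tilderho}^2=b(\tilderho,\tilderho)=b(\rho,\tilderho)=a(\rho,\tilderho-\mI_h\tilderho)+(k^2+1)(\rho,\tilderho)+\ci k\intbound{\rho,\tilderho}_{\Gamma_1}$, applies \eqref{Ph1}--\eqref{Ph2} to $\tilderho$ itself, and so only $\twonorm{\rho}$ and $\bdnorm{\rho}$ (controlled by \eqref{UB2}--\eqref{UB3}) enter, which is what produces precisely the factors $k^3h_\mT^{1+\al}$ and $k^{\frac32+\al}h_\mT^{2\al}$ in \eqref{UB4}.
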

\begin{proof} We first prove \eqref{UB2} by using the duality argument.
Let $w\in \Hgamma$ satisfy
\eqn{ a(v,w)=(v,\rho)\quad \forall v \in \Hgamma.}
Similar to Lemma~\ref{thm:regularity_estimates}, there holds the following decomposition:
\eqn{ w= w_R+\sum_{j=1}^J \tilde c^j_k s_j,}
where $w_R\in \Hgamma \cap H^2(\Omega) $, $ \tilde c^j_k\in \mathbb{C}$, $ s_j(x)=\chi_L(r_j)r_j^{\alpha_j}\sin(\alpha_j\theta_j)$, and it holds that
\begin{align*}
\|w_R\|_2 &\lesssim k \twonorm{\rho}\qtq{and} |\tilde c^j_k| \lesssim  k^{\alpha_j-\half}  \twonorm{\rho}, \quad j=1,...,J.
\end{align*}
Similar to \eqref{SZ4}, we have the following error estimate for the Scott-Zhang interpolation :
\eq{\label{SZw}
\twonorm{w-\mI_h w}_1\ls \big(kh_\mT+k^{-\half}(kh_\mT)^{\alpha}\big)\twonorm{\rho}.}
From \eqref{GO}, \eqref{form a}--\eqref{eq:weakhelm}, and integration by parts, we conclude that
\begin{align}
\twonorm{\rho}^2=&a(\rho, w)=a(\rho, w-\mI_hw) \notag\\
=&a(u, w-\mI_hw)-a(u_h, w-\mI_hw)\notag\\
=&(f, w-\mI_hw)+\intbound{g, w-\mI_hw}_{\Gamma_1} + k^2(u_h,w-\mI_hw)\notag\\
&-\sum_{T\in\mT}\int\nolimits_{\partial T}\frac{\partial u_h}{\partial n}(w-\mI_hw)+\ci k\int\nolimits_{\Gamma_1}u_h(w-\mI_hw)\notag\\
\label{rhoL2}=&\sum_{T\in\mT}\Big((R_T,w-\mI_hw)_T+\sum_{e\subset\pa T}\langle R_e,w-\mI_hw\rangle_e\Big).
\end{align}
Therefore, it follows from \eqref{SZ1} (with $v=w-\mI_hw$) and \eqref{SZw} that
\begin{align*}
\twonorm{\rho}^2 \lesssim & \sum_{T \in \mT} \big(h_T \twonorm{R_T}_T+h_T^\half \twonorm{R_e}_{\partial T}\big)
\twonorm{\nabla(w-\mI_h w)}_{\tilde T} \\
\ls& \eta_\mT\twonorm{\nabla(w-\mI_h w)}\\
               \lesssim&   \eta_T\big(kh_\mT+k^{-\half}(kh_\mT)^{\alpha}\big)\twonorm{\rho},
\end{align*}
which implies that \eqref{UB2} holds.

Next we prove \eqref{UB1}. Similar as above, and using \eqref{form a} and \eqref{GO}, we have
\begin{align*}
\twonorm{\nabla \rho}^2=&\re\big(a(\rho,\rho)\big)+k^2\twonorm{\rho}^2 =\re\big(a(\rho,\rho-\mI_h\rho)\big)+k^2\twonorm{\rho}^2\\
=&\re\sum_{T\in\mT}\Big((R_T,\rho-\mI_h\rho)_T+\sum_{e\subset\pa T}\langle R_e,\rho-\mI_h\rho\rangle_e\Big) +k^2\twonorm{\rho}^2      \\                                            \lesssim&\eta_{\mT}\twonorm{\nabla\rho}+k^2\twonorm{\rho}^2,
\end{align*}
which together with \eqref{UB2} implies that \eqref{UB1} holds.

\eqref{UB3} can be proved as follows
\eqn{
k \bdnorm{\rho}^2=-\im a(\rho, \rho)= -\im a(\rho, \rho-\mI_h\rho) \lesssim  \eta_\mT \twonorm{\nabla\rho} \lesssim  \big(1+k^2h_\mT+k^{\half}(kh_\mT)^{\alpha}\big) \eta_\mT^2.}

Next, we prove \eqref{UB4}. Using \eqref{GO}, \eqref{form a}, and Lemma~\ref{lem:elliptic projection estimate}, we derive that
\begin{align*}
\hone{\tilderho}^2=b(\tilderho,\tilderho)
                                        =&b(\tilderho,\tilderho)=b(\rho,\tilderho)\\
                                         =&a(\rho,\tilderho)+(k^2+1)(\rho,\tilderho)+\ci k \intbound{\rho, \tilderho}\\
                                         =&a(\rho,\tilderho-\mI_h\tilderho)+(k^2+1)(\rho,\tilderho)+\ci k \intbound{\rho, \tilderho}\\
                                         \lesssim&\eta_\mT\twonorm{\nabla\tilderho}+k^2h_\mT^\alpha\twonorm{\rho}\hone{\tilderho}
                                         +kh_\mT^\halfa \bdnorm{\rho}\hone{\tilderho}
\end{align*}
which together with \eqref{UB2}--\eqref{UB3} implies that
\[\hone{\tilderho} \lesssim \big(1+k^\half h_\mT^\halfa + k^3 h_\mT^{1+\alpha}+k^{\frac{3}{2}+\alpha} h_\mT^{2 \alpha}\big)\eta_\mT,\]
that is, \eqref{UB4} holds.

The proof of \eqref{UB5} is standard, we list it below for the reader's convenience:
\begin{align*}
\norm{\tilderho}_1^2=&b(\tilderho,\tilderho)=b(\tilderho, \tilderho-\mI_h\tilderho)\\
=&\sum_{T\in\mT}\Big((\tildeR_T,\tilderho-\mI_h\tilderho)_T+\sum_{e\subset\pa T}\intbound{\tildeR_e,\tilderho-\mI_h\tilderho}_e\Big)\\
\lesssim&\tildeeta_\mT\twonorm{\nabla\tilderho}.
\end{align*}
This completes the proof of the lemma.
\end{proof}

\begin{remark}\label{rUB}
{\rm (i)} The a posteriori error estimate \eqref{UB1} has a structure similar to the a priori error estimate in Lemma~\ref{lem:a priori}, i.e., it consists of the interpolation error $O\big(1+k^{\half}(kh_\mT)^{\alpha}\big) \eta_\mT$ and the pollution error $O(k^2h_\mT) \eta_\mT$, but does not require the mesh condition there.

{\rm (ii)} \eqref{UB4} says that the error of the elliptic projection of $u$ can be bounded by the error estimator for its finite element approximation. In particular,
\eq{\label{BO1} \twonorm{u-\tildeu{h}}_1 &\lesssim  \eta_\mT\qtq{if} \conditionT\ls 1. }

\end{remark}

\subsection{Lower bounds} In this subsection,
we  derive a posteriori lower bounds for the errors of $u_h$ and $\tildeu{h}$. As a byproduct, we prove the observation of Babu\v{s}ka,~et~al.~\cite{Babu1997A}.

\begin{lemma}[Lower bounds] \label{lem:lower bound}
There exist constants $\Clb$, $\Llb$ and $\tLlb$ independent of $k$ and $h_\mT$, such that if  $\conditionT \le \Clb$, then
\begin{align}
 \eta_\mT^2  \le \Llb(\hone{u-\tildeu{h}}^2+\osc_\mT^2),\label{LB1} \\
 \tildeeta_\mT^2 \le \tLlb(\hone{u-\tildeu{h}}^2+\osc_\mT^2).\label{LB2}
\end{align}
Meanwhile we have the local lower bound
\begin{equation}\label{lem:local_lowerbound}
\eta_T \lesssim \twonorm{\nabla (u-u_h)}_{\check{T}} +k\twonorm{u-u_h}_{\check{T}}+\osc_{\check{T}},
\end{equation}
where $\check{T}=T\cup\{T' \in \mT:\; T'\cap T \in \mE \}$.
\end{lemma}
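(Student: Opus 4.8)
The plan is to prove the three estimates in turn, using the standard bubble function technique of Verf\"urth adapted to the Helmholtz residuals, and then to combine the result for $\tildeeta_\mT$ with the a~posteriori upper bound \eqref{UB4} to establish \eqref{LB1}. I would begin with the \emph{local} lower bound \eqref{lem:local_lowerbound}, since the global bounds \eqref{LB1}--\eqref{LB2} will follow by squaring and summing it (together with a companion estimate for $\tildeeta_T$). For a fixed $T\in\mT$, let $b_T$ be the interior bubble function supported on $T$. Because $R_T = (f+k^2u_h)|_T$ and, on a linear element, $-\Delta u_h=0$, testing the residual equation with $b_T\overline{R_T}$ (or rather with $b_T$ times the $L^2$-projection $\overline{R_T}$ onto $P_1(T)$) and using inverse estimates yields the bound $h_T^2\|\overline{R_T}\|_T^2 \lesssim \|\nabla(u-u_h)\|_T^2 + k^4 h_T^2\|u-u_h\|_T^2 + h_T^2\|R_T-\overline{R_T}\|_T^2$; the oscillation term is absorbed into $\osc_T$. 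The jump terms $h_T\|R_{\partial T}\|_{\partial T}^2$ are treated with the edge bubble function $b_e$, extending $\overline{R_e}$ to a neighbourhood, and testing the residual equation with $b_e$ times that extension; this brings in $\|\nabla(u-u_h)\|_{\check T}$, $k\|u-u_h\|_{\check T}$ (on $\Gamma_1$ edges the impedance term contributes $k\|u-u_h\|_{\partial T\cap\Gamma_1}$, controlled by a trace inequality) and $h_T^{-1}h_T^2\|R_T\|_{\check T}^2$, the latter reduced to data oscillation by the element estimate just obtained. Collecting these gives \eqref{lem:local_lowerbound}.

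Next I would prove the global estimate \eqref{LB2} for $\tildeeta_\mT$. The argument is identical in structure but now applied to the \emph{elliptic} residuals $\tildeR_T=(f+(k^2+1)u-\tildeu{h})|_T$ and $\tildeR_e$, testing against the analogue of \eqref{eq:elliptic fem} written as $b(u-\tildeu{h},v)=(f+(k^2+1)u, v)+\intbound{\ci k u + g, v}_{\Gamma_1}-b(\tildeu{h},v)$ for all $v\in\Hgamma$ (valid since $\tilde u = u$). The key point is that this is a \emph{coercive} problem, so no wave-number-dependent factors degrade the estimate: testing with element and edge bubbles produces exactly $\tildeeta_T \lesssim \|u-\tildeu{h}\|_{H^1(\check T)} + \tildeosc_{\check T}$ with constants independent of $k$ and $h_\mT$. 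Squaring and summing over $T\in\mT$, and using finite overlap of the patches $\check T$, gives $\tildeeta_\mT^2\lesssim \|u-\tildeu{h}\|_1^2 + \tildeosc_\mT^2$; one then observes that $\tildeosc_T$ differs from $\osc_T$ only through terms involving $u-\tildeu{h}$ of higher order (indeed $\tildeR_T-R_T = (1+k^2)(u-\tildeu{h}) - (u_h-\tildeu{h})+\cdots$, wait—more carefully, $\tildeR_T$ and $R_T$ have different linear parts, but the oscillation removes the $P_1$ part, so $\tildeosc_T^2 \lesssim h_T^2\|f-\bar f\|_T^2 + h_T\|g-\bar g\|_{\partial T\cap\Gamma_1}^2 + (\text{osc.\ of } (k^2+1)u - \tildeu{h}$ vs its $P_1$-projection$)$, which is bounded by $\osc_\mT^2$ plus $h_\mT^{2}(1+k^2)^2|u|_{H^1}^2$-type terms absorbable into $\|u-\tildeu{h}\|_1^2$ after using that $P_1$-projection of the smooth-modulo-singular part is handled by \eqref{SZ2}--\eqref{SZ3}). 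This yields \eqref{LB2}.

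Finally, for \eqref{LB1} I would compare $\eta_\mT$ with $\tildeeta_\mT$. The element residuals satisfy $R_T-\tildeR_T = k^2 u_h - (k^2+1)u + \tildeu{h} = -(k^2+1)\tilderho + k^2(u_h-\tildeu{h}) + (u_h - \tildeu{h})$ on $T$, and the edge residuals satisfy $R_e - \tildeR_e = \ci k(u_h-\tildeu{h}) - \ci k\,\tilderho\cdot(\dots)$ — more directly $R_e-\tildeR_e = \ci k(u_h - u)$ on $\Gamma_1$ and $0$ on interior edges and on $\Gamma_0$. Hence, by the triangle inequality in the $\ell^2$-norm over $T\in\mT$,
\begin{align*}
\eta_\mT &\le \tildeeta_\mT + \Big(\sum_{T\in\mT}\big(h_T^2\|R_T-\tildeR_T\|_T^2 + h_T\|R_{\partial T}-\tildeR_{\partial T}\|_{\partial T}^2\big)\Big)^{1/2}\\
&\lesssim \tildeeta_\mT + (1+k^2)h_\mT\hone{\tilderho} + h_\mT\|u_h-\tildeu{h}\| + k^{3/2}h_\mT^{1/2}\|u-u_h\|_{\Gamma_1}\cdot(\dots),
\end{align*}
where the last cross terms are controlled using \eqref{UB2}, \eqref{UB3}, and \eqref{UB4}, together with $\|u_h-\tildeu{h}\|\le \|\rho\| + \|\tilderho\|$. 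Under the hypothesis $\conditionT\le\Clb$ with $\Clb$ small, all the prefactors $t(k,h_\mT)$-type quantities are bounded (in fact small), so $\eta_\mT \lesssim \tildeeta_\mT + \|u-\tildeu{h}\|_1 + \osc_\mT$; combining with \eqref{LB2} gives \eqref{LB1} with a constant $\Llb$ independent of $k,h_\mT$.

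I expect the \textbf{main obstacle} to be the bookkeeping in the last step: one must verify that every difference $\eta_\mT - \tildeeta_\mT$ and $\tildeosc_\mT - \osc_\mT$ is genuinely controlled by $\hone{u-\tildeu{h}}$ (times an absolute constant, under the smallness hypothesis) rather than merely by $\eta_\mT$ itself, which would make the estimate circular. This requires feeding \eqref{UB2}--\eqref{UB4} back in at the right places and checking that the resulting coefficients are all $\lesssim t(k,h_\mT)$, hence can be made $\le\frac12$ (say) by choosing $\Clb$ small — a standard but delicate absorption argument. The bubble-function estimates themselves are routine once the residual equations are written down, with the only Helmholtz-specific wrinkle being the $k^2\|u-u_h\|_T$ and $k\|u-u_h\|_{\partial T\cap\Gamma_1}$ terms, which are exactly why the local bound \eqref{lem:local_lowerbound} carries the factor $k$ in front of the $L^2$-term.
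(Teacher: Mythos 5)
Your plan breaks down at the step that is actually the heart of this lemma, namely \eqref{LB1}. First, a factual error: on an interior edge the two jump residuals are evaluated at \emph{different} discrete functions, $R_e(u_h)=-\frac12\jump{\nabla u_h}$ and $\tildeR_e(\tildeu{h})=-\frac12\jump{\nabla \tildeu{h}}$, so $R_e-\tildeR_e=-\frac12\jump{\nabla(u_h-\tildeu{h})}$, not $0$ as you claim (likewise the element residual difference is $-k^2\rho-\tilderho$, not $-(k^2+1)\rho$). This is not a bookkeeping slip that absorption can cure: the extra term your triangle inequality produces, $\bigl(\sum_T h_T\|\jump{\nabla(u_h-\tildeu{h})}\|_{\partial T}^2\bigr)^{1/2}$, is only controllable by $\|\nabla(u_h-\tildeu{h})\|$, which is of the size of the pollution error $\|\nabla(u-u_h)\|$; in the preasymptotic regime this is much larger than $\hone{u-\tildeu{h}}+\osc_\mT$ (that the estimator does \emph{not} see this quantity is precisely the content of the lemma and of Remark 3.3(i)), and the prefactors involved, e.g.\ $1+k^2h_\mT$, are not bounded under $\conditionT\le\Clb$. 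For the same reason your opening remark that the global bounds ``follow by squaring and summing'' the local bound \eqref{lem:local_lowerbound} is wrong: summing it yields $\eta_\mT\lesssim \energy{u-u_h}+\osc_\mT$, a bound in terms of the FE error rather than the elliptic-projection error.

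The paper avoids the comparison of the two jump residuals altogether. It proves \eqref{LB1} directly with bubble functions applied to the Helmholtz residual equation: for the interior bubble one uses $(\tau,\nabla\phi)_T=0$ for constant $\tau$, so $(\nabla\rho,\nabla\phi)_T=(\nabla\tilderho,\nabla\phi)_T$ and the discrete difference $u_h-\tildeu{h}$ never enters; for the edge bubbles one uses the Galerkin orthogonality $a(\rho,\phi)=a(\rho,\phi-P_h\phi)$ together with the identity $b(\rho,\phi-P_h\phi)=b(\tilderho,\phi)$ and the projection estimates \eqref{Ph1}--\eqref{Ph2}, so that the only residual Helmholtz-specific terms are $k^2h_\mT^\alpha\|\rho\|$ and $kh_\mT^{\alpha/2}\|\rho\|_{\Gamma_1}$, which are $\lesssim t(k,h_\mT)\eta_\mT$ by \eqref{UB2}--\eqref{UB3} and can be absorbed. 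Only then is \eqref{LB2} obtained, from the standard elliptic lower bound plus the oscillation comparison $|\tildeosc_\mT-\osc_\mT|\lesssim t(k,h_\mT)\eta_\mT$ (again via \eqref{UB2}--\eqref{UB3}) and the already-proved \eqref{LB1}; your attempt to absorb the oscillation difference directly into $\hone{u-\tildeu{h}}$ also fails, since it carries a factor $k^2h_\mT$ that is not bounded under the stated mesh condition. Your treatment of the local bound \eqref{lem:local_lowerbound} and of the purely elliptic estimate $\tildeeta_\mT^2\lesssim\hone{u-\tildeu{h}}^2+\tildeosc_\mT^2$ is fine, but without the orthogonality mechanism above the passage to \eqref{LB1} is a genuine gap.
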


\begin{proof}
Obviously,
\begin{equation}\label{eq:a  Rt Re}
 a(\rho,\phi)=\sum_{T\in\mT}\big((R_T,\phi)_T + \intbound{R_{\pa T},\phi}_{\pa T}\big)
                               \quad \forall \phi \in \Hgamma.
\end{equation}
From \cite[Theorem 3.3]{posteriori}, there exists a bubble function $\varphi_T\ge 0$, which is supported in $T$ and satisfies
\begin{equation}\label{eq:phiT}
\begin{gathered}
\twonorm{\bar{R}_T}_T^2 \eqsim \int_T\varphi_T|\bar{R}_T|^2 \qtq{and}
\twonorm{\varphi_T\bar{R}_T}_T+h_T|\varphi_T\bar{R}_T|_{H^1(T)}\ls \twonorm{\bar{R}_T}_T.
\end{gathered}
\end{equation}
Taking $\phi=\varphi_T\bar{R}_T$ in \eqref{eq:a  Rt Re}, we have $(\tau, \nabla \phi)=0 \ \forall \tau\in (P_0)^d$, it follows from \eqref{eq:phiT}  that
\begin{align*}
\twonorm{\bar{R}_T}_T^2 \lesssim& (\bar{R}_T,\phi)_T=(R_T,\phi)_T+(\bar{R}_T-R_T,\phi)_T\\
 =&a(\rho,\phi)+(\bar{R}_T-R_T,\phi)_T\\
=&(\nabla\tilderho,\nabla\phi)_T-k^2(\rho,\phi)_T+(\bar{R}_T-R_T,\phi)_T\\
\lesssim&\twonorm{\nabla\tilderho}_T\twonorm{\nabla\phi}_T+k^2\twonorm{\rho}_T\twonorm{\phi}_T +\twonorm{\bar{R}_T-R_T}_T\twonorm{\phi}_T\\
\lesssim& h^{-1}_T\twonorm{\nabla\tilderho}_T\twonorm{\bar{R}_T}_T
+k^2\twonorm{\rho}_T\twonorm{\bar{R}_T}_T
+\twonorm{\bar{R}_T-R_T}_T\twonorm{\bar{R}_T}_T,                                                                  \end{align*}
which implies that
\[ h_T\twonorm{\bar{R}_T}_T \lesssim \twonorm{\nabla\tilderho}_T+k^2h_T\twonorm{\rho}_T
                                                                          +h_T\twonorm{\bar{R}_T-R_T}_T.\]
Using triangle inequality, we have
\begin{equation}\label{eq:Rt bound}
h_T\twonorm{R_T}_T \lesssim \twonorm{\nabla\tilderho}_T+k^2h_T\twonorm{\rho}_T
                                                                          +h_T\twonorm{\bar{R}_T-R_T}_T.
\end{equation}

From \cite[Theorem 3.5]{posteriori}, there exist an edge bubble function $\chi_e$, which is supported in $\Om_e$ and satisfies
\begin{equation}\label{eq:chie}
\begin{gathered}
\int_e\chi_e|\bar R_e|^2 \eqsim \twonorm{\bar R_e}_e^2\\
h_T^{-1/2}\twonorm{ \chi_e \bar R_e}_T+h_T^{1/2}|\chi_e \bar R_e|_{H^1(T)}\lesssim \twonorm{\bar R_e}_e,\quad \forall T \subset \Omega_e.
\end{gathered}
\end{equation}

Let $\phi=\sum_{e\in \mE}h_e\chi_e \bar R_e$ in \eqref{eq:a  Rt Re}. From \eqref{GO},  \eqref{form a}, and Lemma~\ref{lem:elliptic projection estimate}, we conclude that
\begin{align*}
a(\rho,\phi)=&a(\rho,\phi-P_h\phi)\\
                     =&b(\rho,\phi-P_h\phi)-(k^2+1)(\rho,\phi-P_h\phi)
                             -\ci k \intbound{\rho,\phi-P_h\phi}_{\Gamma_1}\\
                           =&b(\tilderho,\phi)-(k^2+1)(\rho,\phi-P_h\phi) -\ci k \intbound{\rho,\phi-P_h\phi}_{\Gamma_1}\\
                           \lesssim &\hone{\tilderho}\hone{\phi}+k^2h_\mT^\alpha\twonorm{\rho}\cdot\hone{\phi}
                           +kh_\mT^\halfa\bdnorm{\rho}\hone{\phi}.
\end{align*}
From Lemma \ref{lem:upper bound}, we find that $k^2h_\mT^\alpha\twonorm{\rho}+kh_\mT^\halfa\bdnorm{\rho}\lesssim t(k,h_\mT)\eta_\mT.$
Therefore, from \eqref{eq:a  Rt Re} and \eqref{eq:chie}, we have
\eqn{
\sum_{T\in\mT}\sum_{e\subset\pa T}&\intbound{R_e, \phi}_e
         =a(\rho,\phi)- \sum_{T\in\mT} \Big(R_T, \sum_{e\subset \partial T} h_e\chi_e \bar R_e\Big)_T\\
\lesssim& \hone{\tilderho}\hone{\phi}+k^2h_\mT^\alpha\twonorm{\rho}\cdot\hone{\phi}
                           +kh_\mT^\halfa\bdnorm{\rho}\hone{\phi}\\
                     &+\Big(\sum_{T\in \mT} h_T^2 \twonorm{R_T}_T^2\Big)^\half \Big(\sum_{T\in \mT} h_T \twonorm{\bar R_{\pa T}}_{\pa T}^2\Big)^\half \\
\lesssim&  \big(\hone{\tilderho}+t(k,h_\mT)\eta_\mT\big)\hone{\phi} +\Big(\sum_{T\in\mT}h_T^2\twonorm{R_T}^2_T\Big)^\half
            \Big(\sum_{T\in \mT} h_T \twonorm{ \bar R_{\pa T}}_{\pa T}^2\Big)^\half \\
          \lesssim &\Big(\hone{\tilderho}+t(k,h_\mT)\eta_\mT+\Big(\sum_{T\in\mT}h_T^2\twonorm{R_T}^2_T\Big)^\half\Big)
             \Big(\sum_{T\in \mT} h_T\twonorm{\bar R_{\pa T}}_{\pa T}^2\Big)^\half,
}
where we have used $\hone{\phi}^2\lesssim \sum_{e\in \mE}h_e^2\twonorm{\chi_e \bar R_e}^2_{H^1(\Omega_e)}
\lesssim\sum_{T\in \mT}h_T\twonorm{R_{\pa T}}_{\pa T}^2$ to derive the last inequality.
Thus
\eqn{
\sum_{T\in\mT}h_T\twonorm{\bar R_{\pa T}}_{\pa T}^2
          \eqsim&\sum_{T\in\mT}\sum_{e\subset\pa T}\intbound{\bar R_e, \phi}_e=
         \sum_{T\in\mT}\sum_{e\subset\pa T}\intbound{R_e, \phi}_e+
         \sum_{T\in\mT}\sum_{e\subset\pa T}\intbound{\bar R_e-R_e, \phi}_e\\
          \lesssim &\Big(\hone{\tilderho}+t(k,h_\mT)\eta_\mT+\Big(\sum_{T\in\mT}h_T^2\twonorm{R_T}^2_T\Big)^\half\Big)
             \Big(\sum_{T\in \mT} h_T\twonorm{\bar R_{\pa T}}_{\pa T}^2\Big)^\half \\
             &+\Big(\sum_{T\in \mT} h_T\twonorm{R_{\pa T}-\bar R_{\pa T}}_{\pa T}^2\Big)^\half
               \Big(\sum_{T\in \mT} h_T\twonorm{\bar R_{\pa T}}_{\pa T}^2\Big)^\half.
}
By plugging \eqref{eq:Rt bound} into the above estimate and using \eqref{UB2}, triangle inequality, we conclude that
\begin{equation}\label{eq:Re bound}
\sum_{T\in\mT}h_T\twonorm{R_{\pa T}}_{\partial T}^2\lesssim \hone{\tilderho}^2+t^2(k,h_\mT)\eta^2_\mT+\osc^2_\mT.
\end{equation}
Combining \eqref{eq:Rt bound} with \eqref{eq:Re bound}, we obtain
\[ \eta^2_\mT\lesssim \hone{\tilderho}^2+t^2(k,h_\mT)\eta^2_\mT+\osc^2_\mT.\]
which implies \eqref{LB1} if $\conditionT$ is sufficiently small.

Similarly we can get the local lower bound \eqref{lem:local_lowerbound} by taking $\phi=\varphi_T\bar{R}_T$ and $\phi=h_e\chi_e \bar R_e$ in \eqref{eq:a  Rt Re}, respectively. We omit the details.

Next we prove \eqref{LB2}. From the standard lower bound estimate for elliptic problems (cf. \cite{dataoscillation,convergenceAFEM,quasi-optimal}), we have
\eq{\label{LB3}
 \tildeeta_\mT^2 \ls\hone{u-\tildeu{h}}^2+\tildeosc_\mT^2.
}
It follows from \eqref{RT}--\eqref{tRT} that
\eq{\label{oscosc}|\tildeosc_T-\osc_T|^2
\ls &h^2_T \big\| \tildeR_T(\tildeu{h})-R_T(u_h)-\overline{\tildeR_T(\tildeu{h})-R_T(u_h)} \big\|^2_T\notag\\
&+ h_T \big\|\tildeR_{\pa T}(\tildeu{h})-R_{\pa T}(u_h)-\overline{\tildeR_{\pa T}(\tildeu{h})-R_{\pa T}(u_h)}\big\|^2_{\partial T}\notag\\
\ls &h^2_T \|(k^2+1)\rho\|^2_T+ h_T \twonorm{\ci k\rho}_{\pa T\cap\Ga_1}^2.}
Therefore, from \eqref{M1}--\eqref{M2} and \eqref{UB2}--\eqref{UB3}, we have
\eq{\label{oscosc2}|\tildeosc_\mT - \osc_\mT| &\le \bigg(\sum_{T \in \mT} |\tildeosc_T-\osc_T|^2\bigg)^\half  \notag\\
        &\ls k^2h_\mT \|\rho\|+ kh^\half_\mT \twonorm{\rho}_{\Ga_1} \notag\\
       &\ls t(k,h_\mT)\eta_\mT\ls \eta_\mT,}
Consequently, $\tildeosc_\mT^2\ls \osc_\mT^2+\eta_\mT^2$, which together with \eqref{LB3} and \eqref{LB1} implies that \eqref{LB2} holds. This completes the proof of the lemma.
\end{proof}

\begin{remark}\label{rLB}
{\rm (i)} As a consequence of Lemma~{\rm \ref{lem:upper bound}} (see Remark~{\rm \ref{rUB}(ii)}) and Lemma~\ref{lem:lower bound}, we have proved that under the condition $\conditionT \le \Clb$,  $\eta_\mT\eqsim\norm{u-\tildeu{h}}_1$ up to an oscillation term $\osc_\mT$ which is of higher order if $f$ and $g$ are both piecewise smooth. In other words, the error estimator for the finite element solution $u_h$ for the Helmholtz problem is a good estimate for the error of the elliptic projection of the exact solution $u$, and hence seriously underestimate the error of $u_h$ in the preasymptotic regime. This phenomenon was observed by Babu\v{s}ka,~et~al.~\cite{Babu1997A} for one dimensional problems but no rigorous proof was given there.

{\rm (ii)} The pollution may be reduced by modifying the FEM, e.g., generalized FEM \cite{bips95,Melenk_phd,bs00}, IPDG methods \cite{fw09,hpDG}, CIP-FEM \cite{Du2015Preasymptotic,wu2014,zw2013,ACIPforhelmholtz}, etc. In \S~\ref{s:num}, we will show that by properly selecting the penalty parameter, the adaptive CIP-FEM can reduce greatly the pollution error and hence the residual type a posterior error estimator is good estimate of the true error of the CIP-FE solution even in the preasymptotic regime.

{\rm (iii)} For a posteriori error estimates for the $hp$-FEM in the asymptotic regime, we refer to \cite{ds13}.
\end{remark}
\subsection{Equivalent relationship between two error estimators}
In this subsection, we prove some equivalent relationship between the error estimators for $u_h$ and those for $\tildeu{h}$, which is crucial for the further analysis.

For any submesh $\mM\subset \mT$, let
\eq{\label{tildeM}\tildeM=\{ T \in \mT:\; \exists A \in \mM \text{ such that } A\cap   T \in \mE \}}
 be the set of elements which have common edges with elements in $\mM$. Denote by $\mathcal{F}$ the set of all edges in $\mM$.

The following lemma says that, up to some higher order terms, the error estimator for $u_h$ on $\mM$ is bounded above by the error estimator for $\tildeu{h}$ on a slightly larger submesh $\tildeM$, and vice versa.
\begin{lemma}[The relation between two kinds of estimators]\label{lem:relation between estimators}
There exist $C_1$ and $C_2$ satisfying $C_1C_2 \ge 1$, such that the following estimates hold,
\eq{ \eta_{\mM} &\le C_1\left(  \widetilde{\eta}_{\tildeM} +t(k,h_\mT)\eta_{\mT}\right),\label{etaM1}\\
 \tildeeta_{\mM}& \le C_2\left(  \eta_{\tildeM} +t(k,h_\mT)\eta_{\mT}\right)\label{etaM2}}
\end{lemma}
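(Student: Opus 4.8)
The plan is to compare the two estimators element-by-element and then sum. Fix $T\in\mT$. The only differences between $\eta_T=\eta_T(u_h)$ and $\tildeeta_T=\tildeeta_T(\tildeu{h})$ come from (a) the element residuals, where $R_T(u_h)=f+k^2u_h$ while $\tildeR_T(\tildeu{h})=f+(k^2+1)u-\tildeu{h}$, so their difference is $(k^2+1)u-\tildeu{h}-k^2u_h=(k^2+1)\tilderho-(u_h-\tildeu{h})\cdot 0$; more precisely $R_T(u_h)-\tildeR_T(\tildeu{h})=k^2u_h-(k^2+1)u+\tildeu{h}=-(k^2+1)\rho+k^2(u_h-u)+\dots$, which I will organize as a bounded combination of $\rho=u-u_h$ and $u-\tildeu{h}$ together with $k^2$-weighted $L^2$ terms; and (b) the boundary residuals on $\Gamma_1$, where $R_e(u_h)-\tildeR_e(\tildeu{h})$ differs by $\ci k(u_h-u)$ plus the jump terms $-\tfrac12\jump{\nabla(u_h-\tildeu{h})}$ on interior edges. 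So the first step is the bookkeeping identity bounding $|\eta_T-\tildeeta_T|$ by $h_T\|(k^2+1)\rho\|_T$, $h_T^{1/2}\|k\rho\|_{\pa T\cap\Gamma_1}$, and the jump contributions $h_T^{1/2}\|\jump{\nabla(u_h-\tildeu{h})}\|_{\pa T\cap\Omega}$.

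The second step controls the jump term $u_h-\tildeu{h}$. Since both $u_h,\tildeu{h}\in\VT$, the gradients are piecewise constant; by an inverse-type estimate the edge jumps are comparable to a bulk quantity, but the cleaner route is $u_h-\tildeu{h}=\rho-\tilderho$ nested in the estimate for $\tildeeta_\mM$ itself: on interior edges $-\tfrac12\jump{\nabla(u_h-\tildeu{h})}=R_e(u_h)-\tildeR_e(\tildeu{h})$ already, so when forming $\eta_\mM$ versus $\tildeeta_{\tildeM}$ I should instead directly compare $\eta_\mM$ with $\tildeeta_\mM$ plus a correction, and only enlarge to $\tildeM$ where an element of $\mM$ shares an edge with a neighbour. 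Concretely, write $\eta_T^2\le 2\tildeeta_T^2+2(\eta_T-\tildeeta_T)^2$ is too lossy on the jumps; instead I treat $R_{\pa T}(u_h)$ on an interior edge $e=T\cap T'$ and use $R_e(u_h)=\tildeR_e(\tildeu{h})+\text{(difference supported near }e)$ — but the interior jumps of $u_h$ and of $\tildeu{h}$ are genuinely different objects. The correct mechanism is: bound $h_e^{1/2}\|\jump{\nabla u_h}\|_e$ using the \emph{local lower bound} \eqref{lem:local_lowerbound}, giving $\eta_T\lesssim \|\nabla\rho\|_{\check T}+k\|\rho\|_{\check T}+\osc_{\check T}$, and separately bound $\tildeeta$ on $\tildeM$ from below by the error of $\tildeu{h}$ on the same patch via the analogous elliptic local lower bound; then convert $\|\nabla\rho\|+k\|\rho\|$ into $\|\tilderho\|_1$ plus higher-order terms using the upper bounds \eqref{UB2}–\eqref{UB3} and the identity $\rho=\tilderho+(u_h-\tildeu{h})$ together with \eqref{UB4}. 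This is where the factor $t(k,h_\mT)$ enters: all the discrepancies ($k^2h_\mT^\alpha\|\rho\|$, $kh_\mT^{\alpha/2}\bdnorm{\rho}$, etc.) are exactly the terms already collected in $t(k,h_\mT)\eta_\mT$ in Lemma~\ref{lem:lower bound}.

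For \eqref{etaM2} the argument is symmetric: $\tildeeta_\mM$ is controlled by the elliptic local lower bound in terms of $\|u-\tildeu{h}\|_{1,\tildeM}$ plus $\tildeosc_{\tildeM}$, and then $\tildeosc$ is replaced by $\osc$ up to $t(k,h_\mT)\eta_\mT$ exactly as in \eqref{oscosc}–\eqref{oscosc2}, while $\|u-\tildeu{h}\|_{1,\tildeM}$ on the \emph{patch} is in turn estimated by the efficiency of $\eta$ on $\tildeM$ (the local version of \eqref{LB1}) plus higher-order corrections. The requirement $C_1C_2\ge1$ is automatic once both constants are defined as products of generic constants, since iterating the two inequalities with $t(k,h_\mT)$ small must be consistent with $\eta_\mM\le\eta_\mT$ trivially. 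The main obstacle I anticipate is the \emph{locality} of the jump comparison: the raw estimators $\eta_T$ and $\tildeeta_T$ differ on interior edges by the jump of $\nabla(u_h-\tildeu{h})$, which is not pointwise small, so one cannot compare them edgewise; the resolution is to pass through the two local lower bounds (elliptic and Helmholtz) on the enlarged patch $\tildeM$, accepting the patch enlargement as the price, and to absorb every cross term into $t(k,h_\mT)\eta_\mT$ via the already-proven upper bounds — bookkeeping which neighbour contributes to which patch is the fiddly part but is routine.
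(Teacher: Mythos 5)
There is a genuine gap at the heart of your plan. You correctly identify that the interior jump residuals of $u_h$ and $\tildeu{h}$ cannot be compared edgewise, but your proposed fix — pass through the local lower bound \eqref{lem:local_lowerbound} to get $\eta_\mM\lesssim\twonorm{\nabla\rho}_{\omega}+k\twonorm{\rho}_{\omega}+\osc$ on the patch $\omega$, and then ``convert'' the local error into the estimator $\tildeeta_{\tildeM}$ of the other problem on a slightly larger patch — requires a \emph{local reliability} bound (error on a patch controlled by the estimator on a slightly larger patch). No such bound exists for residual-type estimators: both for the Helmholtz FE solution and for the elliptic projection, only global upper bounds \eqref{UB4}--\eqref{UB5} are available, and efficiency (the local lower bound you invoke, and its elliptic analogue) goes in the opposite direction. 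Replacing the local conversion by the global upper bounds would yield $\eta_\mM\lesssim\tildeeta_{\mT}+\dots$, losing exactly the localization to $\tildeM$ that \eqref{etaM1}--\eqref{etaM2} assert and that Corollary~\ref{cor:dorfler} needs. There is also a quantitative obstruction: your route produces the term $k\twonorm{\rho}_{\omega}$, and by \eqref{UB2} one only gets $k\twonorm{\rho}\lesssim\big(k^2h_\mT+k^{\half}(kh_\mT)^{\alpha}\big)\eta_\mT$; the factor $k^2h_\mT$ is \emph{not} dominated by $t(k,h_\mT)$ in the preasymptotic regime (under $k^3h_\mT^{1+\alpha}\ls1$ one can have $k^2h_\mT\to\infty$), so the discrepancy cannot be absorbed into $t(k,h_\mT)\eta_\mT$ this way — you would reintroduce the pollution term.

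The paper's proof avoids both problems by never passing through the local error. It starts from the identity \eqref{eq:Rt Re tildeRt tildeRe}, obtained from $a(\rho,\phi)=a(\rho,\phi-P_h\phi)=b(\tilderho,\phi)-(k^2+1)(\rho,\phi-P_h\phi)-\ci k\intbound{\rho,\phi-P_h\phi}_{\Gamma_1}$, and tests it with bubble functions supported on $\mM$: element bubbles $\phi=\sum_{T\in\mM}h_T^2\varphi_T\bar R_T$ and edge bubbles $\phi=\sum_{e\in\mF}h_e\chi_e\bar R_e$. The $\tildeR$--terms are then paired directly with these bubbles and estimated by Cauchy--Schwarz, which is where $\tildeeta_{\tildeM}$ appears (the enlargement to $\tildeM$ comes only from the supports $\Om_e$ of the edge bubbles), while the discrepancy terms carry the factor $\phi-P_h\phi$, so Lemma~\ref{lem:elliptic projection estimate} supplies the extra weights $h_\mT^{\alpha}$ and $h_\mT^{\alpha/2}$; it is precisely $k^2h_\mT^\alpha\twonorm{\rho}+kh_\mT^{\alpha/2}\bdnorm{\rho}$, not $k\twonorm{\rho}$, that is absorbed into $t(k,h_\mT)\eta_\mT$ via \eqref{UB2}--\eqref{UB3}. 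Your bookkeeping of the element-residual difference and of the oscillation comparison \eqref{oscosc}--\eqref{oscosc2} is fine, but without the bubble-function mechanism applied to \eqref{eq:Rt Re tildeRt tildeRe} the argument as proposed does not close.
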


\begin{proof}
From \eqref{GO}, \eqref{form a}, and \eqref{form b}, we have
\begin{align*}
a(\rho,\phi)=&a(\rho,\phi-P_h\phi)\\
                           =&b(\rho,\phi-P_h\phi)-(k^2+1)(\rho,\phi-P_h\phi)-\ci k\intbound{\rho,\phi-P_h\phi}_{\Gamma_1}\\
                           =&b(\tilderho,\phi)-(k^2+1)(\rho,\phi-P_h\phi)-\ci k\intbound{\rho,\phi-P_h\phi}_{\Gamma_1}.
\end{align*}
Similar to \eqref{eq:a  Rt Re}, we have
\eqn{
 b(\tilderho,\phi)=\sum_{T\in\mT}\big((\tildeR_T,\phi)_T + \intbound{\tildeR_{\pa T},\phi}_{\pa T}\big)
                               \quad \forall \phi \in \Hgamma.
}
Therefore, by combing \eqref{eq:a  Rt Re} with the above two identities, we obtain
\begin{equation}\label{eq:Rt Re tildeRt tildeRe}
\begin{aligned}
\sum_{T\in\mT}\big((R_T,\phi)_T + \intbound{R_{\pa T},\phi}_{\pa T}\big)=&\sum_{T\in\mT}\big((\tildeR_T,\phi)_T + \intbound{\tildeR_{\pa T},\phi}_{\pa T}\big)\\
                                        &-(k^2+1)(\rho,\phi-P_h\phi)-\ci k\intbound{\rho,\phi-P_h\phi}_{\Gamma_1}.
\end{aligned}
\end{equation}

Similar to the proof of Lemma~\ref{lem:lower bound}, let $\phi=\sum_{T\in\mM}h^2_T\varphi_T\bar{R}_T$ in \eqref{eq:Rt Re tildeRt tildeRe}, we have
\[ \sum_{T\in\mM}(R_T,\phi)_T=\sum_{T\in\mM}(\tildeR_T,\phi)_T -(k^2+1)(\rho,\phi-P_h\phi)-\ci k\intbound{\rho,\phi-P_h\phi}_{\Gamma_1} \]
Then
\begin{align*}
\sum_{T\in\mM}h^2_T(\bar{R}_T,\phi_T\bar{R}_T)_T=&\sum_{T\in\mM}h^2_T(R_T,\phi_T\bar{R}_T)_T
                                         +\sum_{T\in\mM}h^2_T(\bar{R}_T-R_T,\phi_T\bar{R}_T)_T\db\\
                                         =&\sum_{T\in\mM}h^2_T(\tildeR_T,\phi_T\bar{R}_T)_T -(k^2+1)(\rho,\phi-P_h\phi)\db\\
                                         &-\ci k\intbound{\rho,\phi-P_h\phi}_{\Gamma_1}                                       +\sum_{T\in\mM}h^2_T(\bar{R}_T-R_T,\phi_T\bar{R}_T)_T\db\\
                                         \lesssim&\sum_{T\in\mM}h^2_T\twonorm{\tildeR_T}_T\twonorm{\bar{R}_T}_T
                                         +k^2h_\mT^\alpha\twonorm{\rho}\hone{\phi}+kh_\mT^\halfa \bdnorm{\rho}\hone{\phi}\\
                                         &+\sum_{T\in\mM}h_T^2\twonorm{R_T-\bar R_T}_T\twonorm{\bar{R}_T}_T,
\end{align*}
where we have used Lemma~\ref{lem:elliptic projection estimate} to derive the last inequality.
Noting from \eqref{eq:phiT} that $\hone{\phi}^2\lesssim \sum_{T\in\mM}h^4_T\twonorm{\phi_T\bar{R}_T}^2_{H^1(T)}\lesssim \sum_{T\in\mM}h^2_T\twonorm{\bar{R}_T}^2_T$  and using Lemma~\ref{lem:upper bound}, we  have
\begin{align*}
\sum_{T\in\mM}h^2_T(\bar{R}_T,\phi_T\bar{R}_T)_T\lesssim& \Big(\sum_{T\in\mM}h^2_T\twonorm{\tildeR_T}^2_T\Big)^\half\Big(\sum_{T\in\mM}h^2_T\twonorm{\bar{R}_T}^2_T\Big)^\half\\
&+t(k,h_\mT)\eta_\mT\Big(\sum_{T\in\mM}h^2_T\twonorm{\bar{R}_T}^2_T\Big)^\half\\
&+\osc_\mM\Big(\sum_{T\in\mM}h^2_T\twonorm{\bar{R}_T}^2_T\Big)^\half,
\end{align*}
thus
\eq{\label{eq:Rt tildeRt} \sum_{T\in\mM}h^2_T\twonorm{\bar{R}_T}_T^2\lesssim& \sum_{T\in\mM}h^2_T\twonorm{\tildeR_T}^2_T+t^2(k,h_\mT)\eta_\mT^2+\osc_\mM^2.}

Let $\phi=\sum_{e\in \mF}h_e\chi_e \bar R_e$ in \eqref{eq:Rt Re tildeRt tildeRe}. From \eqref{eq:chie} and Lemma~\ref{lem:elliptic projection estimate}, we have
\begin{align*}
\sum_{T\in\tildeM}\sum_{\substack{e\subset\pa T\\e\in\mF}}\intbound{R_e, \phi}_e=&
            - \sum_{T\in\tildeM} (R_T, \phi)_T
              +\sum_{T \in \tildeM}\big((\tildeR_T,\phi)_T
               +\intbound{\tildeR_{\pa T},\phi}_{\partial T}\big)\\
           &-(k^2+1)(\rho,\phi-P_h\phi)-\ci k \intbound{\rho, \phi-P_h\phi}_{\Gamma_1}\\
           \lesssim&\Big(\sum_{T\in \tildeM}h^2_T\twonorm{R_T}^2_T\Big)^\half
                        \Big(\sum_{T\in\mM} h_T\twonorm{ \bar R_{\pa T}}_{\partial T}^2\Big)^\half\\
                           &+\Big(\sum_{T\in \tildeM}h^2_T\twonorm{\tildeR_T}^2_T\Big)^{1/2}
                            \Big(\sum_{T\in\mM} h_T\twonorm{\bar R_{\pa T}}_{\partial T}^2\Big)^\half\\
                           &+\Big(\sum_{T\in\mM}h_T\twonorm{\tildeR_{\pa T}}_{\partial T}^2\Big)^{1/2}
                             \Big(\sum_{T\in\mM}h_T\twonorm{ \bar R_{\pa T}}_{\partial T}^2\Big)^{1/2}\\
                           &+k^2h_\mT^\alpha \twonorm{\rho}\hone{\phi}+kh_\mT^\halfa \bdnorm{\rho}\hone{\phi}.
\end{align*}
Noting that \eqref{eq:Rt tildeRt} also hold for $\tildeM$ and
 \[\hone{\phi}^2\lesssim \sum_{e\in \mF}h_e^2\twonorm{\chi_e \bar R _e}_{H^1(\Omega_e)}^2\lesssim \sum_{T\in\mM} h_T \twonorm{ \bar R_{\pa T}}_{\partial T}^2 ,\]
we derive that
\begin{align*}
 \sum_{T\in\mM} h_T \twonorm{\bar R_{\pa T}}_{\partial T}^2
 \eqsim&
    \sum_{T\in\tildeM}\sum_{\substack{e\subset\pa T\\e\in\mF}}\intbound{\bar R_e, h_e\chi_e \bar  R_e}_e \\
 =&
    \sum_{T\in\tildeM}\sum_{\substack{e\subset\pa T\\e\in\mF}}\intbound{R_e, \phi}_e +
    \sum_{T\in\tildeM}\sum_{\substack{e\subset\pa T\\e\in\mF}}\intbound{\bar R_e-R_e, \phi}_e \\
 \ls&
 \sum_{T\in\tildeM}\big(h^2_T\twonorm{R_T}^2_T+h^2_T\twonorm{\tildeR_T}_T^2
                      + h_T\twonorm{\tildeR_{\pa T}}_{\partial T}^2\big)\\
                      &+t^2(k,h_\mT)\eta^2_\mT  + \osc_{\tildeM}^2\\
 \ls&\sum_{T\in\tildeM}\big(h^2_T\twonorm{\tildeR_T}_T^2
                      + h_T\twonorm{\tildeR_{\pa T}}_{\partial T}^2\big) \\
                      &+t^2(k,h_\mT)\eta^2_\mT                     +\osc_{\tildeM}^2.
\end{align*}
Using triangle inequality, we have
\eqn{
\sum_{T\in\mM} h_T \twonorm{R_{\pa T}}_{\partial T}^2\ls&
      \sum_{T\in\tildeM}\big(h^2_T\twonorm{\tildeR_T}_T^2
                      + h_T\twonorm{\tildeR_{\pa T}}_{\partial T}^2\big) \\
                      &+t^2(k,h_\mT)\eta^2_\mT                     +\osc_{\tildeM}^2,}
which together with \eqref{eq:Rt tildeRt} implies that
\eq{\label{etaM3} \eta_{\mM} \lesssim   \tildeeta_{\tildeM} +t(k,h_\mT)\eta_\mT
                                 +\osc_{\tildeM}.}
Similarly, we can obtain,
\eq{\label{etaM4} \tildeeta_{\mM}\lesssim   \eta_{\tildeM} +t(k,h_\mT)\eta_\mT
                                 +\tildeosc_{\tildeM}.}
On the other hand, we have
\eqn{\osc_{\tildeM} \le \eta_{\tildeM}, \quad
\tildeosc_{\tildeM}\le \tildeeta_{\tildeM},}
which together with \eqref{oscosc2} and \eqref{etaM3}--\eqref{etaM4} implies that \eqref{etaM1}--\eqref{etaM2} hold. This completes the proof of the lemma.
\end{proof}

We can get the following corollary by replacing $\mM$ by $\mT$ in lemma \ref{lem:relation between estimators}, which says that the two error estimators are equivalent on $\mT$ if $\conditionT$ is sufficiently small.
\begin{corollary}\label{cor:equivalent of eta}
Suppose $t(k,h_\mT)\le \frac{1}{2C_1}$. Then  $\eta_\mT \eqsim  \widetilde{\eta}_\mT$.
\end{corollary}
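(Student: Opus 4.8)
The plan is to invoke Lemma~\ref{lem:relation between estimators} with the trivial choice $\mM=\mT$, for which $\tildeM=\mT$ as well, so that the two bounds \eqref{etaM1}--\eqref{etaM2} collapse to $\eta_\mT\le C_1\big(\tildeeta_\mT+t(k,h_\mT)\eta_\mT\big)$ and $\tildeeta_\mT\le C_2\big(\eta_\mT+t(k,h_\mT)\eta_\mT\big)$. The point of the hypothesis $t(k,h_\mT)\le\frac1{2C_1}$ is precisely to absorb the $t(k,h_\mT)\eta_\mT$ term on the right-hand side of the first estimate into its left-hand side.

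Concretely, from $\eta_\mT\le C_1\tildeeta_\mT+C_1 t(k,h_\mT)\eta_\mT$ and $C_1 t(k,h_\mT)\le\frac12$ we get $\frac12\eta_\mT\le C_1\tildeeta_\mT$, i.e. $\eta_\mT\le 2C_1\tildeeta_\mT$, which gives $\eta_\mT\lesssim\tildeeta_\mT$ with constant independent of $k$ and $h_\mT$. For the reverse inequality, plug this back into the second estimate: $\tildeeta_\mT\le C_2\eta_\mT+C_2 t(k,h_\mT)\eta_\mT\le C_2(1+t(k,h_\mT))\eta_\mT\le C_2(1+\frac1{2C_1})\eta_\mT$, so $\tildeeta_\mT\lesssim\eta_\mT$ as well. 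Combining the two directions yields $\eta_\mT\eqsim\tildeeta_\mT$.

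There is essentially no obstacle here; the corollary is a one-line consequence of the lemma once $\mM=\mT$ is substituted, and the only thing to be careful about is that the constant $C_1$ appearing in the smallness condition $t(k,h_\mT)\le\frac1{2C_1}$ is exactly the constant from \eqref{etaM1}, so the absorption step is legitimate. One might also remark that, since $t(k,h_\mT)$ can be made as small as desired by taking $k^3h_\mT^{1+\alpha}$ small enough (as noted right after the definition \eqref{eq:t}), the hypothesis of the corollary is automatically satisfied under the standing preasymptotic mesh condition; but strictly speaking that observation is not needed for the proof of the equivalence itself.
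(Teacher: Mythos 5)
Your proof is correct and is exactly the paper's intended argument: the corollary is obtained by taking $\mM=\mT$ in Lemma~\ref{lem:relation between estimators} (so that $\tildeeta_{\tildeM}\le\tildeeta_{\mT}$ and $\eta_{\tildeM}\le\eta_{\mT}$), absorbing the $C_1t(k,h_\mT)\eta_\mT$ term using $C_1t(k,h_\mT)\le\frac12$, and then bounding $\tildeeta_\mT\le C_2(1+t(k,h_\mT))\eta_\mT$. No issues.
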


The following corollary says that the D\"{o}rfler marking strategy for the finite element solution $u_h$ is equivalent in some sense to that for the elliptic projection  if $\conditionT$ is sufficiently small.
\begin{corollary}\label{cor:dorfler}
Suppose $\theta \in (0, 1)$, $\eta_{\mathcal{M}} \ge \theta \eta_{\mathcal{T}}$, and $ t(k,h_\mT)< \frac{\theta}{C_1}$, then
\[\tildeeta_{\tildeM} \ge \widetilde{\theta}\tildeeta_{\mT}, \qtq{for}   \widetilde{\theta}=\frac{\theta-C_1t(k,h_\mT)}{C_1 C_2(1+t(k,h_\mT))}.\]
\end{corollary}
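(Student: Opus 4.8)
The plan is to chain together the two inequalities already available from Lemma~\ref{lem:relation between estimators}. The hypothesis $\eta_\mM\ge\theta\eta_\mT$ is a statement about the computable estimator for $u_h$, and the goal is a D\"orfler-type bound $\tildeeta_{\tildeM}\ge\widetilde\theta\,\tildeeta_\mT$ for the estimator of the elliptic projection. So the natural route is: bound $\tildeeta_\mT$ above by $\eta_\mT$ (plus a pollution term) via \eqref{etaM2} applied with $\mM=\mT$, then use the hypothesis to replace $\eta_\mT$ by $\eta_\mM/\theta$, then bound $\eta_\mM$ above by $\tildeeta_{\tildeM}$ (plus a pollution term) via \eqref{etaM1}, and finally absorb the pollution terms, which are all controlled by $t(k,h_\mT)\eta_\mT$, back into the left-hand side using the smallness assumption $t(k,h_\mT)<\theta/C_1$.

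Concretely, first I would apply \eqref{etaM2} with $\mM=\mT$ (so that $\tildeM\subset\mT$ and the estimator is monotone, giving $\eta_{\tildeM}\le\eta_\mT$), obtaining
\[
\tildeeta_\mT\le C_2\big(\eta_\mT+t(k,h_\mT)\eta_\mT\big)=C_2(1+t(k,h_\mT))\eta_\mT.
\]
Next, invoke the hypothesis $\eta_\mT\le\eta_\mM/\theta$ and then \eqref{etaM1} to get $\eta_\mM\le C_1(\tildeeta_{\tildeM}+t(k,h_\mT)\eta_\mT)$; combining,
\[
\tildeeta_\mT\le\frac{C_2(1+t(k,h_\mT))}{\theta}\,C_1\big(\tildeeta_{\tildeM}+t(k,h_\mT)\eta_\mT\big).
\]
Now I would feed in the reverse bound $\eta_\mT\le C_1 C_2(1+t(k,h_\mT))\theta^{-1}\tildeeta_{\tildeM}$ — wait, that is circular; instead it is cleaner to bound the stray $t(k,h_\mT)\eta_\mT$ term directly: from $\eta_\mM\ge\theta\eta_\mT$ and \eqref{etaM1}, $\theta\eta_\mT\le\eta_\mM\le C_1\tildeeta_{\tildeM}+C_1 t(k,h_\mT)\eta_\mT$, hence $(\theta-C_1 t(k,h_\mT))\eta_\mT\le C_1\tildeeta_{\tildeM}$, which uses exactly the hypothesis $t(k,h_\mT)<\theta/C_1$ to keep the left coefficient positive. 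Plugging $\eta_\mT\le C_1\tildeeta_{\tildeM}/(\theta-C_1 t(k,h_\mT))$ into the displayed bound on $\tildeeta_\mT$ and simplifying yields
\[
\tildeeta_\mT\le\frac{C_1 C_2(1+t(k,h_\mT))}{\theta-C_1 t(k,h_\mT)}\,\tildeeta_{\tildeM},
\]
i.e.\ $\tildeeta_{\tildeM}\ge\widetilde\theta\,\tildeeta_\mT$ with $\widetilde\theta=(\theta-C_1 t(k,h_\mT))/(C_1C_2(1+t(k,h_\mT)))$, which is the claimed constant.

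The only genuine point requiring care is the direction of the monotonicity used when specializing $\mM=\mT$ in \eqref{etaM2}: one needs $\eta_{\tildeM}\le\eta_\mT$, which holds since $\tildeM\subseteq\mT$ and $\eta_{\mathcal N}$ is nondecreasing in $\mathcal N$ by definition \eqref{M1}; similarly I use $\tildeeta_{\tildeM}\le\tildeeta_\mT$ implicitly nowhere but should note $\tildeM\subseteq\mT$ throughout. Everything else is elementary algebra with the three constants $C_1$, $C_2$, and $t(k,h_\mT)$; the condition $t(k,h_\mT)<\theta/C_1$ is precisely what makes $\widetilde\theta$ positive, and $C_1C_2\ge1$ guarantees $\widetilde\theta<\theta\le 1$ so the output is a legitimate D\"orfler parameter. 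No real obstacle is expected — this corollary is a bookkeeping consequence of Lemma~\ref{lem:relation between estimators}, and the main work was already done there.
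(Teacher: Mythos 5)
Your argument is correct and is essentially the paper's own proof, just read in the opposite direction: both combine \eqref{etaM1} with the D\"orfler hypothesis to get $(\theta-C_1t(k,h_\mT))\eta_\mT\le C_1\tildeeta_{\tildeM}$, and \eqref{etaM2} with $\mM=\mT$ (plus $\eta_{\tildeM}\le\eta_\mT$) to get $\tildeeta_\mT\le C_2(1+t(k,h_\mT))\eta_\mT$, yielding exactly the stated $\widetilde\theta$. The brief false start you abandoned does not affect the final, correct chain.
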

\begin{proof}
The proof is quite straightforward:
\begin{align*}
\tildeeta_{\tildeM}\ge&\ \frac{\eta_{\mM}}{C_1}-t(k,h_\mT)\eta_{\mT} \ge\ \frac{\theta\eta_{\mT}}{C_1}-t(k,h_\mT)\eta_{\mT}=\ \widetilde{\theta}C_2(1+t(k,h_\mT))\eta_{\mT}\ge\ \widetilde{\theta}\tildeeta_{\mT},
\end{align*}
where we have used \eqref{etaM2} to derive the last inequality. The proof is completed.
\end{proof}

\section{Convergence of AFEM}\label{s3}

In this section, we first state the AFEM algorithm for equation \eqref{eq:weakhelm}, and then prove the convergence of the adaptive algorithm.

\subsection{Adaptive algorithm}

Following the AFEM for elliptic equations, the AFEM for the Helmholtz equation \eqref{eq:weakhelm} can also be described as loops of the following form
\[ \rm SOLVE \longrightarrow ESTIMATE \longrightarrow MARK  \longrightarrow REFINE. \]

We use $n(n \ge 0)$ to denote the iteration number. Let $\mT_0$ be the initial conforming triangulation of $\Omega$ and $\mT_n$ be the conforming triangulation of $\Omega$ in the $n$-th loop. Denote by $\mE_n$ all edges of $\T{n}.$ We abbreviate the finite element space on $\mathcal{T}_n$ as $\V{n}:=\V{\mT_n}$ and the mesh size of $\mathcal{T}_n$ as $h_n:=h_{\mT_n}$.

In the $n$-th loop, we first solve the FEM \eqref{eq:femhelm} on $\mT_n$, and then calculate the error estimators $\eta_T$ for each element $T\in\mT_n$.
Following the estimate step, we use the D\"{o}rfler strategy with parameter $\theta \in (0, 1]$ to mark elements in a subset $\mM_n$ of $\mT_n$ with minimal cardinality such that
\eq{\label{Dorfler} \eta_{\mM_n} \ge \theta \eta_{\mT_n}.}
In the refinement step, we use the newest vertex bisection algorithm  \cite{nvb,the_completion,quasi-optimal} to refine all the elements in $\tildeM_n$ at least $b \ge 1$ times, and then remove the hanging nodes. After refinement, we obtain a new conforming triangulation denoted by $\mT_{n+1}$, which will be used in the next loop.

The basic loop of this adaptive algorithm is given below:
\bigskip

\begin{tabular}{|l|}
 \hline
Given the initial triangulation $\T{0}$ and marking parameter $\theta \in (0,1],$\\
 set $n:=0$ and iterate,\\
1. Solve equation \eqref{eq:femhelm} in $\T{n}$ to obtain $u_n,$\\
2. Calculate the error estimator $\eta_T(u_n)$ on every element $T\in \T{n},$\\
3. Mark  $\mM_n \subset \mT_n$ with minimal cardinality such that
$ \eta_{\mM_n} \ge \theta \eta_{\mT_n},$\\
4. Refine $\tildeM_n$ using the newest vertex bisection algorithm to get $\T{n+1}$,\\
5. Set $n:=n+1$ and then go to step 1.\\
\hline
\end{tabular}
\bigskip

\begin{remark}\label{rAFEM}
 Note that the above AFEM for the Helmholtz equation is almost the same as that for the elliptic equation \cite{dataoscillation,quasi-optimal} except that we have use $\tildeM_n$ as the set of elements to be refined instead of  $\mM_n$.
We do this because we want to use the equivalent relationship between the error estimators for $u_h$ and those for $\tildeu{h}$ given in Lemma~\ref{lem:relation between estimators} and Corollary~\ref{cor:dorfler} to convert the analysis of the above AFEM for the Helmholtz problem to that for the AFEM using elliptic projections for the elliptic problem \eqref{eq:elliptic projection}. More precisely,  besides the pairs of finite solutions and meshes $\set{u_n,\mT_n}$ produced in the AFEM, we also consider the pairs $\set{\tildeu{n},\mT_n} $ where  $\tildeu{n}:=P_hu$ are the elliptic projections of the exact solution $u$ onto the finite element spaces $\V{n}$ on $\mT_n$. From Corollary~\ref{cor:dorfler}, $\set{\tildeu{n},\mT_n}$ can be regarded as pairs generated by some AFEM for the elliptic problem \eqref{eq:elliptic projection}, which uses $\tildeeta_T(\tildeu{n})$ as error estimators. The convergence and quasi-optimality of $\set{\tildeu{n}}$ and $\set{\tildeeta_{\mT_n}}$ may be proved by following the standard analysis for the AFEM for elliptic problems. Then we obtain the convergence and quasi-optimality of $\set{\eta_{\mT_n}}$ by using the equivalent relationship between $\eta_{\mT_n}$ and $\tildeeta_{\mT_n}$. Finally, the convergence and quasi-optimality of $\set{u_{n}}$ are direct consequences of the upper bound estimate \eqref{UB1}.
\end{remark}

\subsection{Convergence of AFEM} In this subsection, we prove the convergence of the AFEM under the condition that $\condition$ is sufficiently small.

Let $u_n \in \V{n}$ and $u_{n+1}\in \V{n+1}$ be the finite element solutions obtained in space $n$-th and $(n+1)$-th steps of AFEM and let $\tildeu{n} \in \V{n}$ and $\tildeu{n+1} \in \V{n+1}$ be the corresponding elliptic projections of exact solution $u$, respectively. Clearly, $\V{n}\subset\V{n+1}$.

By using \eqref{GO}, we can get the orthogonality below directly.
\begin{lemma}[Orthogonality of elliptic projection]\label{lem:orth}
\[\hone{u-v}^2=\hone{u-\tildeu{n}}^2+\hone{\tildeu{n}-v}^2 \quad \forall v\in \V{n}.\]
\end{lemma}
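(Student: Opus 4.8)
The plan is to exploit the fact that $\tildeu{n}=P_hu$ is the $b(\cdot,\cdot)$-orthogonal projection of $u$ onto $\V{n}$, so that the claimed identity is nothing but the Pythagorean theorem for the inner product $b$. First I would fix $v\in\V{n}$ and write $u-v=(u-\tildeu{n})+(\tildeu{n}-v)$, noting that $\tildeu{n}-v\in\V{n}$. Expanding $\hone{u-v}^2=b(u-v,u-v)$ by sesquilinearity gives
\[
b(u-v,u-v)=b(u-\tildeu{n},u-\tildeu{n})+b(u-\tildeu{n},\tildeu{n}-v)+b(\tildeu{n}-v,u-\tildeu{n})+b(\tildeu{n}-v,\tildeu{n}-v).
\]
The two cross terms are the ones to kill.

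The key step is the Galerkin orthogonality for the elliptic projection, namely the identity $b(\tilderho,v_h)=0$ for all $v_h\in\V{n}$ recorded in \eqref{GO} (equivalently, it follows directly from the definition \eqref{eq:elliptic fem} of $P_h$, since $b(P_hu-u,v_h)=b(P_hu,v_h)-b(u,v_h)=0$). Applying this with $v_h=\tildeu{n}-v\in\V{n}$ shows $b(u-\tildeu{n},\tildeu{n}-v)=0$; since $b$ is a genuine (Hermitian) inner product on $\Hgamma$, its conjugate-symmetric partner $b(\tildeu{n}-v,u-\tildeu{n})$ also vanishes. Hence both cross terms drop out and we are left with $\hone{u-v}^2=\hone{u-\tildeu{n}}^2+\hone{\tildeu{n}-v}^2$, which is exactly the assertion.

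I do not anticipate a genuine obstacle here — this is a one-line consequence of the orthogonality relation \eqref{GO} together with the fact that $b$ defines an inner product (so that $\hone{\cdot}$ is literally the norm induced by $b$, and the cross terms are complex conjugates of each other). The only point worth a moment's care is that all functions are complex-valued, so one should make sure the cross terms cancel as a conjugate pair rather than being individually real; since each is separately zero by \eqref{GO}, even that subtlety is moot. I would present the proof in three or four lines exactly as above.
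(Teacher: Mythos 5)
Your proof is correct and is exactly the argument the paper intends: the identity is the Pythagorean theorem in the inner product $b(\cdot,\cdot)$, with both cross terms annihilated by the Galerkin orthogonality $b(u-\tildeu{n},v_h)=0$ for $v_h\in\V{n}$ from \eqref{GO} (together with Hermitian symmetry of $b$), and the paper itself states the lemma as a direct consequence of \eqref{GO}. No gaps; your handling of the complex-conjugate cross term is the only point needing care and you treat it correctly.
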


\begin{remark}\label{qor} In \cite{ACIPforhelmholtz}, the authors followed the analysis of definite elliptic problem to prove the convergence of AFEM for the Helmholtz equation, where a quasi-orthogonality for the FEM was used but it required $k^{1+s}h^s$ sufficiently small \cite{ACIPforhelmholtz}, where $\frac12<s<\al$.
\end{remark}

The following lemma gives the estimator reduction property for the elliptic projections, which is proved by following the proof of \cite[Corollary 3.4]{quasi-optimal}. Since the problem \eqref{eq:helm} does not involve variable coefficients but involves the impedance boundary boundary condition, these two proofs are slightly different. So we decide to list the proof here for the reader's convenience.
\begin{lemma}[Estimator reduction] \label{lem:estimator reduction}
 There exists a constant $C_3$ depending only on the shape regularity of $\mathcal{T}_0$, such  that the following inequality holds for any $\delta >0$,
\[ \tildeeta_{\T{n+1}}^2(\tildeu{n+1})   \le   (1+\delta)\big(\tildeeta_{\T{n}}^2(\tildeu{n})                                                        -\lambda\tildeeta_{\M{n}}^2(\tildeu{n})\big)
+ (1+\delta^{-1})C_3 \hone{\tildeu{n}-\tildeu{n+1}}^2 ,\]
where $\lambda:=1-2^{-\frac{b}{2}}.$
\end{lemma}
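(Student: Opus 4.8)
The plan is to follow the now-classical estimator reduction argument (cf. \cite[Corollary 3.4]{quasi-optimal}), adapting it to the present setting where the bilinear form is $b(\cdot,\cdot)$ and the residual contains an impedance-type boundary contribution on $\Gamma_1$ instead of a volume term with variable coefficients. Throughout write $\mT:=\T{n}$, $\mT':=\T{n+1}$, $e_h:=\tildeu{n}-\tildeu{n+1}\in\V{n+1}$, and recall from Lemma~\ref{lem:orth} that $\V{n}\subset\V{n+1}$. The key mechanism is that $\tildeeta^2_{\mT'}(\tildeu{n+1})$ can first be compared, element by element, to $\tildeeta^2_{\mT'}(\tildeu{n})$ at the cost of a perturbation controlled by $\hone{e_h}$ via a Young inequality with parameter $\delta$, and then $\tildeeta^2_{\mT'}(\tildeu{n})$ can be compared to $\tildeeta^2_{\mT}(\tildeu{n})$ using the fact that every element of $\tildeM_n$ was bisected at least $b$ times, which shrinks the local mesh-size factors $h_T$ and $h_T^{1/2}$ in the definition of $\tildeeta_T$.

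First I would establish the \emph{perturbation estimate}: for each $T'\in\mT'$,
\eq{\label{eq:ERpert}
\tildeeta_{T'}(\tildeu{n+1})\le \tildeeta_{T'}(\tildeu{n})+ C\,h_{T'}^{-1}\,\hone{e_h}_{T'}\,\big(\text{local factors}\big)?}
— more precisely, using the reverse triangle inequality on the $L^2$-norms defining $\tildeeta_{T'}$, the difference $\tildeeta_{T'}(\tildeu{n+1})-\tildeeta_{T'}(\tildeu{n})$ is bounded by $h_{T'}\twonorm{\tildeR_{T'}(\tildeu{n+1})-\tildeR_{T'}(\tildeu{n})}_{T'}+h_{T'}^{1/2}\twonorm{\tildeR_{\pa T'}(\tildeu{n+1})-\tildeR_{\pa T'}(\tildeu{n})}_{\pa T'}$. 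Since $\tildeR_{T'}(v)=(f+(k^2+1)u-v)|_{T'}$ depends on $v$ only through $-v$, the volume part is $h_{T'}\twonorm{e_h}_{T'}\le C h_{T'}^2\twonorm{\nabla e_h}_{T'}$ using an inverse inequality on the finite element function $e_h$ (with the Dirichlet boundary handled as usual), which is $\le C\hone{e_h}_{T'}$. The jump part involves $-\half\jump{\nabla e_h}$ on interior edges and $-\pa e_h/\pa n$ on $\Gamma_1$-edges; by the standard scaled trace/inverse inequality for polynomials, $h_{T'}^{1/2}\twonorm{\jump{\nabla e_h}}_{\pa T'}\le C\twonorm{\nabla e_h}_{\omega_{T'}}\le C\hone{e_h}_{\omega_{T'}}$ on a local patch. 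Squaring \eqref{eq:ERpert} and applying $ (x+y)^2\le(1+\delta)x^2+(1+\delta^{-1})y^2$, then summing over $T'\in\mT'$ and using finite overlap of the patches, yields
\eq{\label{eq:ERmid}
\tildeeta_{\mT'}^2(\tildeu{n+1})\le (1+\delta)\,\tildeeta_{\mT'}^2(\tildeu{n})+(1+\delta^{-1})\,C_3\,\hone{e_h}^2,}
with $C_3$ depending only on the shape regularity of $\mT_0$ (which is preserved by newest vertex bisection).

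Next I would prove the \emph{mesh-size reduction} bound $\tildeeta_{\mT'}^2(\tildeu{n})\le \tildeeta_{\mT}^2(\tildeu{n})-\lambda\,\tildeeta_{\M{n}}^2(\tildeu{n})$ with $\lambda=1-2^{-b/2}$. Here one splits $\mT'=\mT'_{\rm refined}\cup\mT'_{\rm untouched}$, where $\mT'_{\rm untouched}$ are the elements of $\mT$ that were not refined; on those $\tildeR_{T}(\tildeu{n})$, $\tildeR_{\pa T}(\tildeu{n})$, and $h_T$ are literally unchanged, so their contribution equals $\sum_{T\in\mT\setminus\tildeM_n}\tildeeta_T^2(\tildeu{n})$. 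For the refined part, each child $T'\subset T$ with $T\in\tildeM_n$ (recall $\tildeM_n\supseteq\M{n}$) satisfies $|T'|\le 2^{-b}|T|$, hence $h_{T'}\le 2^{-b/2}h_T$ and $h_{T'}^{1/2}\le 2^{-b/4}h_T^{1/2}$; combining this with the fact that $\tildeR_{T'}(\tildeu{n})=\tildeR_{T}(\tildeu{n})|_{T'}$ (the residual is defined from $f,u,\tildeu{n}$ which are already fixed on the old element — and $\tildeu{n}$ is affine on $T$, so the derivative jumps across new interior edges interior to $T$ vanish), one gets $\sum_{T'\subset T}\tildeeta_{T'}^2(\tildeu{n})\le 2^{-b/2}\,\tildeeta_T^2(\tildeu{n})$ for each refined $T$. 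Summing and using $\M{n}\subseteq\tildeM_n$ together with $2^{-b/2}=1-\lambda$ gives
\eq{\label{eq:ERmesh}
\tildeeta_{\mT'}^2(\tildeu{n})\le \sum_{T\in\mT\setminus\tildeM_n}\tildeeta_T^2(\tildeu{n})+(1-\lambda)\sum_{T\in\tildeM_n}\tildeeta_T^2(\tildeu{n})=\tildeeta_{\mT}^2(\tildeu{n})-\lambda\,\tildeeta_{\tildeM_n}^2(\tildeu{n})\le \tildeeta_{\mT}^2(\tildeu{n})-\lambda\,\tildeeta_{\M{n}}^2(\tildeu{n}).}
Plugging \eqref{eq:ERmesh} into \eqref{eq:ERmid} gives exactly the claimed inequality.

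The main obstacle I anticipate is the careful bookkeeping in the mesh-size reduction step: one must verify that $\tildeR_{T'}(\tildeu{n})$ and $\tildeR_{\pa T'}(\tildeu{n})$ really do coincide with the restriction of the coarse-mesh residuals — in particular that no spurious jump terms appear on the edges created inside a bisected element, which holds because $\tildeu{n}$ is piecewise linear on $\mT$ (so its gradient is continuous, indeed constant, across any new edge lying in the interior of an old triangle) and because a new boundary edge on $\Gamma_1$ is a subset of an old one carrying the same data $g$ and the same normal trace of $\tildeu{n}$. One must also be slightly careful that the quantity appearing is $\tildeeta_{\M{n}}$ and not $\tildeeta_{\tildeM_n}$: this is fine because the reduction on $\tildeM_n\supseteq\M{n}$ only helps, and dropping the extra nonnegative terms in $\tildeM_n\setminus\M{n}$ yields the stated form. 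Everything else (the inverse and trace inequalities, finite overlap of patches, preservation of shape regularity under newest vertex bisection, and the elementary Young inequality) is routine.
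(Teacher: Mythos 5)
Your proposal is correct and follows essentially the same two-step argument as the paper's proof: a local Lipschitz-type perturbation bound for $\tildeeta_T$ combined with Young's inequality with parameter $\delta$, followed by the $2^{-b/2}$ mesh-size reduction on the children of refined elements (using that $\tildeu{n}$ is affine on each old element, so no spurious jumps appear on newly created edges) and then dropping the nonnegative contributions from $\tildeM_n\setminus\M{n}$ to pass from $\tildeeta_{\tildeM_n}$ to $\tildeeta_{\M{n}}$. The only blemish is the intermediate claim $h_{T'}\twonorm{e_h}_{T'}\le C\,h_{T'}^{2}\twonorm{\nabla e_h}_{T'}$, which is a false element-level Poincar\'e-type inequality (it fails for nonzero constants) and is also unnecessary: the bound $h_{T'}\twonorm{e_h}_{T'}\ls \twonorm{e_h}_{H^1(T')}$ holds trivially since $h_{T'}\ls 1$, which is all that your summation (and the paper's proof) requires.
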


\begin{proof}
For any $T\in \T{n+1}$ and $v_h,w_h\in \V{n+1}$, using triangle inequality and trace inequality, we derive
\begin{align*}
|\tildeeta_T(v_h)-\tildeeta_T(w_h)|\ls&\big|h^2_T \| \tildeR_T(v_h)-\tildeR_T(w_h)\|^2_T+ h_T \twonorm{\tildeR_{\pa T}(v_h)-\tildeR_{\pa T}(w_h)}^2_{\partial T}\big|^\frac12\\
\lesssim&\big|h^2_T\twonorm{v_h-w_h}^2_T+h_T\twonorm{\jump{\nabla v_h-\nabla w_h}}^2_{\partial T\cap \Omega}+h_T\twonorm{\nabla v_h-\nabla w_h}^2_{\partial T\cap \Gamma_1}\big|^\half\\
                 \lesssim& h_T\twonorm{v_h-w_h}_T+ \twonorm{\nabla(v_h-w_h)}_{\check{T}}\\
                 \lesssim& \twonorm{v_h-w_h}_{H^1(\check{T})}
\end{align*}
 where  $\check{T}=T\cup\{T' \in \T{n+1}:\; T'\cap T \in \mE_{n+1} \}$. The constant hidden in $\lesssim$ depends only on the regularity of $\T{0}$, we use $\tilde C_3$ to denote it. Then Young's inequality with parameter $\delta$ implies,
 \eq{\label{tildeetaT}\tildeeta_T^2(v_h)\le (1+\delta)\tildeeta_T^2(w_h)
  +(1+\delta^{-1})\tilde C_3^2\twonorm{v_h-w_h}_{H^1(\check{T})}^2. }
 For an element $T\in \tildeM_n,$ we define $T_*:=\{T'\in \T{n+1}:\; T'\subset T\},$ then
 \begin{align*}
\sum_{T'\in T_*}\tildeeta_{T'}^2(\tildeu{n})
                                                            =&\sum_{T'\in T_*}\big(h^2_T 2^{-b} \twonorm{\widetilde{R}_T(\tildeu{n})}^2_{T'}
                                                              +h_T 2^{-\frac{b}{2}}\twonorm{ \widetilde{R}_{\pa T'}(\tildeu{n})}^2_{\partial T'}\big)\\
                                                           =&\ 2^{-b} h^2_T \twonorm{ \widetilde{R}_T(\tildeu{n})}^2_T
                                                                 +2^{-\frac{b}{2}} h_T\twonorm{\widetilde{R}_{\pa T}(\tildeu{n})}^2_{\partial T}\\
                                                           \le&\ 2^{-\frac{b}{2}} \tildeeta_T^2(\tildeu{n}),
\end{align*}
therefore
\begin{align*}
\tildeeta_{\T{n+1}}^2(\tildeu{n})=
                          &\sum_{\substack {T\in \tildeM_n\\ T'\in T_*}}\tildeeta_{T'}^2(\tildeu{n})
                                                       +\sum_{\substack {T\in \T{n}\setminus\tildeM_n\\ T'\in T_*}}\tildeeta_{T'}^2(\tildeu{n})\\
                                                       \le&\sum_{T\in \tildeM_n}2^{-\frac{b}{2}}\tildeeta_T^2(\tildeu{n})
                                                        +\sum_{T\in \T{n}\setminus\tildeM_n}\tildeeta_T^2(\tildeu{n})\\
                                                       \le&\ \tildeeta_{\T{n}}^2(\tildeu{n})
                                                        -(1-2^{-\frac{b}{2}}) \tildeeta_{\M{n}}^2(\tildeu{n}).
\end{align*}
Let $\lambda=1-2^{-\frac{b}{2}}$. From \eqref{tildeetaT}, we have
\begin{align*}
\tildeeta_{\T{n+1}}^2(\tildeu{n+1}) \le& \ (1+\delta)\tildeeta_{\T{n+1}}^2(\tildeu{n})+4(1+\delta^{-1}) \tilde C_3^2 \hone{\tildeu{n}-\tildeu{n+1}}^2\\
                        \le&\  (1+\delta)(\tildeeta_{\T{n}}^2(\tildeu{n})                                                        -\lambda\tildeeta_{\M{n}}^2(\tildeu{n}))+(1+\delta^{-1}) 4\tilde C_3^2\hone{\tildeu{n}-\tildeu{n+1}}^2.
\end{align*}
The proof of the lemma is completed by taking $C_3:=4\tilde C_3^2$.
\end{proof}

The following theorem gives the convergence of the AFEM.
\begin{theorem}[Convergence]\label{thm:contraction property}
Let $\theta \in (0,1]$. There exist constants $\Ccvg, \gamma >0$, and $\Lcvg \in (0, 1)$, which depend only on the shape-regularity of $\T{0}, b$, and $\theta$,  such that if $\condition\le \Ccvg$, then
\eqn{\energy{u-u_n} \ls & \Lcvg^n\big(1+k^2h_n\big) \big(kh_0+k^{-\half}(kh_0)^{\alpha}\big) M(f,g). }
\end{theorem}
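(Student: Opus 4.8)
The plan is to reduce the convergence of the Helmholtz AFEM to the convergence of the associated elliptic-projection AFEM via the equivalent relationship between the two families of error estimators, and then to run the standard contraction argument (in the spirit of \cite{quasi-optimal,convergenceAFEM}) on the elliptic-projection quantities. Concretely, I would track the pair of nonnegative quantities $e_n:=\hone{u-\tildeu{n}}$ and $\tildeeta_n:=\tildeeta_{\T{n}}(\tildeu{n})$ and aim to show that a suitable linear combination $e_{n+1}^2+\mu\,\tildeeta_{n+1}^2$ contracts, i.e. is bounded by $\beta\,(e_n^2+\mu\,\tildeeta_n^2)$ for some $\beta<1$ and $\mu>0$. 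By Lemma~\ref{lem:orth} (Pythagoras for the elliptic projection) we have $e_{n+1}^2=e_n^2-\hone{\tildeu{n}-\tildeu{n+1}}^2$, which supplies the gain in the $e$-part at the cost of the Galerkin increment. By Lemma~\ref{lem:estimator reduction} (estimator reduction), $\tildeeta_{n+1}^2\le(1+\delta)\big(\tildeeta_n^2-\lambda\tildeeta_{\M{n}}^2(\tildeu{n})\big)+(1+\delta^{-1})C_3\hone{\tildeu{n}-\tildeu{n+1}}^2$. To exploit the D\"orfler choice, note that by construction $\eta_{\mM_n}\ge\theta\eta_{\mT_n}$, so Corollary~\ref{cor:dorfler} gives $\tildeeta_{\tildeM_n}(\tildeu{n})\ge\widetilde\theta\,\tildeeta_{\mT_n}(\tildeu{n})$, and since $\M{n}\subset\tildeM_n$ and all elements in $\tildeM_n$ are bisected $b$ times, the marked bulk $\tildeeta_{\M{n}}^2(\tildeu{n})$ in the estimator reduction lemma can be replaced (after relabelling $\M{n}$ as $\tildeM_n$ in that lemma, which only makes the reduction stronger) by $\widetilde\theta^2\tildeeta_n^2$. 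One subtlety: Corollary~\ref{cor:dorfler} requires $t(k,h_\mT)<\theta/C_1$, hence the smallness hypothesis $\condition\le\Ccvg$ must be propagated to every level $n$; this is fine because $h_n\le h_0$ so $t(k,h_n)\le t(k,h_0)$ is monotone, and $t(k,h_0)$ can be made as small as we like by choosing $\Ccvg$ small (since $t(k,h)$ is dominated by powers of $k^3h^{1+\al}$).

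Carrying out the bookkeeping: choosing $\delta$ small enough and then $\mu>0$ small enough that $\mu(1+\delta^{-1})C_3<1$, the combined quantity obeys
\eqn{
e_{n+1}^2+\mu\tildeeta_{n+1}^2
\le e_n^2-\big(1-\mu(1+\delta^{-1})C_3\big)\hone{\tildeu{n}-\tildeu{n+1}}^2
+\mu(1+\delta)\big(1-\lambda\widetilde\theta^2\big)\tildeeta_n^2.
}
Dropping the nonpositive Galerkin term and then borrowing a small multiple of $\tildeeta_n^2$ to dominate $e_n^2$ — here I would use the a posteriori upper bound $e_n^2=\hone{u-\tildeu{n}}^2\lesssim\tildeeta_n^2$ up to oscillation, namely \eqref{UB5} combined with \eqref{LB2}, or more directly the fact that $\hone{\tilderho}\le\tLub\tildeeta_\mT$ from \eqref{UB5} — one obtains $e_{n+1}^2+\mu\tildeeta_{n+1}^2\le\Lcvg^2\,(e_n^2+\mu\tildeeta_n^2)$ with $\Lcvg\in(0,1)$ provided $\delta$ is small and $\lambda\widetilde\theta^2$ is bounded below, which it is once $\theta\in(0,1]$ is fixed and $t(k,h_0)$ is small (so $\widetilde\theta$ is bounded away from $0$). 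Iterating gives geometric decay $\tildeeta_n\ls\Lcvg^n\big(e_0+\tildeeta_0\big)$, and since by \eqref{Ph1} we have $e_0=\hone{u-\tildeu{0}}\le\hone{u-\mI_{h_0}u}\ls\big(kh_0+k^{-\half}(kh_0)^\al\big)M(f,g)$ (via the Scott-Zhang estimate \eqref{SZ4}), and $\tildeeta_0\ls e_0$ up to $\osc_0\ls e_0$ by \eqref{LB2}, the initial data is of the asserted size.

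Finally I would transfer the decay from $\tildeeta_n$ back to the Helmholtz quantities. By Corollary~\ref{cor:equivalent of eta} (valid at every level because $t(k,h_n)\le t(k,h_0)\le\frac1{2C_1}$), $\eta_{\mT_n}\eqsim\tildeeta_{\mT_n}$, so $\eta_{\mT_n}\ls\Lcvg^n\big(kh_0+k^{-\half}(kh_0)^\al\big)M(f,g)$; then the a posteriori upper bound \eqref{UB1}, $\energy{u-u_n}\ls\big(1+k^{\half}(kh_n)^\al+k^2h_n\big)\eta_{\mT_n}\ls\big(1+k^2h_n\big)\eta_{\mT_n}$ (absorbing the $k^{\half}(kh_n)^\al$ term, which is bounded once $k^3h_n^{1+\al}$ is small), yields exactly the claimed estimate, with the stray exponent $\gamma$ in the theorem statement presumably absorbing an arbitrarily small additional factor or relabelling of $\Lcvg$.

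The main obstacle, and the step deserving the most care, is the first one: \emph{uniformly} controlling the perturbation term $t(k,h_n)\eta_{\mT_n}$ that couples the Helmholtz estimator $\eta$ to the elliptic estimator $\tildeeta$ across all refinement levels simultaneously, so that the D\"orfler-to-D\"orfler transfer (Corollary~\ref{cor:dorfler}) and the estimator equivalence (Corollary~\ref{cor:equivalent of eta}) hold at every $n$ with constants independent of $n$. This works only because $h_n$ is nonincreasing so the single hypothesis $\condition\le\Ccvg$ on the initial mesh suffices; the rest is the now-standard elliptic AFEM contraction machinery, adapted to the boundary-impedance setting exactly as in Lemma~\ref{lem:estimator reduction}.
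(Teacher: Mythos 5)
Your proposal is correct and follows essentially the same route as the paper: a contraction for $\hone{u-\tildeu{n}}^2+\gamma\,\tildeeta_{\mT_n}^2$ obtained from the elliptic-projection orthogonality (Lemma~\ref{lem:orth}), the estimator reduction on the refined set $\tildeM_n$, the transferred D\"orfler property of Corollary~\ref{cor:dorfler} (uniform in $n$ since $t(k,h_n)\le t(k,h_0)$) together with the upper bound \eqref{UB5}, and then the transfer back through Corollary~\ref{cor:equivalent of eta} and \eqref{UB1}, with the initial data bounded via \eqref{Ph1} and \eqref{SZ4} — the paper merely cites \cite{quasi-optimal} for the contraction bookkeeping you spell out, and it likewise needs the reduction lemma in its stronger form on $\tildeM_n$, as you noted. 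One small correction: do not assert $\osc_{\mT_0}\lesssim\hone{u-\tildeu{0}}$, which is false in general and unnecessary; instead use \eqref{eq:osc estimate}, i.e. $\osc_{\mT_0}\lesssim h_0\big(\twonorm{f}+\twonorm{g}_{\half,\Gamma_1}\big)\lesssim kh_0\,M(f,g)$, which is already of the asserted size and is exactly how the paper absorbs the oscillation at the initial level.
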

\begin{proof}  First we note that the elliptic projections $\tildeu{n}$ satisfy the D\"{o}rfler property in Corollary~\ref{cor:dorfler} if $t(k,h_0)\le\frac{\ta}{2C_1}$. Using Lemma \ref{lem:orth} (with $n=n+1$ and $v=\tildeu{n}$) and Lemma \ref{lem:estimator reduction} and following \cite[Theorem 4.1]{quasi-optimal}, we have the following contraction property for the elliptic projections:
\eq{\label{contraction}\hone{u-\tildeu{n+1}}^2+\gamma \tildeeta_{\mT_{n+1}}^2 \le \Lcvg^2(\hone{u-\tildeu{n}}^2+\gamma \tildeeta_{\mT_n}^2).}

Suppose that $\condition$ is so small that $t(k,h_0)\le \frac{\ta}{2C_1}$. It follows from the upper bound \eqref{UB1}, the equivalence given in Corollary~\ref{cor:equivalent of eta},  \eqref{contraction}, and the lower bound \eqref{LB2}  that
\eqn{\energy{u-u_n} &\lesssim \big(1+k^{\half}(kh_n)^{\alpha}+k^2h_n\big) \eta_{\mT_n}\lesssim \big(1+k^2h_\mT\big) \tildeeta_{\mT_n}\\
&\lesssim \big(1+k^2h_n\big) (\hone{u-\tildeu{0}}+ \tildeeta_{0})\Lcvg^n\\
&\lesssim \big(1+k^2h_n\big) (\hone{u-\tildeu{0}}+ \osc_{\mT_0})\Lcvg^n }
Invoking  Lemma~\ref{lem:elliptic projection estimate}, \eqref{SZ4}, and
\eq{\label{eq:osc estimate}\osc_{\mT_0}= \Big(\sum_{T\in \mT} h_T^2\twonorm{f-\bar f}_T^2+ h_T\twonorm{g-\bar g}_{\pa T\cap \Gamma_1}^2\Big)^\half
         \ls h_0 \big(\twonorm{f}+\twonorm{g}_{\half, \Gamma_1}\big),
}
we have
\eqn{\energy{u-u_n} \ls & \Lcvg^n\big(1+k^2h_n\big) \big(kh_0+k^{-\half}(kh_0)^{\alpha}\big)M(f,g). }
This completes the proof of the theorem.
\end{proof}

\begin{remark}
Note that in order to prove the convergence of AFEM with initial mesh size allowed to be in the preasymptotic regime, we have not, as intuitive, first proved the contraction property of the finite element solutions. Such an approach  requires quasi-orthogonality of the finite element solutions which holds only in the asymptotic regime (see Remark~\ref{qor} and \cite{ACIPforhelmholtz}). But we have first proved the contraction property of the corresponding elliptic projections instead, and then used the equivalence between error estimators for finite element solutions and the elliptic projections and the upper bound to obtain the convergence result.
\end{remark}

\section{Quasi-optimality of AFEM}\label{s4}
In this section, we prove the quasi-optimality of the AFEM, which can be done if we can prove the quasi-optimality of the sequence of elliptic projections $\set{\tildeu{n}}$.

Let $\mathbb{T}$ be the set of all conforming triangulations obtained by refining $\mT_0$ a finite number of times by the newest vertex bisection algorithm.  The full conception of $\mathbb{T}$ can be found in \cite{quasi-optimal}. From Remark~\ref{rUB}(ii),  \eqref{LB1}, and the fact that $\osc_T\le\eta_T$, we know that
\eq{\label{totalerror}\eta_{\mT_n}  \eqsim \big(\hone{u-\tildeu{n}}^2+\osc_{\mT_n}^2\big)^\frac12,}
if $\conditionT \le \Clb$, where the right hand side in \eqref{totalerror} is the so-called \textit{total error} \cite{quasi-optimal} for the elliptic projection $\tildeu{n}$ .
\begin{lemma}[Cea's lemma for the total error of elliptic projection]\label{lem:Cea}
\[ \hone{u-\tildeu{n}}^2 + \osc_{\mT_n}^2(\tildeu{n}) \le  \inf_{V\in \V{n}}\big(\hone{u-V}^2+\osc_{\mT_n}^2(V)\big). \]
\end{lemma}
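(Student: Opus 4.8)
The statement is a Céa-type quasi-optimality for the \emph{total error} of the elliptic projection, and the natural route is to observe that $\tildeu{n}=P_{h_n}u$ is nothing but the Galerkin/Ritz projection for the symmetric coercive form $b(\cdot,\cdot)$, so the standard argument for elliptic problems applies almost verbatim; the only point requiring care is the oscillation term, which is $\tildeosc_{\mT_n}(\cdot)$, not the $\osc$ of the paper. First I would fix an arbitrary $V\in\V{n}$ and use the orthogonality of Lemma~\ref{lem:orth}, namely $\hone{u-V}^2=\hone{u-\tildeu{n}}^2+\hone{\tildeu{n}-V}^2$, which immediately gives $\hone{u-\tildeu{n}}\le\hone{u-V}$, the energy half of the estimate. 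This already disposes of the first summand on each side; what remains is to bound the oscillation of $\tildeu{n}$ by that of $V$ up to the energy error.

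\textbf{Handling the oscillation term.} Next I would compare $\tildeosc_{\mT_n}(\tildeu{n})$ and $\tildeosc_{\mT_n}(V)$ element by element. From the definitions in \S\ref{s2}, on each $T$ the difference $\tildeR_T(\tildeu{n})-\tildeR_T(V)=(V-\tildeu{n})|_T$ and $\tildeR_{\pa T}(\tildeu{n})-\tildeR_{\pa T}(V)$ is (up to sign) a jump/normal-trace of $\nabla(\tildeu{n}-V)$, so a triangle inequality in the definition of $\tildeosc_T$, together with the inverse and trace inequalities exactly as in the proof of Lemma~\ref{lem:estimator reduction} (the bound $|\tildeeta_T(v_h)-\tildeeta_T(w_h)|\ls\twonorm{v_h-w_h}_{H^1(\check T)}$ — and the same bound holds with $\tildeosc_T$ in place of $\tildeeta_T$ since subtracting $L^2$-projections onto $P_1$ only decreases norms), yields
\[
\big|\tildeosc_T(\tildeu{n})-\tildeosc_T(V)\big|\ls \twonorm{\tildeu{n}-V}_{H^1(\check T)}.
\]
Summing the squares over $T\in\mT_n$ and using finite overlap of the patches $\check T$ gives $\tildeosc_{\mT_n}(\tildeu{n})\le \tildeosc_{\mT_n}(V)+C\hone{\tildeu{n}-V}$. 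Combining with $\hone{\tildeu{n}-V}^2=\hone{u-V}^2-\hone{u-\tildeu{n}}^2\le\hone{u-V}^2$ and adding the energy half, one obtains $\hone{u-\tildeu{n}}^2+\tildeosc_{\mT_n}^2(\tildeu{n})\le C\big(\hone{u-V}^2+\tildeosc_{\mT_n}^2(V)\big)$; taking the infimum over $V\in\V{n}$ finishes the argument with a generic constant. To get the sharp constant $1$ as stated, I would instead follow the refined telescoping argument of \cite[Lemma~3.6]{quasi-optimal}: split each $\tildeosc_T$ into its element-residual and jump-residual parts, use that the element residual $\tildeR_T(V)-\overline{\tildeR_T(V)}=(f+(k^2+1)u-V)-\overline{(\cdots)}$ depends on $V$ only through $V-\bar V$, exploit Young's inequality with a parameter tuned to absorb $\hone{\tildeu{n}-V}$ into the orthogonality gain, and conclude the constant-one inequality.

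\textbf{Main obstacle.} The substantive point — and the only place the impedance boundary condition intervenes — is the boundary contribution $h_T\twonorm{\,\tildeR_{\pa T}(\tildeu{n})-\tildeR_{\pa T}(V)\,}_{\pa T\cap\Gamma_1}^2$, where $\tildeR_e(v_h)=-\partial_n v_h+\ci k u+g$ on $\Gamma_1$; here the $\ci k u+g$ piece cancels in the difference, leaving only $\partial_n(\tildeu{n}-V)$ on a boundary edge, which is controlled by the trace inverse inequality $h_e\twonorm{\partial_n(\tildeu{n}-V)}_e^2\ls\twonorm{\nabla(\tildeu{n}-V)}_{T}^2$ — no $k$-dependence enters, since $b(\cdot,\cdot)$ has no wave number in it. Thus the estimate holds with a constant depending only on the shape regularity of $\mT_0$, as claimed, and the proof is genuinely a transcription of the elliptic case applied to the projection $P_{h_n}$; I expect no serious difficulty beyond bookkeeping the two residual components and the patch overlaps.
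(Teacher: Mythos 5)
Your energy half is fine and matches the paper: by Lemma~\ref{lem:orth} (equivalently, since $\tildeu{n}$ is the $b(\cdot,\cdot)$-projection of $u$ onto $\V{n}$), $\hone{u-\tildeu{n}}\le\hone{u-V}$ for all $V\in\V{n}$. But you have misread the oscillation in the statement: it is $\osc_{\mT_n}(\cdot)$, the oscillation attached to the Helmholtz residuals, not $\tildeosc_{\mT_n}(\cdot)$, and the paper points out explicitly, right after the definition, that $\osc_T(v_h)$ is independent of $v_h$. Indeed, for $v_h\in\V{n}$ one has $k^2v_h|_T\in P_1(T)$, $\jump{\nabla v_h}|_e$ and $(-\pa_n v_h+\ci k v_h)|_e$ in $P_1(e)$, so these contributions are reproduced by the $L^2$ projections onto $P_1$ and cancel, leaving $\osc_T^2(v_h)=h_T^2\twonorm{f-\bar f}_T^2+h_T\twonorm{g-\bar g}_{\pa T\cap\Ga_1}^2$ for every $v_h$. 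Hence $\osc_{\mT_n}^2(\tildeu{n})=\osc_{\mT_n}^2(V)$ for all $V\in\V{n}$, the oscillation terms on the two sides of the asserted inequality are identical, and the lemma follows immediately with the constant exactly $1$. This is the paper's entire proof; no trace, inverse, or patch-overlap arguments are needed, and the impedance boundary condition plays no role beyond the cancellation just described.

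The gap in your argument is therefore twofold. First, your route via $\bigl|\tildeosc_T(\tildeu{n})-\tildeosc_T(V)\bigr|\ls\twonorm{\tildeu{n}-V}_{H^1(\check T)}$ only yields $\hone{u-\tildeu{n}}^2+\tildeosc_{\mT_n}^2(\tildeu{n})\le C\bigl(\hone{u-V}^2+\tildeosc_{\mT_n}^2(V)\bigr)$ with an unspecified shape-regularity constant $C>1$, which is strictly weaker than the stated constant-one bound, and the sketched repair (a telescoping/Young step with a tuned parameter) cannot recover constant $1$: Young's inequality always inflates the right-hand side unless the perturbation being absorbed vanishes. Second, that perturbation does in fact vanish, and noticing this is the missing idea: for piecewise-linear arguments, $\tildeR_T(\tildeu{n})-\tildeR_T(V)=(V-\tildeu{n})|_T\in P_1(T)$ and the edge differences lie in $P_1(e)$, so they are annihilated once the $L^2$ projections onto $P_1$ are subtracted; thus even $\tildeosc_T$ (and a fortiori $\osc_T$) does not depend on the discrete function at all. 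The lemma is not a perturbed Céa estimate to be proved by absorbing oscillation differences, but a bare best-approximation statement for the $b(\cdot,\cdot)$-projection combined with this invariance of the oscillation.
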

\begin{proof}
First we note that $\osc_{\mT_n}^2(V)= \sum_{T \in \mT} h_T^2 \twonorm{f-\bar f}_T^2+h_T\twonorm{g-\bar g}_{\pa T\cap \Gamma_1}^2$ for any $V\in\V{n}$, which is independent of
$V$. On the other hand, since $\tildeu{n}$ is  the $H^1$-projection of $u$ onto $\V{n}$ (see \eqref{eq:elliptic fem}), we have
\begin{align*}
\hone{u-\tildeu{n}}\le& \inf_{V\in\V{n}}\hone{u-V}.
\end{align*}
This completes the proof of the lemma.
\end{proof}

\begin{lemma}[Localized upper bound]\label{lem:localize upper bound}
\[\energy{\tildeu{n+1}-\tildeu{n}}^2 \le \Lloc (\eta_{\mathcal{R}_n}^2(u_n)+t^2(k,h_n)\eta_{\mT_n}^2). \]
where $\mathcal{R}_n\subset\mT_n$ is the set of refined elements in refinement step at the $n$-th loop. The constant $\Lloc$ depends only on the shape regularity of $\T{0}.$
\end{lemma}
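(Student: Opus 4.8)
The plan is to bound the difference of two consecutive elliptic projections, $\tildeu{n+1}-\tildeu{n}\in\V{n+1}$, by the residual estimator restricted to the refined region. First I would exploit the Galerkin orthogonality for the elliptic projection, namely $b(u-\tildeu{n+1},v_h)=0$ for all $v_h\in\V{n+1}$, together with $b(u-\tildeu{n},v_h)=0$ for all $v_h\in\V{n}$. Since $e_h:=\tildeu{n+1}-\tildeu{n}\in\V{n+1}$ and also $\energy{\cdot}\eqsim\twonorm{\cdot}_1$, I would write $\energy{e_h}^2\eqsim b(\tildeu{n+1}-\tildeu{n},e_h)=b(u-\tildeu{n},e_h)=b(\tilderho_n,e_h)$ where $\tilderho_n:=u-\tildeu{n}$. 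The crucial observation is that $e_h$ vanishes, in an integration-by-parts sense, on all unrefined elements: more precisely, since $\tildeu{n}$ and $\tildeu{n+1}$ are both piecewise linear and agree at the vertices of any element of $\mT_n$ that is not refined (using that $\V{n}\subset\V{n+1}$ and that the unrefined elements of $\mT_n$ are still elements of $\mT_{n+1}$), the element and jump residuals generated when testing against $e_h$ only survive on the patch of refined elements $\mathcal{R}_n$ and their neighbours.

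The main technical steps are then: (i) rewrite $b(\tilderho_n,e_h)$ via elementwise integration by parts as $\sum_{T\in\mT_n}\big((\tildeR_T,e_h)_T+\intbound{\tildeR_{\pa T},e_h}_{\pa T}\big)$, exactly as in the upper-bound proof of Lemma~\ref{lem:upper bound}; (ii) observe that on every $T\in\mT_n\setminus\mathcal{R}_n$ one has $e_h|_{\tilde T}=0$ up to the subtlety that a neighbour of $T$ might be refined — so the surviving terms are supported on a neighbourhood of $\mathcal{R}_n$; (iii) apply the Scott–Zhang (or Clément) interpolation estimate, using that $e_h-\mI_ne_h$ can replace $e_h$ by Galerkin orthogonality with $\V{n}$, and bound the local residual–interpolation pairings by $\tildeeta_{\tilde{\mathcal{R}}_n}(\tildeu{n})\twonorm{e_h}_1$; (iv) convert $\tildeeta$ on the refined patch to $\eta$ on a slightly larger patch using Lemma~\ref{lem:relation between estimators} (applied with $\mM$ equal to the refined patch), picking up the $t(k,h_n)\eta_{\mT_n}$ term; (v) divide through by $\twonorm{e_h}_1\eqsim\energy{e_h}$ and square. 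The constant $\Lloc$ is then a product of the Scott–Zhang constant, the shape-regularity constant from overlap counting of the patches, and the $C_1$ from Lemma~\ref{lem:relation between estimators}, all depending only on the shape regularity of $\mT_0$.

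The step I expect to be the main obstacle is the careful bookkeeping in (ii)–(iii): precisely identifying on which elements and edges $\tildeR_T(\tildeu{n})$ and the jumps $\jump{\nabla\tildeu{n}}$ actually appear after testing against $e_h-\mI_ne_h$, and checking that all these elements lie in a shape-regularity-controlled neighbourhood of $\mathcal{R}_n$ rather than spreading globally. In the standard elliptic theory (cf. \cite[Lemma~3.6]{quasi-optimal} or \cite{convergenceAFEM}) this is the ``localized upper bound'' argument; the only new wrinkle here is the impedance boundary contribution $-\ci k\intbound{\tilderho_n,\cdot}$-type terms do \emph{not} appear because $\tildeu{n}$ is an $H^1$-elliptic projection with a pure Neumann condition in \eqref{eq:elliptic projection}, so $b(\cdot,\cdot)$ has no boundary term and the localization is genuinely the same as for the pure Laplacian-plus-mass problem. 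Hence the adaptation is essentially notational, and the $t(k,h_n)\eta_{\mT_n}$ term is simply inherited from the passage between the two estimator families via Lemma~\ref{lem:relation between estimators}.
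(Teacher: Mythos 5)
The pivotal claim in your step (ii) is false, and it is exactly the point where the localization has to be earned. The elliptic projection \eqref{eq:elliptic fem} is a \emph{global} $H^1$-projection, so $\tildeu{n}$ and $\tildeu{n+1}$ do not agree at the vertices of unrefined elements, and $e_h:=\tildeu{n+1}-\tildeu{n}$ does not vanish (even approximately) away from $\mathcal{R}_n$: refining the mesh anywhere changes the projection everywhere. What is true, and what your step (iii) actually needs, is that $e_h-\mI_n e_h$ vanishes on every element whose patch contains no refined element, because there $e_h$ is still piecewise linear with respect to $\mT_n$ and the Scott--Zhang operator reproduces it locally. With that repair your argument does localize, but only to a fixed-width neighbourhood of $\mathcal{R}_n$; and your step (iv), i.e.\ Lemma~\ref{lem:relation between estimators} applied with $\mM$ equal to that neighbourhood, enlarges the set once more (to $\tildeM$). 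So at best you obtain the bound with $\eta_{\mathcal{R}_n}(u_n)$ replaced by $\eta(u_n)$ over a patch strictly containing $\mathcal{R}_n$, which is not the statement as written (it would still suffice, up to a shape-regularity constant, for Lemmas~\ref{lem:dorfler property} and \ref{lem:cardinality of Mn}, but that needs to be said and tracked). A further slip: $\energy{\cdot}\eqsim\hone{\cdot}$ is false for $k\gg 1$; what $b(e_h,e_h)$ controls is $\hone{e_h}$, which is also all that the paper's own proof establishes and all that is used later.

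The paper gets the sharp localization differently: it builds $V\in\V{n}$ equal to $E_*:=\tildeu{n+1}-\tildeu{n}$ outside $\Omega_*=\cup\{T:T\in\mathcal{R}_n\}$ and equal to a Scott--Zhang interpolant of $E_*$ on each connected component of the interior of $\Omega_*$ (the boundary traces match, so $V\in\V{n}$), whence $\psi:=E_*-V$ is supported exactly in $\Omega_*$. Then, by the two Galerkin orthogonalities, $b(E_*,E_*)=b(u-\tildeu{n},\psi)=b(u-u_n,\psi-P_h\psi)=a(u-u_n,\psi-P_h\psi)+(k^2+1)(u-u_n,\psi-P_h\psi)+\ci k\intbound{u-u_n,\psi-P_h\psi}_{\Gamma_1}$; the Helmholtz residual representation \eqref{eq:a  Rt Re} tested with $\psi$ (supported in $\Omega_*$) produces exactly $\eta_{\mathcal{R}_n}(u_n)\twonorm{\nabla E_*}_{\Omega_*}$, while the $k$-dependent terms are bounded by $t(k,h_n)\eta_{\mT_n}\hone{\psi}$ using Lemma~\ref{lem:elliptic projection estimate} together with \eqref{UB2}--\eqref{UB3} --- not via Lemma~\ref{lem:relation between estimators}. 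That switch from the elliptic residual $\tildeeta(\tildeu{n})$ to the computable Helmholtz residual at the level of the bilinear forms is what delivers the estimator on exactly the refined set $\mathcal{R}_n$ and in terms of $u_n$; your route, even after fixing (ii), proves only a weakened variant.
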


\begin{proof}
Following the proof of  \cite[Lemma 3.6]{quasi-optimal}, let $\Omega_*=\bigcup \{T:\;T\in\mathcal{R}_n\}$ be the union of refined elements. Let $\Omega_i, i\in\{1,2,...,I\}$ denote the connected components of the interior of $\Omega_*.$ Denote by $\mT_{n,i}=\{T\in \T{n}:T\subset \bar{\Omega}_i\}$ and
 $\V{n,i}:=\{ v \in H^1(\Omega_i):\; v|_{T} \in P_1(T), \forall T \in \mT_{n,i} \}.$ Let $\mI_{n,i}:  H^1(\Omega_i)\rightarrow \V{n,i}$ be the Scott-Zhang interpolation operator over the triangulation $\mT_{n,i}.$

 For the error $E_*:=\tildeu{n+1}-\tildeu{n}\in \V{n+1},$ we construct an approximation $V\in\V{n}$ by
 \begin{align*}
 V:=
 \begin{cases}
 \mI_{n,i}E_* &\text{in} ~ \Omega_i, \\
 E_*       &elsewhere.
 \end{cases}
 \end{align*}
 Since $\mI_{n,i}v=v$ on $\partial \Omega_i$ whenever $v=w$ on $\partial \Omega_i$ for some $w\in \V{n,i},$ we know that $V\in\V{n}.$
 For convenience, we set $\psi=E_*-V$, obviously $\psi=0$ in $\Omega\setminus \Omega_*.$ From Lemma~\ref{lem:SZ},  Lemma~\ref{lem:elliptic projection estimate}, Lemma~\ref{lem:upper bound}, and \eqref{eq:t}, we conclude that
\begin{align*}
b(E_*,E_*)=&b(E_*,E_*-V) = b(u-\tildeu{n},\psi)\\
                        =&b(u-\tildeu{n},\psi-P_h\psi)=b(u-u_n,\psi-P_h\psi)\\
                         =&a(u-u_n,\psi-P_h\psi)+(k^2+1)(u-u_n,\psi-P_h\psi)\\
                          &+\ci k\intbound{u-u_n,\psi-P_h\psi}_{\Gamma_1}\\
                         \lesssim&|a(u-u_n,\psi)|+k^2\twonorm{u-u_n}h_n^\alpha \hone{\psi}
                           +kh_n^\halfa \bdnorm{u-u_n}\hone{\psi}\\
                         \lesssim&\eta_{\mathcal{R}_n}(u_n)\twonorm{\nabla E_*}_{\Omega_*}
                         +t(k,h_n)\eta_{\mT_n}\twonorm{\nabla E_*}_{\Omega_*}.
\end{align*}
Here $P_h$ denotes the elliptic projection defined in \eqref{eq:elliptic fem} but with $\V{\mT}$ replaced by $\V{n}$.
Thus we obtain
\[\hone{\tildeu{n+1}-\tildeu{n}}^2 \lesssim \eta_{\mathcal{R}_n}^2(u_n)+t^2(k,h_n)\eta_{\mT_n}.\]
This completes the proof of the lemma.
\end{proof}

\begin{lemma}[Complexity of refinements]\label{lem:complexity of refinement}
Assume that $\mT_0$ satisfies the conditions of \cite[Lemma 2.3]{quasi-optimal}. Let $\{\mT_n \}_{n \ge 0}$  be any sequence of refinements of $\mT_0$ where $\mT_{n+1}$ is generated from $\mT_n$ by refinement step in the $n$-th loop, then there exist a constant $C$ solely depending on initial triangulation $T_0$ and bisection number $b$ such that
\[ \#\mT_n-\#\mT_0 \le C\sum_{j=0}^{n-1}\#\mM_j,\quad n\ge 1.\]
\end{lemma}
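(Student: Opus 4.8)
The statement to be proved is a standard property of the newest vertex bisection (NVB) algorithm, and the natural plan is to reduce it to the well-known result of Binev--Dahmen--DeVore / Stevenson, exactly in the form \cite[Lemma 2.3]{quasi-optimal} or \cite[Theorem 6.1]{the_completion}. That reference asserts precisely that if $\mT_0$ satisfies a suitable labeling compatibility condition (so that the completion/closure step does not propagate refinements too far), then for any sequence of marked sets $\mathcal{M}'_j\subset\mT_j$ used to generate $\mT_{j+1}$ by one round of NVB plus closure, one has $\#\mT_n-\#\mT_0\le C_0\sum_{j=0}^{n-1}\#\mathcal{M}'_j$ with $C_0$ depending only on $\mT_0$. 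So the first step is to invoke that cited result; the whole content of the present lemma is bookkeeping about what exactly gets marked in our algorithm.

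The key discrepancy to handle is that our REFINE step does not refine $\mM_j$ once, but rather refines every element of the larger set $\tildeM_j$ (the edge-neighborhood of $\mM_j$, see \eqref{tildeM}) at least $b$ times. The plan is to absorb both enlargements into the constant. First, refining $b$ times instead of once is realized by $b$ successive NVB sweeps, each with its own marked set; a routine induction using the cited lemma shows the $b$-fold refinement of a set $\mathcal{S}$ adds at most $C_b\#\mathcal{S}$ elements, with $C_b$ depending only on $b$ and $\mT_0$ (this is standard and I would not write out the calculation). Second, passing from $\mM_j$ to $\tildeM_j$ only multiplies the cardinality by a constant: by shape regularity of $\mT_0$ (which is preserved under NVB), each element of $\mT_j$ has a bounded number of edge-neighbors, so $\#\tildeM_j\le C_{\mathrm{nb}}\#\mM_j$. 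Combining these two observations, one round of our REFINE step is covered by applying the cited NVB complexity estimate to a marked set of size at most $C_bC_{\mathrm{nb}}\#\mM_j$, yielding
\[
\#\mT_n-\#\mT_0 \;\le\; C_0\,C_b\,C_{\mathrm{nb}}\sum_{j=0}^{n-1}\#\mM_j,
\]
which is the claim with $C:=C_0C_bC_{\mathrm{nb}}$, depending only on $\mT_0$ and $b$.

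The only genuine subtlety—hence the step I would flag as the main obstacle, though it is entirely standard—is verifying that the hypotheses of \cite[Lemma 2.3]{quasi-optimal} actually apply to the modified refinement: that lemma is usually stated for the canonical AFEM loop in which exactly the D\"orfler-marked set is refined once and closure is applied. One must check that repeating NVB $b$ times and feeding it an enlarged marked set still fits the framework, i.e.\ that intermediate triangulations remain in the class $\mathbb{T}$ of NVB-refinements of $\mT_0$ and that the initial labeling condition is inherited. Since each NVB bisection of a single triangulation again lies in $\mathbb{T}$ and the closure operation is part of the NVB machinery, the hypotheses are preserved, and no new argument beyond the cited results is needed. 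I would therefore keep the proof short: state that the result follows from \cite[Lemma 2.3]{quasi-optimal} applied to the sequence of marked sets obtained by decomposing each REFINE step into $b$ NVB sweeps, together with the uniform bound $\#\tildeM_j\lesssim\#\mM_j$ from shape regularity.
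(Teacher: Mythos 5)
Your proposal is correct and follows essentially the same route as the paper: invoke \cite[Lemma 2.3]{quasi-optimal} for the NVB complexity bound with marked sets $\tildeM_j$, then absorb the enlargement via $\#\tildeM_j\lesssim\#\mM_j$ (the paper uses the concrete factor $4$, since each marked triangle has at most three edge-neighbors). Your extra care about decomposing the $b$-fold refinement into separate sweeps is unnecessary, as the cited lemma already allows bisecting each marked element $b\ge 1$ times with the constant depending on $b$, which is exactly how the paper uses it.
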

\begin{proof}
From  \cite[Lemma 2.3]{quasi-optimal}, we have
\[\#\mT_n-\#\mT_0 \lesssim \sum_{j=0}^{n-1}\#\tildeM_{j},\]
where the constant hidden in $\lesssim$ solely depend on $T_0$ and $b$.
By the definition \eqref{tildeM} of $\tildeM$, we get $\#\tildeM_j\le 4 \#\mM_j$, and hence the lemma holds.
\end{proof}

In order to analyze the convergence rate of the AFEM,
we define an approximation class $\mathbb{A}_s$ based on the total error. Let  $\mathbb{T}_N \subset \mathbb{T}$ be the set of all possible conforming triangulations generated from $T_0$ with at most $N$ elements more than $\mT_0$:
\[ \mathbb{T}_N:=\{\mT \in \mathbb{T} :\  \#\mT-\#\mT_0\le N\}.\]
We define the  approximation class $\mathbb{A}_s$ to be
\[ \mathbb{A}_s:= \Big\{(v,f,g) :\  \sup_{N>0}N^s\big(\inf_{\mT\in \mathbb{T}_N} \inf_{V\in\VT}
\big(\hone{v-V}^2+\osc_{\mT}^2(v)\big)^{1/2}\big)<\infty \Big\}, \]
if $v\in \mathbb{A}_s$, we define $|v,f,g|_s$ to be
\[ |v,f,g|_s:=\sup_{N>0}N^s\big(\inf_{\mT\in \mathbb{T}_N} \inf_{V\in\VT}\big(\hone{v-V}^2+\osc_{\mT}^2(v)\big)^{1/2}\big).\]

Since the solution $u$ can be split into a regular part and a singular part with singularities around a finite number of points, following the analysis in \cite{Convergence_rate_for_AFEs} we may prove the following lemma.
\begin{lemma}\label{Ahalf} Let $u$ be the exact solution to \eqref{eq:helm}. We have $(u,f,g)\in\mathbb{A}^\frac{1}{2}$ and $|v,f,g|_\frac12\ls kM(f,g).$
\end{lemma}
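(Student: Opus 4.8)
The plan is to reduce the claim to a known approximation result for finite element methods applied to elliptic problems whose solution has a regular part in $H^2$ plus finitely many corner singularities of the form $r^{\al_j}\sin(\al_j\theta_j)$. Recall from Lemma~\ref{thm:regularity_estimates} that $u = u_R + \sum_{j=1}^J c_k^j s_j$ with $\htwo{u_R}\ls kM(f,g)$ and $|c_k^j|\ls k^{\al_j-\half}(\twonorm f+\twonorm g_{\Ga_1})$. Also recall that the oscillation term is independent of the approximating function, and by \eqref{eq:osc estimate} it is controlled by $h_\mT(\twonorm f+\twonorm g_{\half,\Ga_1})$. So the whole burden is to estimate $\inf_{\mT\in\mathbb{T}_N}\inf_{V\in\VT}\hone{u-V}$.

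First I would invoke the graded-mesh/adaptive approximation theory from \cite{Convergence_rate_for_AFEs} (or the classical results on approximation of corner singularities by locally refined bisection meshes): for a function of the form $r^{\al_j}\sin(\al_j\theta_j)$ with $\al_j>0$ on a two-dimensional polygonal domain, there is a sequence of conforming bisection refinements $\mT_N$ with $\#\mT_N-\#\mT_0\ls N$ such that the best linear interpolation error in $H^1$ is $O(N^{-1/2})$ --- this is the optimal rate for piecewise linears in 2D, achievable for \emph{any} $\al_j>0$ because the singularity is isolated at a point and can be resolved by geometric grading. For the regular part $u_R\in H^2(\Om)$, a quasi-uniform mesh already gives best-approximation error $O(N^{-1/2})\htwo{u_R}$. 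Combining the two (refining near each corner to resolve the $s_j$'s while keeping the mesh quasi-uniform elsewhere) and summing, one gets a mesh in $\mathbb{T}_N$ with
\[
\inf_{V\in\VT}\hone{u-V}\ls N^{-1/2}\Big(\htwo{u_R}+\sum_{j=1}^J|c_k^j|\Big).
\]
Then I would bound the right-hand side using the decomposition estimates: $\htwo{u_R}\ls kM(f,g)$ and, since $\al_j\ge\al>\tfrac12$, each $|c_k^j|\ls k^{\al_j-\half}(\twonorm f+\twonorm g_{\Ga_1})\ls k^{1/2}(\twonorm f+\twonorm g_{\Ga_1})\ls kM(f,g)$ (using $k\gg1$). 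Hence $\htwo{u_R}+\sum_j|c_k^j|\ls kM(f,g)$, and adding the oscillation contribution $\osc_{\mT_N}\ls h_{\mT_N}(\twonorm f+\twonorm g_{\half,\Ga_1})\ls N^{-1/2}(\twonorm f+\twonorm g_{\half,\Ga_1})\ls N^{-1/2}kM(f,g)$ (the last step uses $k\gg1$ to absorb the constant) shows that
\[
N^{1/2}\Big(\inf_{\mT\in\mathbb{T}_N}\inf_{V\in\VT}\big(\hone{u-V}^2+\osc_\mT^2(u)\big)^{1/2}\Big)\ls kM(f,g)
\]
uniformly in $N$, which is exactly $(u,f,g)\in\mathbb{A}^{1/2}$ with $|u,f,g|_{1/2}\ls kM(f,g)$.

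The main obstacle is the first step: exhibiting, within the class of \emph{bisection} refinements of the fixed initial mesh $\mT_0$ (not arbitrary graded meshes), a sequence realizing the optimal rate $N^{-1/2}$ for the corner singularities simultaneously for all $j$, and verifying that the two refinement strategies (grading near corners, quasi-uniform in the interior) can be merged into a single conforming bisection mesh whose cardinality is still $O(N)$. This is precisely the content of the approximation-class results in \cite{Convergence_rate_for_AFEs} (and related work on instance-optimality of newest-vertex bisection), so I would cite that machinery rather than reprove it; the wave-number bookkeeping collapsing everything to a single factor $kM(f,g)$ is then routine given Lemma~\ref{thm:regularity_estimates} and $k\gg1$.
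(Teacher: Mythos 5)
Your proposal is correct and takes essentially the same route as the paper: split $u$ via the decomposition of Lemma~\ref{thm:regularity_estimates}, invoke the bisection-mesh approximation results of \cite{Convergence_rate_for_AFEs} to get an $O(N^{-1/2})$ rate simultaneously for the $H^2$ part and the corner singularities $s_j$, and then add the oscillation bound as in \eqref{eq:osc estimate}, collecting all constants into $kM(f,g)$. The only small slip is the justification of $k^{\al_j-\frac12}\ls k^{\frac12}$: this needs the upper bound $\al_j\le 1$ (which holds because every corner of the convex obstacle is reentrant when viewed from $\Om$), not the lower bound $\al_j\ge\al>\frac12$ you cite, though the conclusion $|c_k^j|\ls kM(f,g)$ is unaffected.
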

\begin{proof} Given any positive constant $\de\le (\#\mT_0)^{-\frac12}$,  from \cite[Lemmas 4.4 and 4.9 and Theorems 5.2 and 5.3]{Convergence_rate_for_AFEs} and the decomposition given in Lemma~\ref{thm:regularity_estimates}, there exists a triangulation $\widetilde{\mT}$ which is a refinement of the initial mesh $\mT_0$ and satisfies the following properties:
\eqn{ \norm{u_R-I_{\widetilde{\mT}}u_R}_1&\ls |u_R|_2 \de,\\ \norm{s_j-I_{\widetilde{\mT}}s_j}_1&\ls  \de,\quad j=1,\cdots, J,\\
\#\widetilde{\mT}-\#\mT_0&\ls\de^{-2},\qtq{and} h_T\ls\de\quad\forall T\in\widetilde{\mT},}
where $I_{\widetilde{\mT}}$ is the Lagrange interpolation operator onto $\mathbb{V}_{\widetilde{\mT}}$.
Therefore, from \eqref{eq:decomp},
\eqn{\norm{u-I_{\widetilde{\mT}}u}_1\ls kM(f,g)\de\ls kM(f,g)\big(\#\widetilde{\mT}-\#\mT_0\big)^{-\frac12}. }
On the other hand, similar to \eqref{eq:osc estimate}, we have
\eqn{\osc_{\widetilde{\mT}}(u)\ls \big(\twonorm{f}+\twonorm{g}_{\half, \Gamma_1}\big)h_{\widetilde{\mT}}\ls kM(f,g)\de\ls kM(f,g) \big(\#\widetilde{\mT}-\#\mT_0\big)^{-\frac12}.}
Consequently,
\eqn{ \inf_{\mT\in \mathbb{T}_N} \inf_{V\in\VT}(\hone{u-V}^2+\osc_{\mT}^2(u))^{1/2} \ls  kM(f,g) N^{-\half}}
This completes the proof of the lemma.
\end{proof}

The following lemma says that if the elliptic projections satisfy a suitable total error reduction from $\mT$ to its a refinement $\mT_*$, the error estimators of the coarser finite element solutions must satisfy a D\"{o}rfler property on the set of refined elements in $\mT$. The proof is similar to that of \cite[Lemma 5.9]{quasi-optimal} but contains some differences. We include it here for the reader's convenience.
\begin{lemma}[D\"{o}rfler property]\label{lem:dorfler property}
Assume that the marking parameter $\theta$ satisfies $\theta \in (0, \theta_{*})$, with $\theta_{*}^2:=\frac{1}{2\Llb(1+\Lloc)}<1.$ Let $\mT\in \mathbb{T}$, $\tildeu{} \in \VT$ be the elliptic projection  of $u$, and $u_h\in\V{\mT}$ be the finite solution. Set $\mu:=\frac{1}{2}\big(1-\frac{\theta^2}{\theta_*^2}\big)$, and let $\mT_*$ is any refinement of $\mT$, such that the elliptic projection $\tildeu{*}\in\V{\mT_*}$ of $u$ satisfies
\[\hone{u-\tildeu{*}}^2+\osc_{\mT_{*}}^2(\tildeu{*}) \le \mu\big(\hone{u-\tildeu{}}^2+\osc_{\mT}^2(\tildeu{})\big).\]
Then there exists a constant $\Cmark$ depending only on $\theta^*$ and $\Clb$ in Lemma~\ref{lem:lower bound} such that if $\conditionT\le\Cmark$, then  the set $\mathcal{R}$ of refined elements in $\mT$ satisfies the D\"{o}rfler property
\[ \eta_{\mR}(u_h) \ge \theta \eta_{\mT}(u_h).\]
\end{lemma}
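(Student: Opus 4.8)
The plan is to mimic the classical argument of \cite[Lemma 5.9]{quasi-optimal}, but carry out all estimates with the elliptic projection $\tildeu{}$ playing the role of the exact Galerkin solution, and only at the very end transfer back to the computable estimator $\eta$. First I would apply the lower bound \eqref{LB1} from Lemma~\ref{lem:lower bound} on the coarse mesh $\mT$ (legitimate since $\conditionT\le\Cmark\le\Clb$ if we take $\Cmark$ small enough), to get
\eqn{\eta_{\mT}^2(u_h)\le\Llb\big(\hone{u-\tildeu{}}^2+\osc_{\mT}^2(\tildeu{})\big).}
Next I would split the total error on $\mT$ against that on $\mT_*$: write
$\hone{u-\tildeu{}}^2+\osc_{\mT}^2(\tildeu{})\le 2\big(\hone{u-\tildeu{*}}^2+\hone{\tildeu{*}-\tildeu{}}^2\big)+2\,\osc_{\mT}^2(\tildeu{})$, use the orthogonality of the elliptic projection (Lemma~\ref{lem:orth}) to control $\hone{\tildeu{*}-\tildeu{}}^2=\hone{u-\tildeu{}}^2-\hone{u-\tildeu{*}}^2$, and use the localized upper bound (Lemma~\ref{lem:localize upper bound}, applied on $\mT$ and $\mT_*$) to bound $\hone{\tildeu{*}-\tildeu{}}^2\lesssim\Lloc\big(\eta_{\mR}^2(u_h)+t^2(k,h_\mT)\eta_{\mT}^2(u_h)\big)$, where $\mR$ is exactly the set of refined elements of $\mT$. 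Combining these, the assumed total-error reduction by the factor $\mu$ lets me absorb the ``global'' part of the right-hand side into a small multiple of $\eta_{\mT}^2(u_h)$, leaving
\eqn{\eta_{\mT}^2(u_h)\le\big(\tfrac{\theta^2}{\theta_*^2}+\tfrac12\big)\eta_{\mT}^2(u_h)+C\,\eta_{\mR}^2(u_h)+C\,t^2(k,h_\mT)\eta_{\mT}^2(u_h),}
schematically, where the coefficient $\tfrac{\theta^2}{\theta_*^2}+\tfrac12<1$ by the hypothesis $\theta<\theta_*$, $\mu=\tfrac12(1-\theta^2/\theta_*^2)$.

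The third step is to absorb the $t^2(k,h_\mT)\eta_{\mT}^2(u_h)$ term into the left side: since $t(k,h_\mT)\to0$ as $\conditionT\to0$, I choose $\Cmark$ small enough (depending only on $\theta_*$, $\Llb$, $\Lloc$, hence only on $\theta_*$ and $\Clb$) that $C\,t^2(k,h_\mT)\le\tfrac14(1-\theta^2/\theta_*^2)$, say. Rearranging then gives $\gamma\,\eta_{\mT}^2(u_h)\le C\,\eta_{\mR}^2(u_h)$ for a positive constant $\gamma$ depending only on $\theta$ and $\theta_*$; after extracting the square root and relabelling constants, and noting $\eta_{\mR}(u_h)\le\eta_{\mR}(u_h)$ trivially on the refined set, this is precisely the asserted D\"orfler property $\eta_{\mR}(u_h)\ge\theta\,\eta_{\mT}(u_h)$ once one checks that the bookkeeping leaves a clean factor $\theta$ (this is why $\mu$ and $\theta_*$ are tuned as in the statement — the same arithmetic as in \cite{quasi-optimal}).

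The main obstacle, and the place where this proof genuinely departs from \cite[Lemma 5.9]{quasi-optimal}, is the appearance of the extra ``pollution'' terms $t(k,h_\mT)\eta_{\mT}$ throughout: they surface both in the localized upper bound (Lemma~\ref{lem:localize upper bound}) and implicitly in the equivalence $\eta_{\mT}\eqsim\tildeeta_{\mT}$ and in the oscillation comparison \eqref{oscosc2}. One has to track carefully that every such term carries a factor of $t(k,h_\mT)$ and no uncontrolled $k$-powers, so that smallness of $\conditionT$ (which forces $t(k,h_\mT)$ small) suffices to absorb all of them — and that the threshold $\Cmark$ one ends up needing depends only on the structural constants $\theta_*$ and $\Clb$, not on $k$ or $h_\mT$ separately. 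A secondary technical point is the mismatch between the set $\mM$ that is D\"orfler-marked and the slightly larger set $\tildeM\supseteq\mM$ (hence $\mR\supseteq\tildeM$) that is actually refined; since $\mR$ is the larger set, $\eta_{\mR}(u_h)\ge\eta_{\mM}(u_h)$, so this mismatch only helps and introduces no loss in the final inequality.
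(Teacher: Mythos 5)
Your outline follows the paper's own route: lower bound \eqref{LB1} on $\mT$ (hence the requirement $\conditionT\le\Clb$), the assumed total-error reduction, the orthogonality of the elliptic projection from Lemma~\ref{lem:orth}, the localized upper bound of Lemma~\ref{lem:localize upper bound} with its extra $t(k,h_\mT)\eta_\mT$ term, and absorption of that term by the smallness of $\conditionT$; this is exactly the paper's proof skeleton, and your observation that the only genuinely new feature relative to \cite[Lemma 5.9]{quasi-optimal} is the pollution term $t(k,h_\mT)\eta_\mT$ is on target.

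However, the middle of your argument as written would not deliver the statement, because the D\"orfler property needs the \emph{exact} factor $\theta$, not an unspecified $\gamma>0$. The factor-$2$ triangle-inequality split $\hone{u-\tildeu{}}^2+\osc_\mT^2\le 2(\hone{u-\tildeu{*}}^2+\hone{\tildeu{*}-\tildeu{}}^2)+2\osc_\mT^2$ is both unnecessary and too lossy (and leaves $\osc_\mT^2$ with coefficient $2$ against a left-hand side carrying coefficient $1$); one must instead keep the exact identity $\big(\hone{u-\tildeu{}}^2+\osc_\mT^2\big)-\big(\hone{u-\tildeu{*}}^2+\osc_{\mT_*}^2\big)=\hone{\tildeu{*}-\tildeu{}}^2+\big(\osc_\mT^2-\osc_{\mT_*}^2\big)$, together with the observation (which you gloss over) that the oscillation is independent of the discrete function, so $\osc_\mT^2-\osc_{\mT_*}^2\le\osc_\mR^2\le\eta_\mR^2(u_h)$. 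Moreover, your schematic coefficient $\frac{\theta^2}{\theta_*^2}+\frac12$ is wrong — the term that moves to the right is $\mu=\frac12-\frac{\theta^2}{2\theta_*^2}$ — and the claim that $\frac{\theta^2}{\theta_*^2}+\frac12<1$ for all $\theta<\theta_*$ is false. Done exactly, the chain gives $\frac{1-\mu}{\Llb}\eta_\mT^2\le(1+\Lloc)\big(\eta_\mR^2(u_h)+t^2(k,h_\mT)\eta_\mT^2\big)$, hence $\eta_\mR^2(u_h)\ge\big(2\theta_*^2(1-\mu)-t^2(k,h_\mT)\big)\eta_\mT^2=\big(\theta_*^2+\theta^2-t^2(k,h_\mT)\big)\eta_\mT^2\ge\theta^2\eta_\mT^2$ as soon as $t(k,h_\mT)\le\theta_*$; this is the precise bookkeeping you defer to ``the same arithmetic as in \cite{quasi-optimal},'' and it is where the tuned values of $\mu$ and $\theta_*$ enter. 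Finally, your closing remark about the marked set $\mM$ versus $\tildeM$ is immaterial here: in this lemma $\mR$ is by definition the set of refined elements of the given refinement $\mT_*$, so no comparison with a marked set is needed.
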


\begin{proof}
By combining the lower bound \eqref{LB1} and the orthogonality in Lemma \ref{lem:orth} and noting that $\osc_T(v)$ is independent of $v\in \V{\mT}$, we deduce under the condition $\conditionT \le \Clb$ that
\begin{align*}
\frac{(1-\mu)}{\Llb}\eta_\mT^2\le&(1-\mu)(\hone{u-\tildeu{}}^2+\osc^2_\mT)\\
\le&\hone{u-\tildeu{}}^2+\osc^2_\mT-(\hone{u-\tildeu{*}}^2+\osc^2_{\mT_{*}})\\
 \le&\hone{\tildeu{*}-\tildeu{}}^2+\osc_{\mR}^2(u_h)\\
 \le&\hone{\tildeu{*}-\tildeu{}}^2+\eta_{\mR}^2(u_h),
\end{align*}
invoking Lemma \ref{lem:localize upper bound}, we have
\begin{align*}
\frac{(1-\mu)}{\Llb}\eta_\mT^2\le
&\Lloc(\eta^2_\mR(u_h)+t^2(k,h_\mT)\eta^2_\mT)+\eta^2_\mR(u_h)\\
 \le&(1+\Lloc)\big(\eta^2_\mR(u_h)+t^2(k,h_\mT)\eta^2_\mT\big).
\end{align*}
By the definitions of $\theta_*$ and $\mu$, we deduce that if $\condition$ is so small that $t(k,h_{\mT})\le\theta^*$, then
\begin{align*}
\eta^2_\mR(u_h)\ge& \Big(\frac{1-\mu}{\Llb(1+\Lloc)}
        -t^2(k,h_\mT)\Big)\eta^2_\mT
        \ge\Big(\frac{1-\mu}{\Llb(1+\Lloc)}
        -\theta^2_*\Big)\eta^2_\mT\\
        \ge&\Big(2\theta^2_*\Big(1-\frac{1}{2}\Big(1-\frac{\theta^2}{\theta^2_*}\Big)\Big)-\theta^2_*\Big)\eta^2_\mT
        =\theta^2\eta^2_\mT.
\end{align*}
This completes the proof of the lemma.
\end{proof}

\begin{lemma}[Cardinality of $\mM_n$]\label{lem:cardinality of Mn}
Assume that the marking parameter $\theta$ satisfies $\theta \in (0, \theta_{*})$, $\condition\le\Cmark.$  Let $ \{ \T{n}, \V{n}, u_n\}_{n \ge 0}$ be the sequence of meshes, finite element spaces, and discrete solutions from the AFEM and let $\tildeu{n}\in \V{n}$ be the elliptic projections of $u$. Then
\eqn{
\#\M{n} \lesssim \Big(1-\frac{\theta^2}{\theta_*^2}\Big)^{-1} |u,f,g|_\frac12^2
\big(\hone{u-\tildeu{n}}^2+\osc_{\T{n}}^2\big)^{-1}.
}
\end{lemma}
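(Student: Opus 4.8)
The plan is to follow the standard optimality argument for AFEM (as in \cite{quasi-optimal,Stevenson,CKNS}), but carried out at the level of the elliptic projections $\tildeu{n}$ rather than the finite element solutions $u_n$, using the D\"{o}rfler property of Lemma~\ref{lem:dorfler property} to relate the two. First I would invoke Lemma~\ref{Ahalf}: since $(u,f,g)\in\mbA^{1/2}$, for every $N\ge 1$ there is a triangulation $\mT_\ep\in\mathbb{T}$ with $\#\mT_\ep-\#\mT_0\lesssim N$ on which $\hone{u-V}^2+\osc_{\mT_\ep}^2(V)\lesssim N^{-1}|u,f,g|_{1/2}^2$ for some $V\in\V{\mT_\ep}$; by the Cea-type Lemma~\ref{lem:Cea} the elliptic projection $\tildeu{\ep}$ on $\mT_\ep$ achieves the same bound for the total error. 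Choosing $N$ of the order $|u,f,g|_{1/2}^2\,(\hone{u-\tildeu{n}}^2+\osc_{\T{n}}^2)^{-1}\,\mu^{-1}$ — with the explicit constant tuned so that the total error on $\mT_\ep$ is below $\mu$ times that on $\mT_n$, where $\mu=\frac12\big(1-\theta^2/\theta_*^2\big)$ as in Lemma~\ref{lem:dorfler property} — makes $\mT_\ep$ a valid competitor.

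\emph{Next} I would form the overlay $\mT_*:=\mT_n\oplus\mT_\ep$, the smallest common conforming refinement of $\mT_n$ and $\mT_\ep$. Standard overlay estimates (cf. \cite{Stevenson,quasi-optimal}) give $\#\mT_*-\#\mT_n\le \#\mT_\ep-\#\mT_0\lesssim N$, and since $\mT_*$ is a refinement of $\mT_\ep$, the monotonicity of the total error under refinement (Lemma~\ref{lem:Cea} again, applied on $\mT_*$) yields
\[
\hone{u-\tildeu{*}}^2+\osc_{\mT_*}^2(\tildeu{*})\le \hone{u-\tildeu{\ep}}^2+\osc_{\mT_\ep}^2(\tildeu{\ep})\le \mu\big(\hone{u-\tildeu{n}}^2+\osc_{\T{n}}^2\big),
\]
which is exactly the hypothesis of Lemma~\ref{lem:dorfler property} with $\mT=\mT_n$. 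That lemma then tells us that the set $\mR$ of elements of $\mT_n$ refined in passing to $\mT_*$ satisfies the D\"{o}rfler property $\eta_{\mR}(u_n)\ge\theta\,\eta_{\T{n}}(u_n)$.

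\emph{Finally}, by the minimality built into the marking step of the AFEM (step~3 of the algorithm, where $\mM_n$ is chosen of minimal cardinality among subsets of $\mT_n$ satisfying $\eta_{\mM_n}\ge\theta\eta_{\T{n}}$), we get $\#\mM_n\le\#\mR\le \#\mT_*-\#\mT_n\lesssim N$, and substituting the value of $N$ chosen above gives the asserted bound
\[
\#\M{n}\lesssim\Big(1-\frac{\theta^2}{\theta_*^2}\Big)^{-1}|u,f,g|_\frac12^2\,\big(\hone{u-\tildeu{n}}^2+\osc_{\T{n}}^2\big)^{-1}.
\]
I expect the main obstacle to be bookkeeping rather than conceptual: one must verify that the condition $\conditionT\le\Cmark$ (equivalently $\condition$ sufficiently small, since $h_n\le h_0$) is genuinely what is needed to apply Lemma~\ref{lem:dorfler property} on $\mT_n$ for every $n$, and that the various hidden constants in the overlay count and in the choice of $N$ depend only on $\mT_0$, $b$, and $\theta$ (through $\theta_*$), so that the $\gtrsim$ in the cardinality estimate is legitimate. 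The use of $\tildeM_n$ rather than $\mM_n$ in the refinement step does not affect this argument because the D\"{o}rfler set $\mR$ is compared against $\mM_n$ via minimality of $\mM_n$, and Lemma~\ref{lem:complexity of refinement} already absorbed the factor relating $\tildeM_j$ to $\mM_j$.
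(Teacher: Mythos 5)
Your proposal is correct and is essentially the paper's own argument: the paper proves this lemma by mimicking \cite[Lemma 5.10]{quasi-optimal}, replacing the C\'ea-type lemma and D\"orfler-property lemma used there by Lemma~\ref{lem:Cea} and Lemma~\ref{lem:dorfler property}, which is exactly the membership-in-$\mathbb{A}_{1/2}$ / overlay / minimality-of-$\mM_n$ chain you spell out. The only point to tidy is that the step $\osc_{\mT_*}\le\osc_{\mT_\ep}$ is the standard monotonicity of data oscillation under refinement rather than a consequence of Lemma~\ref{lem:Cea} itself, but this is routine and does not affect the argument.
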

\begin{proof}
This lemma can be proved by  mimic the proof of \cite[Lemma 5.10]{quasi-optimal}, just need to replace the Lemmas 5.2 and 5.9 used there with our Lemmas~\ref{lem:Cea} and \ref{lem:dorfler property}, respectively. We omitted the details.
\end{proof}

\begin{theorem}[Quasi-optimality]\label{thm:quasi optimality}
Assume that  $\theta \in (0, \theta_{*})$ where $\theta_{*}$ is defined in Lemma~\ref{lem:dorfler property}. Then there exist a constant $\Copt$ independent of the wave number $k$ and iteration number $n$, such that if $\condition\le\Copt$,
\eq{\label{opt} \energy{u-u_n}\lesssim (1+k^2h_n) kM(f,g) (\#\T{n}-\#\T{0})^{-\half}.}

\end{theorem}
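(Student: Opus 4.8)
The plan is to transfer the quasi-optimality of the adaptive algorithm for the elliptic problem \eqref{eq:elliptic projection} to the Helmholtz problem via the equivalence between the two error estimators, exactly in the spirit of Remark~\ref{rAFEM}. First I would establish the contraction (total error reduction) for the sequence of elliptic projections $\{\tildeu{n}\}$: combining the estimator reduction (Lemma~\ref{lem:estimator reduction}), the orthogonality (Lemma~\ref{lem:orth}), and the relation \eqref{totalerror} between $\eta_{\mT_n}$ and the total error, one obtains that the quantity $\hone{u-\tildeu{n}}^2+\gamma\tildeeta_{\mT_n}^2$ (equivalently, the total error) contracts by a fixed factor $\Lcvg^2<1$ per step, provided $\condition$ is small enough that the D\"orfler property for $\tildeu{n}$ holds (Corollary~\ref{cor:dorfler}) and $t(k,h_n)$ is small. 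This is already contained in the proof of Theorem~\ref{thm:contraction property}.

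Next I would run the standard optimality argument of \cite{quasi-optimal}. Using the contraction just established, for any $n$ and any intermediate index $j<n$ one can find a refinement on which the total error of the elliptic projection has dropped below the factor $\mu$ of Lemma~\ref{lem:dorfler property}; that lemma then shows the refined set $\mathcal{R}_j$ satisfies the D\"orfler property $\eta_{\mathcal{R}_j}(u_j)\ge\theta\eta_{\mT_j}(u_j)$ — which, by the minimal-cardinality choice in the MARK step, forces $\#\mM_j\lesssim \#\mathcal{R}_j$. Feeding this into the cardinality estimate of Lemma~\ref{lem:cardinality of Mn}, summing over $j$ using the contraction to bound the geometric series $\sum_j (\hone{u-\tildeu{j}}^2+\osc_{\mT_j}^2)^{-1}\lesssim (\hone{u-\tildeu{n}}^2+\osc_{\mT_n}^2)^{-1}$, and invoking the complexity bound $\#\mT_n-\#\mT_0\lesssim\sum_{j=0}^{n-1}\#\mM_j$ (Lemma~\ref{lem:complexity of refinement}), yields
\[
\#\mT_n-\#\mT_0\lesssim \Big(1-\tfrac{\theta^2}{\theta_*^2}\Big)^{-1}|u,f,g|_\frac12^2\big(\hone{u-\tildeu{n}}^2+\osc_{\mT_n}^2\big)^{-1},
\]
i.e. $\big(\hone{u-\tildeu{n}}^2+\osc_{\mT_n}^2\big)^{1/2}\lesssim |u,f,g|_\frac12 (\#\mT_n-\#\mT_0)^{-1/2}$.

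Finally I would translate back to the Helmholtz solution $u_n$. By \eqref{totalerror}, the left-hand side above is equivalent to $\eta_{\mT_n}$; by the upper bound \eqref{UB1}, $\energy{u-u_n}\lesssim (1+k^{\half}(kh_n)^\al+k^2h_n)\eta_{\mT_n}\lesssim(1+k^2h_n)\eta_{\mT_n}$ since $\al\in(\frac12,1]$. Combining these with Lemma~\ref{Ahalf}, which gives $(u,f,g)\in\mathbb{A}^\frac12$ and $|u,f,g|_\frac12\lesssim kM(f,g)$, produces \eqref{opt}. Throughout, the smallness condition $\condition\le\Copt$ is chosen so that all the requirements of Corollaries~\ref{cor:equivalent of eta}--\ref{cor:dorfler} and Lemmas~\ref{lem:dorfler property}--\ref{lem:cardinality of Mn} are met simultaneously (in particular $t(k,h_n)$ stays below the relevant thresholds for every $n$, which is automatic because $h_n\le h_0$).

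The main obstacle I anticipate is not any single inequality but the bookkeeping needed to make the optimality machine of \cite{quasi-optimal} run with the \emph{modified} algorithm that refines $\tildeM_n$ rather than $\mM_n$: one must check that the D\"orfler property on $\mathcal{R}_j\supset\tildeM_j\supset\mM_j$ is exactly what is needed, that the minimal-cardinality property of $\mM_j$ still controls $\#\mathcal{R}_j$ up to a constant (via $\#\tildeM_j\le 4\#\mM_j$), and that the perturbation terms $t(k,h_n)\eta_{\mT_n}$ appearing in the localized upper bound (Lemma~\ref{lem:localize upper bound}) and in Lemma~\ref{lem:dorfler property} can be absorbed uniformly in $n$. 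These are all handled by the initial-mesh smallness hypothesis, so the proof reduces, as claimed in Lemma~\ref{lem:cardinality of Mn}'s proof, to citing the corresponding steps of \cite[\S5]{quasi-optimal} with Lemmas~\ref{lem:Cea} and \ref{lem:dorfler property} in place of their elliptic counterparts.
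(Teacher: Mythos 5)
Your proposal is correct and takes essentially the same route as the paper: the contraction \eqref{contraction} for the elliptic projections, Lemmas~\ref{lem:complexity of refinement}--\ref{lem:cardinality of Mn} (summed via the geometric series) to bound $\#\T{n}-\#\T{0}$ by the inverse total error, and then the estimator equivalence together with the upper bound \eqref{UB1} and Lemma~\ref{Ahalf} to return to $\energy{u-u_n}$. The only cosmetic difference is that you phrase the final translation via \eqref{totalerror}, whereas the paper passes through $\eta_{\mT_n}\eqsim\tildeeta_{\mT_n}$ (Corollary~\ref{cor:equivalent of eta}) and the quantity $\big(\hone{u-\tildeu{n}}^2+\gamma\tildeeta_{\mT_n}^2\big)^{1/2}$; these are interchangeable under the same smallness condition on $\condition$.
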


\begin{proof} We first prove the quasi-optimalities of $\tildeu{n}$ and $\tildeeta_n$, i.e.
\eq{\label{opttu} (\hone{u-\tildeu{n}}^2+\gamma\tildeeta_n^2)^\frac{1}{2} \ls C_\ta|u,f,g|_\frac12(\#\T{n}-\#\T{0})^{-\frac12},}
where $C_\ta:=\frac{\Lcvg}{(1-\Lcvg^2)^{\frac12}}\big(1-\frac{\theta^2}{\theta_*^2}\big)^{-\frac{1}{2}}.$ Denote by
$M:=\big(1-\frac{\theta^2}{\theta_*^2}\big)^{-1}|u,f,g|^2_\frac12.$
According to Lemmas \ref{lem:complexity of refinement} and  \ref{lem:cardinality of Mn}, we have
\[
\#\T{n}-\#\T{0}\lesssim\sum_{j=0}^{n-1}\#\M{j}\lesssim M\sum_{j=0}^{n-1}\big(\hone{u-\tildeu{j}}^2+\osc^2_{\T{j}}\big)^{-1}.
\]
Using the \eqref{LB2}, under the condition $\condition \le \Clb$, we deduce  that
\begin{align*}
\hone{u-\tildeu{j}}^2+\gamma\tildeeta_j^2\le&\hone{u-\tildeu{j}}^2
                                         + \gamma\tLlb (\hone{u-\tildeu{j}}^2+\osc^2_{\T{j}})\\
                                         \le&(1+\gamma  \tLlb )(\hone{u-\tildeu{j}}^2+\osc^2_{\T{j}}).
\end{align*}
Meanwhile \eqref{contraction} in the proof of Theorem \ref{thm:contraction property} implies directly that, if $\condition\le \Ccvg$,
\[\hone{u-\tildeu{n}}^2+\gamma\tildeeta_n^2\le \Lcvg^{2(n-j)}\big(\hone{u-\tildeu{j}}^2+\gamma\tildeeta_j^2\big), \quad  j=0,1,..., n-1. \]
Thus
\begin{align*}
\big(\hone{u-\tildeu{j}}^2+\osc^2_{\T{j}}\big)^{-1}\le&(1+\gamma\tLlb)\big(\hone{u-\tildeu{j}}^2+\gamma\tildeeta_j^2\big)^{-1}\\
                     \le&(1+\gamma\tLlb)\Lcvg^{2(n-j)}\big(\hone{u-\tildeu{n}}^2+\gamma\tildeeta_n^2\big)^{-1}.
\end{align*}
This implies that
\begin{align*}
\#\T{n}-\#\T{0}\lesssim& M(1+\gamma\tLlb)\big(\hone{u-\tildeu{n}}^2+\gamma\tildeeta_n^2\big)^{-1}\sum_{j=1}^n\Lcvg^{2j}\\
                     \lesssim&\frac{\Lcvg^2}{1-\Lcvg^2}M(1+\gamma\tLlb)\big(\hone{u-\tildeu{n}}^2+\gamma\tildeeta_n^2\big)^{-1}.
\end{align*}
Therefore
\[ (\hone{u-\tildeu{n}}^2+\gamma\tildeeta_n^2)^\frac{1}{2}\lesssim \frac{\Lcvg}{\big(1-\Lcvg^2\big)^\frac12}M^\frac12(1+\gamma\tLlb)^\frac{1}{2}(\#\T{n}-\#\T{0})^{-\frac12},\]
that is, \eqref{opttu} holds.

Then \eqref{opt} follows from the equivalence $\eta_n\eqsim\tildeeta_n$ given in Corollary~\ref{cor:equivalent of eta} and the upper bound \eqref{UB1}. This completes the proof of the theorem.
\end{proof}

\begin{remark} Given $\frac12<s<\al$, under the condition that $k^{1+s}h_0^s$ is sufficiently small, the convergence of the the adaptive CIP-FEM including AFEM are proved in \cite{ACIPforhelmholtz}. Our results in Theorems~\ref{thm:contraction property} and \ref{thm:quasi optimality} for AFEM hold for larger $h_0$, in fact, in the preasymptotic regime.
\end{remark}

\section{Numerical tests}\label{s:num} In this section, we present some numerical results to verify our theoretical findings and the performance of AFEM and adaptive CIP-FEM.
\subsection{CIP-FEM}\label{ss:cipfem} We first recall the linear CIP-FEM \cite{dd76,Du2015Preasymptotic,wu2014,zw2013},  that is done
by adding some appropriate
penalty terms on the jumps of the fluxes across interior edges to the finite element system \eqref{eq:femhelm} while using the same approximation space as it.

We define the ``energy" space $V$ and the sesquilinear
form $a^\ga(\cdot,\cdot)$ on $V\times V$ as
\begin{align}
  V&:=H^1(\Om)\cap\prod_{K\in\mT_h} H^2(K), \nonumber \\
  \label{eah}
  a^\ga(u,v)&:=a(u, v)+  J(u,v)\qquad\forall\, u, v\in V,\\
  J(u,v)&:=\sum_{e\in\mE^I}\ga_e\, h_e \pd{\jump{\na u},\jump{\na v}},
          \label{eJ}
\end{align}
where  $\ga_{e}$ for $e\in\mE^I$ are called the penalty parameters,  which are
complex numbers with nonpositive imaginary parts due to $-\ci k$ in the boundary condition on $\Gamma_1$.
It is clear that $J(u,v)=0$ if $u\in H^2(\Om)$ and $v\in V$. Therefore, if $u\in H^2(\Om)$ is the solution to \eqref{eq:helm}, then
\begin{equation*}
  a^\ga(u,v) =(f,v)+\pd{g, v},\quad\forall v\in V.
\end{equation*}
This motivates the definition of the CIP-FEM: Find $u_h^\ga\in \VT$ such that

\begin{equation}\label{ecipfem}
  a^\ga(u_h^\ga,v_h) =(f, v_h)+\pd{g, v_h},  \;\forall v_h\in \VT.
\end{equation}

Compared with our earlier standard FEM \eqref{eq:femhelm}, the CIP-FEM \eqref{ecipfem} has added a bilinear form $J(u,v)$ that collects the so-called penalty terms, one from each interior edge of $\mT$. Clearly, the CIP-FEM reduces to the standard FEM \eqref{eq:femhelm} when the penalty parameters $\ga_e$ in $J(u,v)$ are turned off.

Recent theoretical results and numerical evidences show that  the penalty parameters may be tuned to reduce the pollution errors significantly (see \cite{liwu2019,zbw16,Du2015Preasymptotic,wu2014,zw2013}). By following the technical derivations in Sections~\ref{s3}--\ref{s4}, we can establish the convergence and quasi-optimality of the adaptive CIP-FEM in Theorem~\ref{thm:contraction property} and Theorem~\ref{thm:quasi optimality} also for the adaptive CIP-FEM. We omit the tedious technical details here.

\subsection{Numerical example}\label{ss:example1} We consider the Helmholtz equation \eqref{eq:helm} with  $\Omega_1 = (-0.5, 0.8)\times (-0.5, 0.5)$,  $\Omega_0$ a drop-shaped domain which merges a triangle and a circle centered at $(0.5, 0)$. The apical point stands at $(0, 0)$ with angle $\pi /15$, and the two edges are symmetric about the $x$-axis and both tangent to the circle. The source terms $f$ and $g$ are so chosen that the exact solution $u=\phi(r) J_\frac{15}{29}(kr)\sin(\frac{15}{29}(\theta-\frac{\pi}{30}))$, where $ J_\alpha(r)$ with $r=(x^2+y^2)^\half $ stands for the Bessel function of the first kind, the cutoff function $\phi(r)$ is defined below with $R=0.5 \cos(\frac{\pi}{30})$.
\eqn{
\phi(r)=
\begin{cases}
(1+\frac{2r}{R})(1-\frac{r}{R})^2,&  \ r\le R,\\
0,&       \ x>R.
\end{cases}
}
Obviously $u=0$ on $\Gamma_0.$
The penalty parameters in the CIP-FEM are chosen as $\ga_e\equiv\ga:=-\frac{\sqrt{3}}{24}-0.005 \ci$ where the real part of $\ga$ can remove leading term of the dispersion error on equilateral triangulations \cite{hw12} and the imaginary part can enhance the stability of the adaptive CIP-FEM.  The codes are written in MATLAB. We use the program ``initmesh.m" to generate the initial meshes in which most triangle elements are approximate equilateral triangles. Since the local refinements may reduce the quality of the meshes, resulting a decrease of the effect of the selected penalty parameter, we use ``jigglemesh.m" with default arguments to improve the mesh quality in each iteration of adaptive CIP-FEM. We set the initial mesh size $h_0\approx \frac{\pi}{k}$ which is about half the wavelength and is necessary for the FE interpolant (or the elliptic projection) to resolve the wave. Although the theoretical results of convergence and quasi-optimality in Theorems~\ref{thm:quasi optimality} requires that the initial mesh size satisfies that $k^3h_0^{1+\al}$ is sufficiently small, our numerical tests show that our adaptive algorithms still work for coarser initial meshes. This gap between theory and practise still needs to be filled.   Moreover, the error estimator is rescaled by a factor of 0.15 as in MATLAB PDE toolbox.

We present in Figure \ref{fig: mesh and exact solution} a sample mesh and the magnitude of the CIP-FE solution in the adaptive CIP-FEM for $k=15\pi $ after 16 local refinements. The mesh contains $79909$ triangles and the error relative to the exact solution is $2.64\%$ at this step. It can be seen that the mesh has the same wave pattern as the numerical solution, and is denser around the corner where the singularity  is located.
\par
\begin{figure}[ht]
\includegraphics[height=1.05in,width=4.5in]{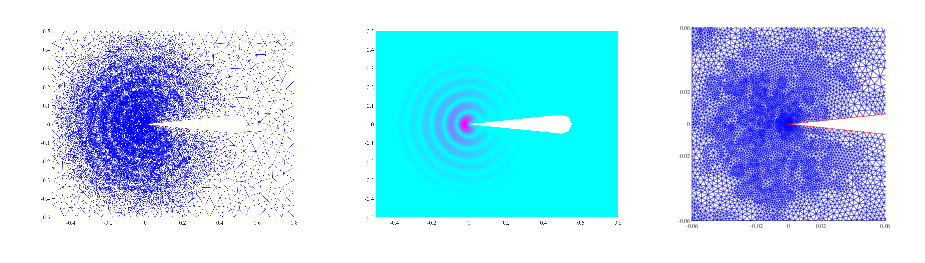}
\caption{A sample mesh (left) and the magnitude (middle) of the CIP-FE solution for $k=15\pi$ after 16 local refinements, the right figure is the local view of the mesh around $(0, 0)$. }
\label{fig: mesh and exact solution}
\end{figure}

\begin{figure}[ht]
\includegraphics[height=2in,width=5in]{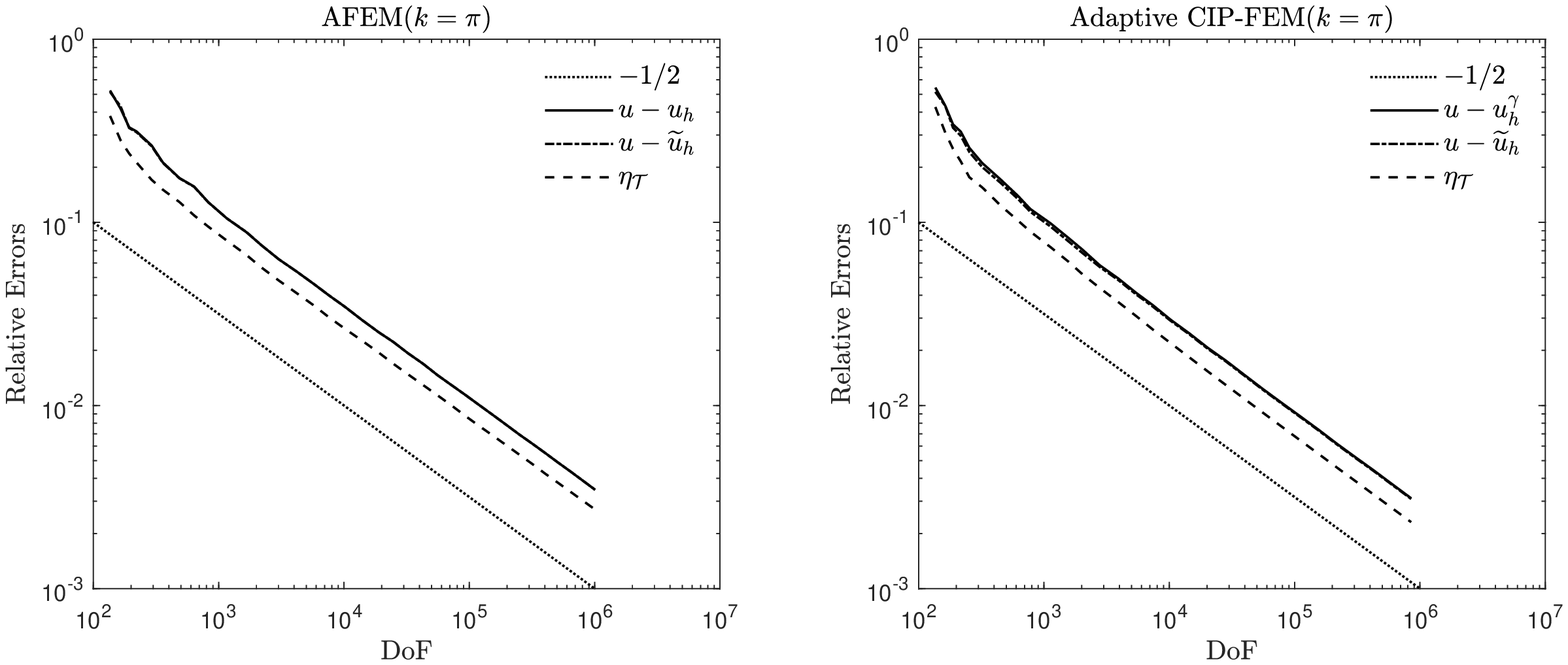}
\caption{$k=\pi$: Relative errors of the numerical solution (solid), the elliptic projection (dash-dotted), the error estimator (dashed) versus the number of degrees of freedom for AFEM (left) and adaptive CIP-FEM (right). The dotted lines give the reference slope of $-\frac12$.}
\label{fig: k_5pi}
\end{figure}

For a small wave number $k=\pi $, Figure~\ref{fig: k_5pi} plots relative errors of the numerical solution, the elliptic projection, the error estimator versus the number of degrees of freedom (DoF) in  the logarithm scale for AFEM (left) and adaptive CIP-FEM (right), respectively. For the AFEM, it is observed from Figure~\ref{fig: k_5pi}(left) that the error estimator $\eta_\mT$ is a good estimate of both the error $\energy{u-u_h}$ of the FE solution  and the error $\energy{u-\tildeu{h}}$ of the elliptic projection. All of them decay at the optimal rate of $O(N^{-\half)}$. Figure~\ref{fig: k_5pi}(right) shows similar behaviour for the adaptive CIP-FEM, except the error $\energy{u-u_h^\ga}$ of the CIP-FE solution behaves a little better than that of the FE solution at the beginning of the adaptive iterations.

\begin{figure}[ht]
\includegraphics[height=2in,width=5in]{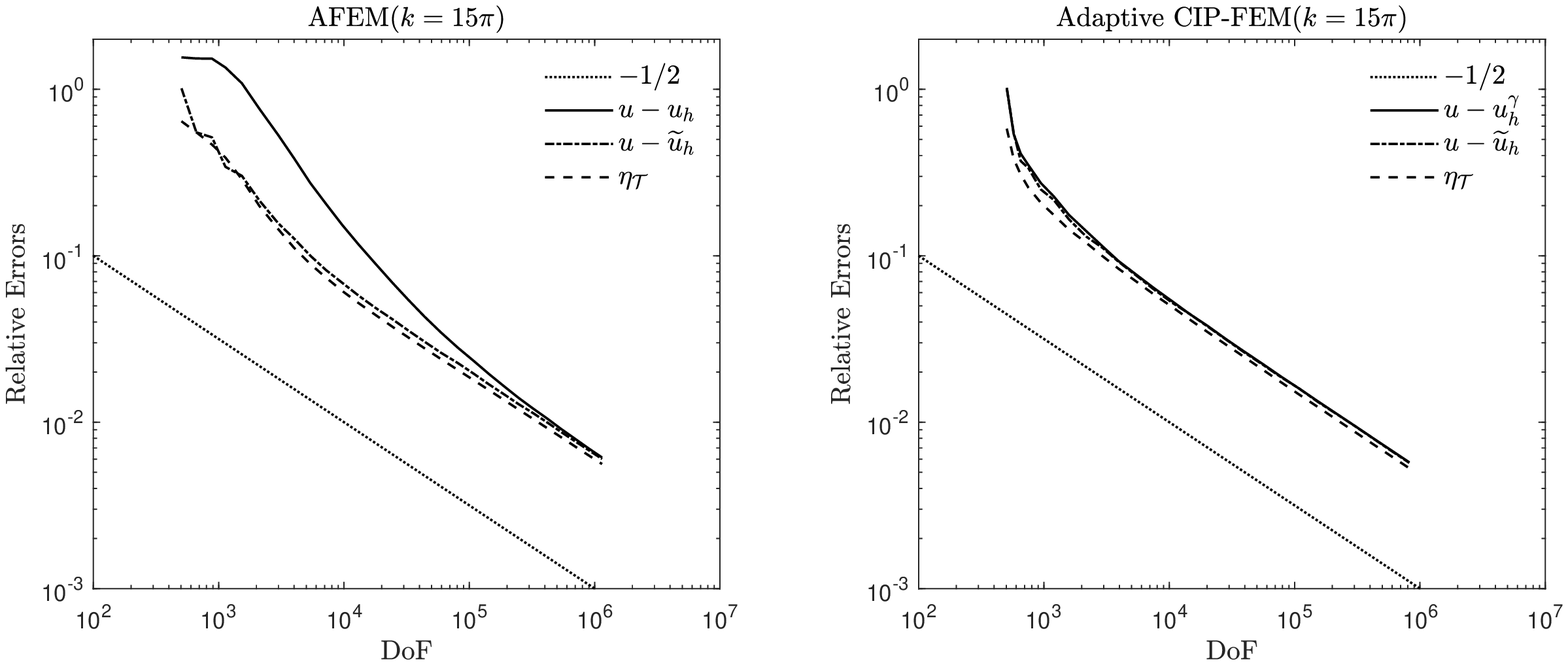}
\caption{$k=15\pi$: Relative errors of the numerical solution (solid), the elliptic projection (dash-dotted), the error estimator (dashed) versus the number of degrees of freedom for AFEM (left) and adaptive CIP-FEM (right). The dotted lines give the reference slope of $-\frac12$. }
\label{fig: k_15pi}
\end{figure}

For a medium wave number $k=15\pi$,  the corresponding numerical results are plotted in Figure~\ref{fig: k_15pi}. It is observed from Figure~\ref{fig: k_15pi}(left) for the AFEM that the error estimator $\eta_\mT$ fits the error $\energy{u-\tildeu{h}}$ of the elliptic projection well and both of them decay at the optimal rate of $O(N^{-\half })$. However, due to the pollution effect, the error $\energy{u-u_h}$ of the FE solution doesn't decrease at the beginning, but decreases rapidly after a few iterations, and then  decays at the optimal rate after the pollution disappears. The  error estimator obviously underestimates the true error of the finite element solution in the preasymptotic regime.
Figure~\ref{fig: k_15pi}(right) demonstrates that CIP-FEM can efficiently reduce the pollution error, and as a result, the error estimator does not underestimate the error of the CIP-FE solution even in the preasymptotic regime.

\begin{figure}[ht]
\includegraphics[height=2in,width=5in]{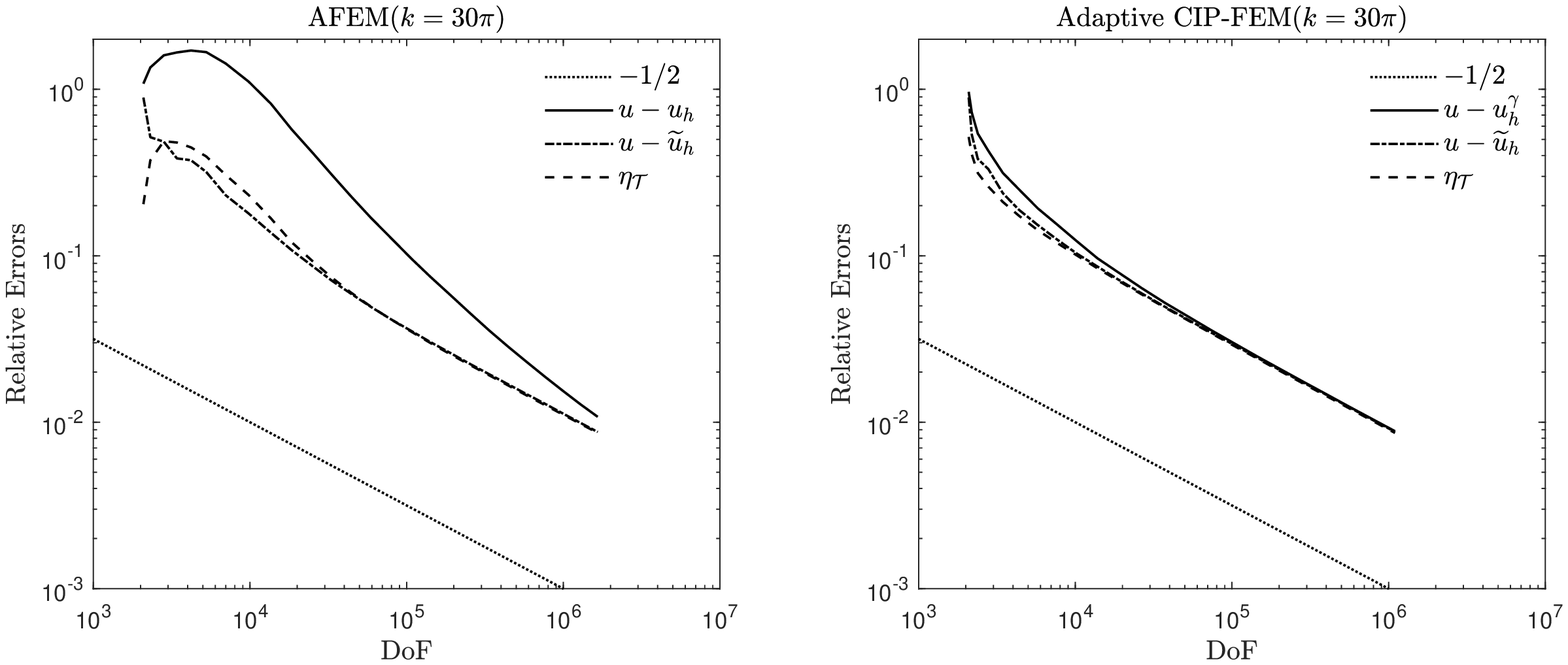}
\caption{$k=30\pi$: Relative errors of the numerical solution (solid), the elliptic projection (dash-dotted), the error estimator (dashed) versus the number of degrees of freedom for AFEM (left) and adaptive CIP-FEM (right). The dotted lines give the reference slope of $-\frac12$.}
\label{fig: k_30pi}
\end{figure}

For a larger wave number $k=30 \pi $, the corresponding numerical results are plotted in Figure~\ref{fig: k_30pi}. The pollution effect becomes worse than $k=15\pi$ and the error estimator $\eta_\mT$ underestimates the error of the FE solution more seriously. Meanwhile, the adaptive CIP-FEM can still greatly reduce the pollution error and performs well.

\begin{figure}[ht]
\includegraphics[height=2in,width=5in]{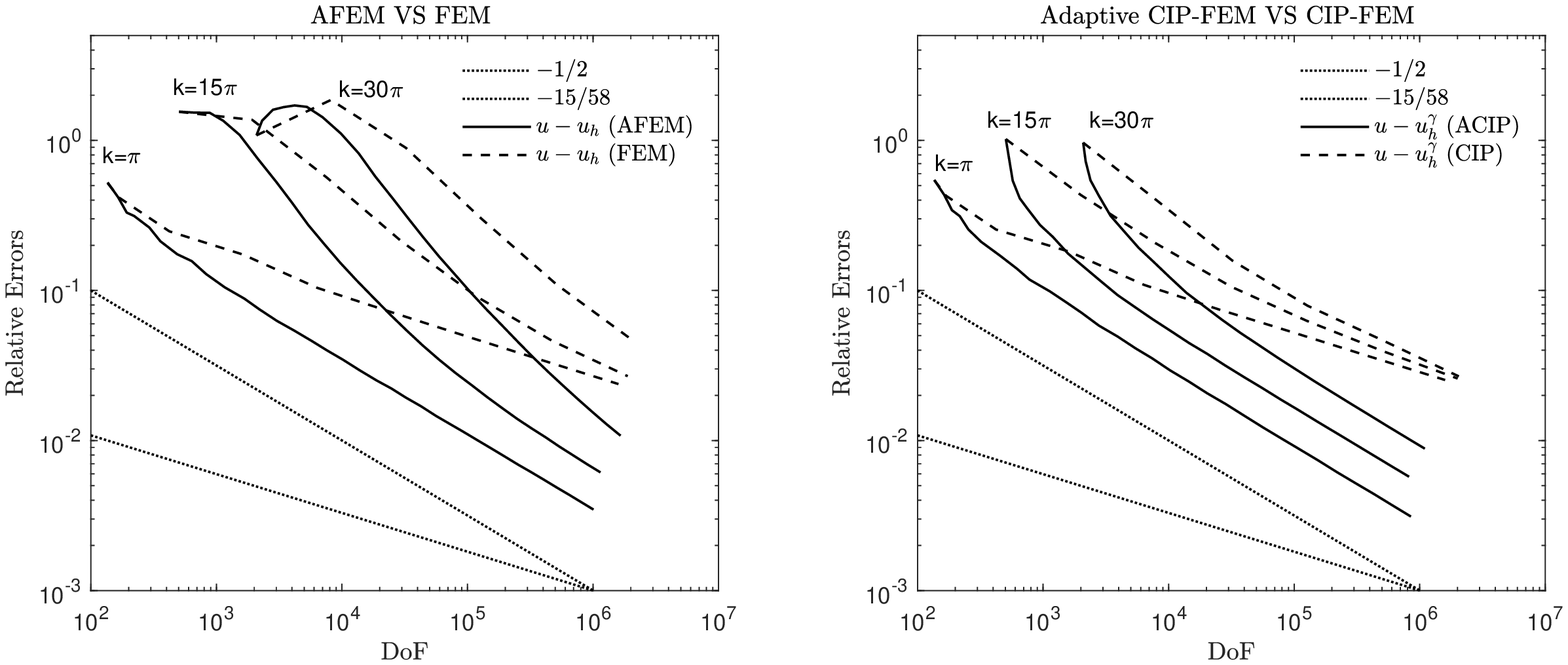}
\caption{Left: Relative errors  of AFEM (solid) and FEM (dashed) versus the number of degrees of freedom. Right: Corresponding plots for adaptive CIP-FEM and CIP-FEM. The dotted lines give the reference slopes of $-\frac12$ and $-\frac{15}{58}$, respectively.}
\label{fig: AFEM_FEM_ACIP_CIP}
\end{figure}

Finally, we give a comparison in Figure~\ref{fig: AFEM_FEM_ACIP_CIP} between two adaptive methods (i.e., AFEM and adaptive CIP-FEM) and two corresponding methods using uniform refinements (called FEM and CIP-FEM for simplicity).
For $k=\pi$, we can see from Figure~\ref{fig: AFEM_FEM_ACIP_CIP}(left) the error $\energy{u - u_h}$ of the numerical solution of AFEM decays at the optimal rate of $O(N^{-\half})$, while for FEM, after a few iterations it decays at the rate of $O(N^{-\frac{15}{58}})$, since the interpolation error due to singularity at the original begins to dominate. For $k=15\pi$ and $k=30\pi$, due to the pollution effect, the energy errors of the solutions of the AFEM and FEM  do not decrease at the beginning, but decrease rapidly after a few iterations, and then the error of AFEM decays at the the optimal rate of $O(N^{-\half})$ while the error of the FEM tends to decay at the rate of $O(N^{-\frac{15}{58}})$. Clearly, the adaptive FEM performs better than the FEM using uniform refinements. Figure~\ref{fig: AFEM_FEM_ACIP_CIP}(right) shows similar phenomena for the adaptive CIP-FEM and CIP-FEM, except for the lack of the beginning stage where the error does not decrease. Again, the adaptive CIP-FEM performs much better than the CIP-FEM using uniform refinements for this Helmholtz problem with conner singularity, even in the case of high frequency.

\providecommand{\bysame}{\leavevmode\hbox to3em{\hrulefill}\thinspace}
\providecommand{\MR}{\relax\ifhmode\unskip\space\fi MR }
\providecommand{\MRhref}[2]{%
  \href{http://www.ams.org/mathscinet-getitem?mr=#1}{#2}
}
\providecommand{\href}[2]{#2}

\end{document}